\def\b{\beta}
\def \-{\bar}
\newtheorem{theorem}{Theorem}[section]
\newtheorem{lemma}[theorem]{Lemma}
\newtheorem{corollary}[theorem]{Corollary}
\newtheorem{proposition}[theorem]{Proposition}
\newtheorem{definition}[theorem]{Definition}
\newtheorem{remark}[theorem]{Remark}
\newtheorem{claim}[theorem]{Claim}
\date{}
\begin{document}

\title{\bf Complexity of holomorphic maps from the complex unit ball to classical domains} %from
%the unit ball into the products of unit balls  %
%${\BB}^n$ into ${\BB}^{N_1}\times\cdots \times{\BB}^{N_m}$}
%up to conformal factors}

\author{Ming Xiao and Yuan Yuan\footnote{ Supported in
part by National Science Foundation grant DMS-1412384 and the seed grant program at Syracuse University}}

\vspace{3cm} \maketitle

\begin{abstract}
We study the complexity of holomorphic isometries and proper maps from the complex unit ball to type IV classical domains. We investigate on degree estimates of holomorphic isometries and holomorphic maps with minimum target dimension. We also construct a real-parameter family of mutually inequivalent holomorphic isometries from the unit ball to type IV domains.  We also provide examples of non-isometric proper holomorphic maps from the complex unit ball to classical domains.
\end{abstract}

\bigskip

\section{Introduction}

The motivation of this paper is twofold: the study of both isometries and proper maps between bounded symmetric space. 
Let $D, \Omega$ be bounded symmetric domains equipped with the Bergman metrics $\omega_{D}, \omega_{\Omega}$ respectively.  A holomorpic map $F: D \rightarrow \Omega $ is called isometric if $F^*\omega_{\Omega} = \lambda \omega_D$ for a positive constant $\lambda$. One recent attention to holomorphic isometries between bounded symmetric domains was paid by Clozel-Ullmo \cite{CU} in their study of the modular correspondence and later the holomorphic isometry problem was generalized extensively by Mok. He \cite{M5} proved a holomorphic isometry $F$ is totally geodesic when $D$ is irreducible and of rank at least 2. When $D$ is the complex unit ball of dimension at least 2 and $\Omega$ is the product of complex unit balls, $F$ is also totally geodesic \cite{YZ}. Much less is understood otherwise. When $D$ is the complex unit ball and $\Omega$ is an irreducible bounded symmetric domain of rank at least 2, a surprising non-totally geodesic phenomenon was discovered by Mok \cite{M6}. More precisely, for each irreducible $\Omega$, there exists a positive integer $n_\Omega$ such that $\mathbb{B}^{n_\Omega}$ admits a non-totally geodesic holomorphic isometry to $\Omega$ \cite{M6}. Moreover, if $\mathbb{B}^n$ admits a holomorphic isometry to $\Omega$, then $n \leq n_\Omega$ \cite{M6}. More recently, Chan-Mok characterize the image of the complex unit ball in $\Omega$ under the holomorphic isometry \cite{CM}.  
When $\Omega$ is the type IV classical domain, similar classification results  are obtained independently in \cite{CM}, \cite{UWZ}, \cite{XY}. The related problems on holomorphic isometries or holomorphic maps preserving invariant forms in Hermitian symmetric spaces are considered in \cite{C}, \cite{M2}, \cite{Ng1}, \cite{Ng2}, \cite{M5}, \cite{MN1}, \cite{MN2}, \cite{HY1}, \cite{HY2}, \cite{Ch}, \cite{Eb2} \cite{FHX}, \cite{Yu}, et al.

Proper holomorphic maps between bounded symmetric domains have also been a central subject in analysis and geometry of several complex variables. Let $F: D\rightarrow \Omega$ be a proper holomorphic map. When $D$ is of rank at least 2, many rigidity and non-rigidity results have been obtained in \cite{TH}, \cite{Ts}, \cite{Tu1}, \cite{Tu2}, \cite{M3}, \cite{KZ1}, \cite{KZ2}, \cite{Ng3}, et al. When $D=\mathbb{B}^n, \Omega=\mathbb{B}^N$, the rigidity and complexity of proper holomorphic maps remains a rather active problem in several complex variables.  Since Poincar\'e's pioneer work \cite{P}, many experts made fundamental progresses along the line (See \cite{Al}, \cite{L}, \cite{Fo}, \cite{St}, \cite{W},  \cite{Fa}, \cite{CS}, \cite{D1}, \cite{Hu1}, \cite{Eb1} and many references therein).  Roughly speaking, if the difference between $n, N$ is small, $F$ can be fully classified with additional assumptions on the boundary regularity of the map (cf. \cite{Al}, \cite{Fa},  \cite{Hu1}, \cite{HJ}, \cite{Ha}). In general, for arbitrary $n, N$, the moduli space of proper maps from $\mathbb{B}^n$ to $\mathbb{B}^N$ is rather large (cf.  \cite{CD}, \cite{DL2}).  

A gap phenomenon was then discovered in \cite{HJX} and later a gap conjecture was formulated by Huang-Ji-Yin \cite{HJY1} (see \cite{HJY2} as well). A proper holomorphic map $F: \mathbb{B}^n \rightarrow \mathbb{B}^N$ is called minimum if it cannot be reduced
to a map $(G,0)$  modulo automorphisms of $\mathbb{B}^n$ and $\mathbb{B}^N$ where $G$ has smaller target dimension.
The gap conjecture predicts precisely the intervals of $N$ where there are no minimum holomorphic proper maps from $\mathbb{B}^n$ to $\mathbb{B}^N$. They also showed that when $N$ is not in these intervals, then there is always a minimum monomial proper map from $\mathbb{B}^n$  to $\mathbb{B}^N$ \cite{HJY1}. These intervals terminate when the target dimension gets too large. (See als the work by D'Angelo and Lebl [DL1]). 
%For instance, D'Angelo and Lebl \cite{DL1} showed that when $N \geq n^2-2n+2,$ there  always  exist minimum holomorphic proper maps from $\mathbb{B}^n$ to $\mathbb{B}^N$.

The authors proved  in a recent paper \cite{XY} that when $D=\mathbb{B}^n, \Omega= D^{IV}_m$ and the codimension is small,  any proper holomorphic map is indeed an isometry with additional boundary regularity assumptions.
In this paper, we continue to study the complexity of holomorphic proper and isometric maps from the complex unit ball to an irreducible classical domain. Motivated by the gap conjecture  mentioned above, we study  holomorphic proper (resp. isometric) maps $F: \mathbb{B}^n \rightarrow D^{IV}_m$  with minimum target dimension (See Section 3 for the precise definition).  We show in Section 3 that holomorphic isometries from $\mathbb{B}^n$ to $D_m^{IV}$ are {\it not} minimum if $m \geq 2n+3$. We also illustrate this result is optimal by constructing minimum holomorphic isometries from $\mathbb{B}^n$ to $D_m^{IV}$ for each $n+2 \leq m \leq 2n+2.$ On the other hand,  we prove that there always exist minimum proper holomorphic maps from $\mathbb{B}^n$ to $D_{m}^{IV}$ for any $m \geq n+1.$ In Section 4, we prove that there exists a real-parameter family of mutually inequivalent minimum holomorphic isometries (thus proper holomorphic maps) from $\mathbb{B}^n$ to $D_m^{IV}$ if $n+2 \leq  m \leq 2n+2$.  
%One can compare this result with the irrigidity phenomenon for proper maps between balls when the codimension is large (See \cite{D1}, \cite{DL2}, etc).
Section 5 is devoted to establishing degree estimates for holomorphic isometric maps from $\mathbb{B}^n$ to $D_m^{IV}.$
In Section 6, we construct non-totally geodesic proper holomorphic maps from $\mathbb{B}^n$ to $\Omega$ where $\Omega$ is any of the  four types of classical domains. These maps can be non-isometric when $n$ is small while they become isometries when $n=n_\Omega$.
Interestingly, these examples further provide polynomial  proper holomorphic maps from $\mathbb{B}^n$ to $\Omega$ that answer a question of Mok (see also the independent work of Chan-Mok \cite{CM}).

A large part of the paper was finished in the December of 2015, before we learned of many interesting results on holomorphic isometries from $\mathbb{B}^n$ to $D^{IV}_m$ proved by Chan-Mok \cite{CM}. By combining the theorems of Chan-Mok \cite{CM} and ours \cite{XY} (see the similar result in \cite{UWZ} as well), it is clear that any holomorphic isometry $F: \mathbb{B}^n \rightarrow D^{IV}_m$ can be written in the following form $F= \varphi \circ f \circ \tau \circ i \circ \sigma$, where $\sigma \in Aut(\mathbb{B}^n), \tau \in Aut(\mathbb{B}^{m-1}), \varphi \in Aut(D^{IV}_{m})$, $i$ is the standard linear embedding from $\mathbb{B}^n$ to  $\mathbb{B}^{m-1}$ and $f$ is either $R^{IV}_{m-1}$ or $I^{IV}_{m-1},$ which are defined in Section 6.4. The study on minimality and inequivalent families of holomorphic isometries from the complex unit ball to the type IV classical domains in Section 3 and Section 4  is motivated by the analogue study for proper holomorphic maps between balls.

\bigskip

{\bf Acknowledgement}: The authors are grateful to Professor J. D'Angelo,  Professor X. Huang,  Professor N. Mok, Dr. S. T. Chan and Dr. S. Ng for helpful discussions.

%%%%%%%%%%%%%%%%%%%%%%%%%%%%%%%%%%%%%%%%%%%%%%%%%%%%%%%
\section{Preliminaries}
Irreducible bounded symmetric domains are realized as Cartan's four types of domains and two exceptional cases (cf. \cite{H2} \cite{M1}). Assume $q \geq p$ and let $M(p, q; \mathbb{C})$ denote the space of $p \times q$ matrices with entries of complex numbers. The type I domain is defined as

$$D^I_{p, q} = \{Z \in M(p, q; \mathbb{C}) : I_p -  Z \overline{Z}^t>0 \}.$$ In particular, when $p=1$, the type I domain is the complex unit ball $\mathbb{B}^q = \{z=(z_1, \cdots, z_q) \in \mathbb{C}^q : |z|^2 <1\}$ in $\mathbb{C}^q$. The type II and type III
domains are submanifolds of $D^I_{n, n}$ defined as

$$D^{II}_n = \{Z \in D^I_{n, n} : Z = - Z^t\}$$
and $$D^{III}_n = \{Z \in D^I_{n, n} : Z = Z^t\}.$$
The type IV domain is defined as $$D^{IV}_n = \{Z =(z_1, \cdots, z_n) \in \mathbb{C}^n : Z \overline{Z}^t <2 ~\text{and} ~ 1- Z \overline{Z}^t + \frac{1}{4} |Z Z^t|^2 >0 \}.$$

Let $\Omega$ be an irreducible classical domain.
The Bergman kernel function $K_\Omega(Z, \bar Z)$ is explicitly given by
\begin{equation}
\begin{split}
K_\Omega(Z, \bar Z) &= c_I \left( \det(I_p - Z\overline{Z}^t ) \right)^{-(p+q)} ~\text{when}~\Omega = D^I_{p, q};  \\
K_\Omega(Z, \bar Z) &= c_{II} \left( \det(I_n - Z\overline{Z}^t ) \right)^{-(n-1)} ~\text{when}~\Omega = D^{II}_n; \\
K_\Omega(Z, \bar Z) &= c_{III} \left( \det(I_n -  Z\overline{Z}^t) \right)^{-(n+1)} ~\text{when}~\Omega = D^{III}_n; \\
K_\Omega(Z, \bar Z) &= c_{IV} \left( 1 -  Z\overline{Z}^t +\frac{1}{4} |Z Z^t|^2 \right)^{-n} ~\text{when}~\Omega = D^{IV}_n ,\\
\end{split}
\end{equation}
where $c_*$ are positive constants depending on $n$ and the type of $\Omega$ (cf. \cite{H2} \cite{M1}). The Bergman metric %(up to a positive constant)
$$\omega_\Omega(Z) :=\sqrt{-1} \partial \bar\partial \log K_\Omega(Z, \bar Z)$$ on $\Omega$ is K\"ahler-Einstein as the Bergman kernel function is invariant under holomorphic automorphisms.
%
%The Bergman metrics on classical domains are  and thus are , that are of particular interest from the geometric viewpoint. We can write down the K\"ahler potentials of these metrics %explicitly in terms of matrices. Denote the Bergman metrics of Type I, II, III and IV classical domains by $\omega^{I}_{p, q}, \omega^{II}_{n}, \omega^{III}_{n}$ and $\omega^{IV}_{n}$ %respectively and up to positive constants
%
%$$\omega^{I}_{p, q}(Z) ~(\text{or}~ \omega^{II}_{n}(Z) ~\text{or}~ \omega^{III}_{n}(Z) ) = -\partial \bar\partial \log\det(I_q - \overline{Z}^t Z)$$ and $$\omega^{IV}_{n}(Z) = -\partial \bar\partial \log (1 -  Z\overline{Z}^t +\frac{1}{4} |Z Z^t|^2).$$
Note that the standard linear embedding $L(Z) =Z$ from $D^{II}_{n}$ or $D^{III}_{n}$ into $D^{I}_{n, n}$ is a totally geodesically holomorphic isometric embedding with respect to Bergman metrics with isometric constant $\frac{2n}{n-1}$ or $\frac{2n}{n+1}$ respectively.

Let $S$ be the Hermitian symmetric space of compact type dual to  $\Omega$ and $\delta \in H^2(S, \mathbb{Z})$ be the positive generator. It is well-known that the first Chern class $c_1(S) = (p+q) \delta, 2(n-1) \delta, (n+1)\delta$ or $n \delta$, When $\Omega= D^{I}_{p,q}, D^{II}_n, D^{III}_{n}, D^{IV}_n$ respectively.
Therefore, it follows from Mok's theorem in \cite{M6} that $n_\Omega= p+q-1, 2n-3, n$ or $n-1$ when the classical domain $\Omega  = D^I_{p, q}, D^{II}_{n}, D^{III}_{n}$ or $D^{IV}_n$ respectively.

We now describe the holomorphic automorphism group action on $D^{IV}_m$ in terms of the Borel embedding (cf. \cite{H1} \cite{M1}).
The hyperquadric $\mathbb{Q}^m $, the compact dual of $D^{IV}_n$ is defined by $\mathbb{Q}^m := \{[z_1, \cdots, z_{m+2}] \in \mathbb{P}^{m+1} | z_1^2+ \cdots + z_m^2 = z^2_{m+1} + z_{m+2}^2 \}$.
The Borel embedding $D^{IV}_m \subset \mathbb{Q}^m \subset \mathbb{P}^{m+1}$ is given by $$Z= (z_1, \cdots, z_m) \rightarrow \left[z_1, \cdots, z_m, \frac{1+ \frac{1}{2}Z Z^t}{\sqrt{2}}, \frac{1- \frac{1}{2}Z Z^t}{\sqrt{-2}}\right].$$
The holomorphic automorphism group of $D^{IV}_m$ is given by %Aut$(D^{IV}_n) \cong SO(n, 2, \mathbb{R})$, and
\begin{equation}\notag
{\rm Aut}(D^{IV}_{m}) = \left\{
\begin{bmatrix}
A & B\\
C & D\\
\end{bmatrix}  \in O(m, 2, \mathbb{R}) |  {\rm det}(D)>0
\begin{comment}
T  \begin{bmatrix}
I_n & 0\\
0 & -I_2\\
\end{bmatrix} T^t = \begin{bmatrix}
I_n & 0\\
0 & -I_2\\
\end{bmatrix}~\rm{and}~\rm{det}(T)=1
\end{comment}
\right\},
\end{equation}
where $A\in M(m, m, \mathbb{R}), B \in M(m, 2, \mathbb{R}),
 C \in M(2, m, \mathbb{R}), D \in M(2, 2, \mathbb{R})$.
The automorphism group action is given in the following explicit way. Let $Z = (z_1, \cdots, z_m) \in D^{IV}_m$ and  $T =\begin{bmatrix}
A & B\\
C & D\\
\end{bmatrix} \in {\rm Aut}(D^{IV}_{n}).$ Write $Z'=\left(\frac{1+ \frac{1}{2}Z Z^t}{\sqrt{2}}, \frac{1- \frac{1}{2}Z Z^t}{\sqrt{-2}}\right)$. Then the action of $T$ on $D^{IV}_m$ is  given by  $$T (Z) = \frac{Z A +  Z' C}{\left( Z B + Z' D \right) \left(1/\sqrt{2}, \sqrt{-1/2}\right)^t}.$$
Rephrasing in homogenous coordinates, if the holomorphic automorphism maps $Z=(z_1, \cdots, z_m) \in D^{IV}_m$ to $W=(w_1, \cdots, w_m) \in D^{IV}_m$, then there exists $T \in {\rm Aut}(D^{IV}_{m})$ such that %either
$$\left[w_1, \cdots, w_m, \frac{1+\frac{1}{2}WW^t}{\sqrt{2}},  \frac{1-\frac{1}{2}WW^t}{\sqrt{-2}}\right] = \left[ z_1, \cdots, z_m,  \frac{1+\frac{1}{2}ZZ^t}{\sqrt{2}},  \frac{1-\frac{1}{2}ZZ^t}{\sqrt{-2}}\right] \cdot T.$$
%or
%$$\left[w_1, \cdots, w_n, \frac{1+\frac{1}{2}WW^t}{\sqrt{2}},  -\frac{1-\frac{1}{2}WW^t}{\sqrt{-2}}\right] = \left[ z_1, \cdots, z_n,  \frac{1+\frac{1}{2}ZZ^t}{\sqrt{2}},  \frac{1-\frac{1}{2}ZZ^t}{\sqrt{-2}}\right] \cdot T.$$
 In other words, there exists nonzero $\lambda \in \mathbb{C}$, such that %either
 $$\left(w_1, \cdots, w_m, \frac{1+\frac{1}{2}WW^t}{\sqrt{2}},  \frac{1-\frac{1}{2}WW^t}{\sqrt{-2}} \right) = \lambda \left(z_1, \cdots, z_m,  \frac{1+\frac{1}{2}ZZ^t}{\sqrt{2}},  \frac{1-\frac{1}{2}ZZ^t}{\sqrt{-2}}\right)\cdot T.$$ %or
% $$\left(w_1, \cdots, w_n, \frac{1+\frac{1}{2}WW^t}{\sqrt{2}},  -\frac{1-\frac{1}{2}WW^t}{\sqrt{-2}} \right) = \lambda \left(z_1, \cdots, z_n,  \frac{1+\frac{1}{2}ZZ^t}{\sqrt{2}},  \frac{1-\frac{1}{2}ZZ^t}{\sqrt{-2}}\right)\cdot T.$$
Note that the isotropy group $K_0$ at the
origin is $K_0 = \left\{\begin{bmatrix}
A & 0\\
0 & D\\
\end{bmatrix} \in O(m, 2, \mathbb{R}) | {\rm det}(D)=1 \right\} \cong O(m, \mathbb{R}) \times SO(2, \mathbb{R}).$
\section{Holomorphic maps from $\mathbb{B}^n$ to $D^{IV}_m$}
 In this section, we study the holomorphic maps from $\mathbb{B}^n$ to $D^{IV}_m$ with 
minimum dimension in the target. We say two holomorphic maps $F_1, F_2: \mathbb{B}^n \rightarrow D_{m}^{IV}$ are equivalent if $F_1=\phi \circ F_2 \circ \psi,$ where $\psi, \phi$ are automorphisms of $\mathbb{B}^n$ and $D_{m}^{IV}$, respectively.
A holomorphic map $F: \mathbb{B}^n \rightarrow D^{IV}_m$  is called  minimum if there is no holomorphic map $G: \mathbb{B}^n \rightarrow D^{IV}_{m-1}$ such that $F$ is equivalent to $(G, 0)$.

\bigskip

\subsection{Minimum of holomorphic isometries to Type IV domains}
We study in this subsection the minimum holomorphic isometric maps from $\mathbb{B}^n$ to $D_{m}^{IV}.$ We first note that it follows from Mok's theorem \cite{M6} that any holomorphic isometry $F: \mathbb{B}^n \rightarrow D^{IV}_{n+1}$ is minimum. We prove the following theorem that there are no minimum holomorphic isometric maps from $\mathbb{B}^n$ to $D_m^{IV}$ if $m > 2n+2.$

\begin{theorem}
Let $m > 2n+2, n \geq 2$. Let $F: \mathbb{B}^n \rightarrow D^{IV}_m$ be a holomorphic isometry. Then there exists a holomorphic isometry $G: \mathbb{B}^n \rightarrow D^{IV}_{2n+2}$ such that $F$ is equivalent to $(G, \bf 0)$, where $\bf 0$ is a $(m-2n-2)$-dimensional zero row vector.
\end{theorem}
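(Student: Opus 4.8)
The plan is to reduce the statement to a single Hermitian identity coming from the isometry condition and then to a dimension count. First I would compose $F$ with automorphisms of $\mathbb{B}^n$ and of $D^{IV}_m$ to arrange $F(0)=0$; this does not change the equivalence class of $F$. Normalizing the Bergman metrics as in \cite{M6} (so that minimal disks have constant Gaussian curvature $-2$) and writing $F=(f_1,\dots,f_m)$, the isometry condition becomes the functional equation
$$1-|z|^2 = 1 - F(z)\overline{F(z)}^t + \tfrac14\bigl|F(z)F(z)^t\bigr|^2,$$
as recorded in \cite{XY}. Setting $g:=FF^t=\sum_{j=1}^m f_j^2$, this is equivalent to the balanced sum-of-squares identity
$$\sum_{j=1}^m |f_j(z)|^2 = \sum_{i=1}^n |z_i|^2 + \tfrac14\,|g(z)|^2 \qquad (z\in\mathbb{B}^n).$$

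The heart of the argument is the standard linear-algebra lemma for matching Hermitian sums of squares (cf.\ \cite{D1}): if two finite families of holomorphic germs have equal squared norms, then, after polarization, they span the same subspace of holomorphic functions. Applying this to the identity above yields
$$\mathrm{span}_{\mathbb{C}}\{f_1,\dots,f_m\} = \mathrm{span}_{\mathbb{C}}\bigl\{z_1,\dots,z_n,\tfrac12 g\bigr\}.$$
The right-hand side is spanned by $n+1$ functions, so $r:=\dim_{\mathbb{C}}\mathrm{span}_{\mathbb{C}}\{f_1,\dots,f_m\}\le n+1$. Equivalently, there are $m-r$ independent $\mathbb{C}$-linear relations among the components of $F$, so the image $F(\mathbb{B}^n)$ lies in a complex-linear subspace $V\subseteq\mathbb{C}^m$ with $\dim_{\mathbb{C}}V = r \le n+1$. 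The single extra generator $\tfrac12 g$ beyond $z_1,\dots,z_n$ is exactly what makes the count $n+1$ rather than $n$.

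The final, and subtlest, step is to convert this bound on the complex span into a reduction achievable by the isotropy action, which acts by real orthogonal transformations $Z\mapsto ZA$, $A\in O(m,\mathbb{R})$ (see the description of $K_0$ in Section 2). The correct invariant here is not $\dim_{\mathbb{C}}V$ but $\dim_{\mathbb{C}}(V+\overline{V})\le 2r\le 2n+2$: since $W:=V+\overline{V}$ is invariant under conjugation, it is the complexification of the real subspace $W\cap\mathbb{R}^m$, whose real dimension equals $\dim_{\mathbb{C}}W\le 2n+2$. Choosing $A\in O(m,\mathbb{R})$ that carries $W\cap\mathbb{R}^m$ into $\mathbb{R}^{2n+2}\times\{0\}$ produces an automorphism of $D^{IV}_m$ sending $F(\mathbb{B}^n)$ into $\mathbb{C}^{2n+2}\times\{0\}$, hence into $D^{IV}_{2n+2}\times\{0\}$, because the defining inequalities of $D^{IV}_m$ restrict to those of $D^{IV}_{2n+2}$ on this coordinate subspace. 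This exhibits $F$ as equivalent to $(G,\mathbf{0})$, and $G:\mathbb{B}^n\to D^{IV}_{2n+2}$ is again a holomorphic isometry since the generic norm of $D^{IV}_m$ restricts to that of $D^{IV}_{2n+2}$, so the embedding $D^{IV}_{2n+2}\hookrightarrow D^{IV}_m$ is isometric up to a constant. I expect the main obstacle to be this last passage: one must recognize that the relevant quantity is $\dim_{\mathbb{C}}(V+\overline{V})$ rather than $\dim_{\mathbb{C}}V$, and that the real isotropy group $O(m,\mathbb{R})$ suffices precisely because $V+\overline{V}$ is defined over $\mathbb{R}$.
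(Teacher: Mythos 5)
Your proof is correct and follows essentially the same route as the paper's: normalize $F(0)=0$, invoke \cite{XY} to turn the isometry condition into the identity $\sum_{j=1}^m|f_j|^2=\sum_{i=1}^n|z_i|^2+\frac{1}{4}|FF^t|^2$, apply D'Angelo's lemma, and exploit the real orthogonal isotropy of $D^{IV}_m$ together with the count $2(n+1)<m$. The only difference is organizational: the paper finds a single real linear relation among the $f_j$ (from the rank bound $2(n+1)<m$) and kills one coordinate at a time, whereas you carry out the same reduction in one step via the conjugation-stable subspace $V+\overline{V}$ of dimension at most $2n+2$.
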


\begin{proof}
It suffices to show that for any such $F$ and $m$, there exists a holomorphic isometry $\hat F: \mathbb{B}^n \rightarrow D^{IV}_{m-1}$ such that $F$ is equivalent to $(\hat F, \bf 0)$. By the isometric assumption, we have 
\begin{equation}\label{eqnismi}
F^{*}(\omega_{D_m^{IV}})=\lambda \omega_{\mathbb{B}^n},
\end{equation}
for some positive constant $\lambda.$
From Proposition 2.11 in \cite{XY}, we know that $\lambda$ must be $m/(n+1).$ Write $Z=(z_1,...,z_n)$ as the coordinates in $\mathbb{C}^n.$
Write $F=(F_1, \cdots, F_m)$. By composing with the autmorphism of $D_m^{IV}$ if necessary, we may assume $F(0)=0.$  By standard reduction, we obtain from (\ref{eqnismi}),
$$1-F\overline{F}^t+\frac{1}{4}|FF^t|^2=1-Z\overline{Z}^t.$$
By a lemma of D'Angelo (\cite{D2}), we have
$$(F_1, \cdots, F_m)= (z_1, \cdots, z_n, \frac{1}{2} \sum_{i=1}^m F^2_i(z), 0, \cdots, 0) \cdot \bf V ,$$
where $\bf V$ is an $m \times m$ unitary matrix. Write $$V =\begin{bmatrix} \bf v_1 \\ \cdots \\ \bf v_m \end{bmatrix},$$ where $\bf v_i$ is a $m$-dimensional row vector for $1 \leq i \leq m$. Then we have
$$\left(z_1, \cdots, z_n, \frac{1}{2} \sum_{i=1}^m F^2_i(z)\right) \cdot \begin{bmatrix} \bf v_1 \\ \cdots \\ \bf v_{n+1} \end{bmatrix} = (F_1, \cdots, F_m).$$

{\bf Claim:} $\{F_1, \cdots, F_m\}$ is a linearly dependent set over real number field. In other words,
there exist $\lambda_1, \cdots, \lambda_m \in \mathbb{R}$ not mutually zero, such that $\sum_{i=1}^m \lambda_i F_i  \equiv 0$.

\medskip

{\bf Proof of Claim:} Write ${\bf v_i} ={\bf a_i} + \sqrt{-1} {\bf b_i}$ for $1 \leq i \leq n+1$. It is easy to see that there exists ${\bf v} = (\lambda_1, \cdots, \lambda_m)^t \in \mathbb{R}^m$ with ${\bf v} \not= 0$ such that
\begin{equation}\label{rank}
\begin{bmatrix}
\bf a_1 \\ \bf b_1 \\ \cdots \\ \bf a_{n+1} \\ \bf b_{n+1}
\end{bmatrix} {\bf v} = \bf 0.
\end{equation}
This is because $$\text{rank} \begin{bmatrix}
\bf a_1 \\ \bf b_1 \\ \cdots \\ \bf a_{n+1} \\ \bf b_{n+1}
\end{bmatrix} \leq 2(n+1) < m.$$
Then (\ref{rank}) implies $$\left(z_1, \cdots, z_n, \frac{1}{2} \sum_{i=1}^m F^2_i(z)\right) \cdot \begin{bmatrix} \bf v_1 \\ \cdots \\ \bf v_{n+1} \end{bmatrix} \cdot {\bf v} = \bf 0.$$
This proves the claim by showing $\sum_{i=1}^m \lambda_i F_i  \equiv 0$.

\medskip

By rescaling ${\bf v}$ if necessary, we assume $|{\bf v}| =1$. Extend ${\bf v}$ to an orthonormal basis $\{\bf u_1, \cdots, u_{m-1}, {\bf v} \}$ of $\mathbb{R}^m$ and write $m\times m$ matrix $\bf C = (\bf u_1, \cdots, u_{m-1}, {\bf v})$. Define $\hat F = (\hat F_1, \cdots, \hat F_m) = F \cdot \bf C$ and then $\hat F$ is equivalent to $F$. This completes the proof of the theorem because $\hat F_m = F \cdot {\bf v} =0$.
\end{proof}

\medskip

Define $R_{n+2}, I_{n+2}: \mathbb{B}^n \rightarrow D^{IV}_{n+2}$ to be
$$ R_{n+2}(z)
= \bigg( \cos{\theta_1} z_1, \sqrt{-1}\sin{\theta_1} z_1, z_2, \cdots, z_{n-1}, $$
$$ \frac{\cos(2{\theta_1}) z_1^2 + \sum_{j=2}^{n-1} z_j^2 -2 z_n^2 +2 z_n}{2\sqrt{2}(1-z_n)}, \frac{\cos(2{\theta_1}) z_1^2 + \sum_{j=2}^{n-1} z_j^2 +2 z_n^2 -2 z_n}{2\sqrt{-2}(1-z_n)} \bigg), $$
$$ I_{n+2}(z)
= \left(\cos{\theta_1} z_1, \sqrt{-1}\sin{\theta_1} z_1, z_2, \cdots, z_{n}, 1-\sqrt{1-\cos(2{\theta_1}) z^2_1 - \sum_{j=2}^{n} z_j^2} \right) $$
with $\theta \in (0, \pi/4]$;
and one can similarly define $R_{n+k}, I_{n+k}: \mathbb{B}^n \rightarrow D_{n+k}^{IV}$ for $2 \leq k \leq n:$
$$ R_{n+k}(z)
= \bigg( \cos{\theta_1} z_1, \sqrt{-1}\sin{\theta_1} z_1, \cdots, \cos{\theta_{k-1}}z_{k-1}, \sqrt{-1}\sin{\theta_{k-1}}z_{k-1},$$
$$z_k, \cdots, z_{n-1}, \frac{\sum_{j=1}^{k-1}\cos(2{\theta_j}) z_j^2+\sum_{j=k}^{n-1} z_j^2  -2 z_n^2 +2 z_n}{2\sqrt{2}(1-z_n)}, \frac{\sum_{j=1}^{k-1}\cos(2{\theta_j}) z_j^2+\sum_{j=k}^{n-1} z_j^2 +2 z_n^2 -2 z_n}{2\sqrt{-2}(1-z_n)} \bigg) $$
 $$I_{n+k}(z)= \bigg( \cos{\theta_1} z_1, \sqrt{-1}\sin{\theta_1} z_1, \cdots, \cos{\theta_{k-1}}z_{k-1}, \sqrt{-1}\sin{\theta_{k-1}}z_{k-1}, $$
$$z_k, \cdots, z_n, 1-\sqrt{1-\sum_{j=1}^{k-1}\cos(2{\theta_j}) z^2_j - \sum_{j=k}^ n z_j^2}  \bigg)$$
with $\theta_j \in (0, \pi/4]$ for $1 \leq j \leq k-1$. Here when $k=n,$ the components $``z_k, \cdots ,z_{n-1}"$ in $R_{n+k}$ is understood to be void. Furthermore, $R_{2n+1}, I_{2n+1}: \mathbb{B}^n \rightarrow D^{IV}_{2n+1}$ are given by
$$R_{2n+1}(z)= \bigg( \cos{\theta_1} z_1, \sqrt{-1}\sin{\theta_1} z_1, \cdots, \cos{\theta_{n-1}}z_{n-1}, \sqrt{-1}\sin{\theta_{n-1}}z_{n-1}, z_n, $$
$$\frac{1}{2\sqrt{2}} \left( \sum_{j=1}^{n-1} \cos(2\theta_j) z_j^2 + z^2_n \right), \frac{-\sqrt{-1}}{2\sqrt{2}}\left( \sum_{j=1}^{n-1} \cos(2\theta_j) z_j^2 + z^2_n \right) \bigg)$$
for $\theta_1, \cdots, \theta_{n-1} \in (0, \pi/4]$,
$$I_{2n+1}(z)= \left( \cos{\theta_1} z_1, \sqrt{-1}\sin{\theta_1} z_1, \cdots, \cos{\theta_{n}}z_{n}, \sqrt{-1}\sin{\theta_{n}}z_{n}, 1-\sqrt{1-\sum_{j=1}^{n}\cos(2{\theta_j}) z^2_j} \right)$$ for $\theta_1, \cdots, \theta_{n} \in (0, \pi/4]$ but not all $\theta_j = \pi/4$; and $R_{2n+2}, I_{2n+2}: \mathbb{B}^n \rightarrow D^{IV}_{2n+2}$ are given by
$$R_{2n+2}(z) = \bigg( \cos{\theta_1} z_1, \sqrt{-1}\sin{\theta_1} z_1, \cdots, \cos{\theta_{n}}z_{n}, \sqrt{-1}\sin{\theta_{n}}z_{n}, $$
$$\frac{1}{2\sqrt{2}}  \sum_{j=1}^{n} \cos(2\theta_j) z_j^2  , \frac{-\sqrt{-1}}{2\sqrt{2}} \sum_{j=1}^{n} \cos(2\theta_j) z_j^2  \bigg),$$
$$I_{2n+2}(z) = \bigg( \cos{\theta_1} z_1, \sqrt{-1}\sin{\theta_1} z_1, \cdots, \cos{\theta_{n}}z_{n}, \sqrt{-1}\sin{\theta_{n}}z_{n}, $$
$$\frac{\cos\theta}{\cos(2\theta)} \left(1 -  \sqrt{1-\cos(2\theta) \left(\sum_{j=1}^n \cos(2\theta_j)z_j^2\right)}\right), \frac{\sqrt{-1}\sin\theta}{\cos(2\theta)} \left(1 -  \sqrt{1-\cos(2\theta) \left(\sum_{j=1}^n \cos(2\theta_j)z_j^2\right)}\right) \bigg)$$ for $\theta _j \in (0, \pi/4]$ but not all $\theta_j = \pi/4$ for $1 \leq  j \leq n$ and $\theta \in (0, \pi/4)$.

\begin{theorem}\label{thmghminimal}
For each $2 \leq k \leq n+2$, $R_{n+k}, I_{n+k}: \mathbb{B}^n \rightarrow D^{IV}_{n+k}$ are minimum holomorphic isometries.
\end{theorem}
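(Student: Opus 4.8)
The proof naturally splits into two independent parts: checking that each map is a holomorphic isometry, and checking minimality.

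For the isometry part, every one of these maps fixes the origin, so by the normalization used in the proof of the first theorem of this subsection it suffices to verify the functional equation $1-F\overline{F}^t+\tfrac14|FF^t|^2 = 1-Z\overline{Z}^t$. This reduces to a short computation. For the $R$-type maps, using $\cos^2\theta_j-\sin^2\theta_j=\cos 2\theta_j$ one finds $FF^t=\frac{P}{1-z_n}$ with $P=\sum_{j=1}^{k-1}\cos(2\theta_j)z_j^2+\sum_{j=k}^{n-1}z_j^2$; the two rational components $A,B$ satisfy $A^2+B^2=\frac{Pz_n}{1-z_n}$, and then $F\overline{F}^t-\tfrac14|FF^t|^2=\sum_{j=1}^{n-1}|z_j|^2+|z_n|^2=|z|^2$, where the $|z_n|^2$ comes precisely from $|A|^2+|B|^2-\tfrac14|FF^t|^2$. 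For the $I$-type maps the last component $W$ satisfies $FF^t=2W$, so $\tfrac14|FF^t|^2=|W|^2$ exactly cancels the $|W|^2$ appearing in $F\overline{F}^t$. The special maps $R_{2n+1},R_{2n+2},I_{2n+2}$ are identical in spirit, with $FF^t$ equal to $S$, $S$, and $\frac{2R}{\cos 2\theta}$ respectively (notation as in their definitions).

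For minimality I first set up a criterion. Via the Borel embedding, write the homogeneous coordinates of $F$ as $\widehat F=\bigl(F_1,\dots,F_m,\frac{1+\frac12 FF^t}{\sqrt2},\frac{1-\frac12 FF^t}{\sqrt{-2}}\bigr)$. The key reduction is the implication: if $F$ is equivalent to $(G,\mathbf 0)$ for some $G:\mathbb{B}^n\to D^{IV}_{m-1}$, then the $m+2$ functions $\widehat F_1,\dots,\widehat F_{m+2}$ satisfy a nontrivial \emph{real} linear relation $\sum_i c_i\widehat F_i\equiv 0$ with $c_i\in\mathbb{R}$. Indeed, the image of $\widehat{(G,\mathbf 0)}$ lies in the hyperplane $\{Z_m=0\}$; a reparametrization by $\psi\in\mathrm{Aut}(\mathbb{B}^n)$ does not change the image set, and every $\varphi\in\mathrm{Aut}(D^{IV}_m)$ acts on $\mathbb{P}^{m+1}$ by a real matrix in $O(m,2,\mathbb{R})$ (this is exactly the explicit action recalled in Section 2). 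Hence $\widehat F(\mathbb{B}^n)$ lies in the image of $\{Z_m=0\}$ under a real projective transformation, i.e.\ in a hyperplane cut out by a real linear form, which is the asserted relation. Consequently, to prove each map is minimum it suffices to show the only real relation among $\widehat F_1,\dots,\widehat F_{m+2}$ is the trivial one.

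The relation analysis is then a finite coefficient comparison. For the $R$-type one clears the denominator $1-z_n$; for the $I$-type one uses that $W=1-\sqrt{1-Q}$ with $Q\not\equiv 0$ is not a rational function, so the rational and irrational parts of the relation must vanish separately. The decisive mechanism in every case is that the paired components $\cos\theta_j z_j$ and $\sqrt{-1}\sin\theta_j z_j$ are $\mathbb{R}$-linearly independent: a real identity $c'\cos\theta_j+c''\sqrt{-1}\sin\theta_j=0$ forces $c'=c''=0$ because $\cos\theta_j$ and $\sin\theta_j$ are both nonzero reals, which is exactly where $\theta_j\in(0,\pi/4]$ enters. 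Matching the $z_jz_n$ cross terms (or the linear part for the $I$-type) first annihilates all the ``linear'' coefficients; matching the surviving quadratic term — present and nonvanishing precisely under the stated hypotheses, e.g.\ ``not all $\theta_j=\pi/4$'' guarantees $S\not\equiv0$ for $R_{2n+2}$ and $I_{2n+2}$, while for the general $R_{n+k},I_{n+k}$ the coefficient of $z_n^2$ is identically $1$ — together with the constant term then kills the coefficients of the last two components and of $\alpha$, forcing every $c_i=0$.

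The part I expect to be the main obstacle is the minimality criterion of the second paragraph, namely confirming that allowing \emph{arbitrary} automorphisms on both sides — rather than only the isotropy action $O(m,\mathbb{R})\times SO(2,\mathbb{R})$ exploited in the first theorem — still produces nothing more than real linear relations among the homogeneous Borel components. Once this is established, each of the several cases ($R$ versus $I$, and the ranges $2\le k\le n$, $k=n+1$, $k=n+2$) is a routine comparison of coefficients; its only genuine subtlety is verifying that the hypotheses on the $\theta_j$ prevent the relevant quadratic or square-root term from degenerating, which is exactly the configuration in which the map would instead reduce to a lower target dimension.
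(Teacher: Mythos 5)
Your proposal is correct and follows essentially the same route as the paper: pass to homogeneous coordinates via the Borel embedding, use that $\mathrm{Aut}(D^{IV}_m)$ acts there by real matrices in $O(m,2,\mathbb{R})$ so that equivalence to $(G,\mathbf{0})$ forces a nontrivial real linear relation among the homogeneous components, and then rule out any such relation by coefficient comparison (with $\theta_j\in(0,\pi/4]$ and the nondegeneracy hypotheses entering exactly as you describe). The only differences are cosmetic: the paper phrases the hyperplane step as comparing the $(n+2)$-th entry of $\mathcal{R}_{n+2}((z,s)\cdot{\bf B})\cdot T$ for ${\bf B}\in U(n,1)$, $T\in\mathrm{Aut}(D^{IV}_{n+2})$, and it omits the isometry verification you supply, so the step you flag as the ``main obstacle'' is settled by precisely the real-matrix observation you already made.
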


\begin{proof}
We will only prove the case $k=2$ and other cases follow by similar argument.
Apply the Borel embedding to embed $\mathbb{B}^n$ as an open subset of $\mathbb{P}^n$ and $D^{IV}_{m}$ as an open subset of $\mathbb{Q}^{m} \subset \mathbb{P}^{m+1}$ and write $[z, s] = [z_1, \cdots, z_n, s]$ to denote the homogeneous coordinates in $\mathbb{P}^n$. Here recall the Borel embedding of $\mathbb{B}^n$ into $\mathbb{P}^n$ is given by
$$(z_1, \cdots,z_n) \rightarrow [z_1, \cdots, z_{n},1].$$
The Borel embedding of $D_{m}^{IV}$ into $\mathbb{Q}^m \subset \mathbb{P}^{m+1}$ is as described in Section 2.

We first prove the theorem for $R_{n+2}$. Under the homogeneous coordinates, $R_{n+2}$ can be identified with
$$\mathcal{R}_{n+2}(z, s)
= \left[g_1(z, s), \cdots, g_{n+4}(z, s)  \right]
$$ from $\mathbb{P}^n$ to $\mathbb{P}^{n+3}$, where
\begin{equation}\notag
\begin{split}
g_1(z, s) &= \cos\theta_1 (s-z_n) z_1; \\
g_2(z, s) &= \sqrt{-1} \sin\theta_1 (s-z_n) z_1; \\
g_j(z, s) &= (s-z_n) z_{j-1}  {\rm~for~} 3 \leq j \leq n ; \\
g_{n+1}(z, s) &=  \frac{\cos(2{\theta_1}) z_1^2 + \sum_{j=2}^{n-1} z_j^2 -2 z_n^2 +2 z_n s}{2\sqrt{2}};\\
g_{n+2}(z, s) &= \frac{\cos(2{\theta_1}) z_1^2 + \sum_{j=2}^{n-1} z_j^2 +2 z_n^2 -2 z_n s}{2\sqrt{-2}};\\
g_{n+3}(z, s) &= \frac{1}{2\sqrt{2}} \left( \cos(2\theta_1) z_1^2 + \sum_{j=2}^{n} z^2_j +2 s^2 - 2 z_n s \right); \\
g_{n+4}(z, s) &= \frac{1}{2\sqrt{-2}} \left( -\cos(2\theta_1) z_1^2 - \sum_{j=2}^{n} z^2_j +2 s^2 - 2 z_n s \right).
\end{split}
\end{equation}

{\bf Claim:} The set $\{g_1, \cdots, g_{n+4}\}$ is linearly independent over $\mathbb{R}$ on any open subset of $\mathbb{C}^{n+1}$. Consequently, for any ${\bf B} \in U(n, 1)$, the set $\{\hat{g}_1, \cdots, \hat{g}_{n+4}\}$ with $\hat g_{j} = g_j((z, s)\cdot \bf B)$ is linearly independent over $\mathbb{R}$.
\medskip

{\bf Proof of Claim:} We will just prove the first part of the claim and the second part follows easily. Let $\{a_1, \cdots, a_{n+4}\}$ be a set of real numbers such that $\sum_{j=1}^{n+4} a_j \hat g_j \equiv 0$. By comparing coefficients of $z_n z_j$ for $1 \leq j \leq n-1$, we know $a_j =0$ for $1 \leq j \leq n$. By comparing coefficients of $z_n^2$, we know $a_{n+1}=a_{n+2} =0$. By comparing coefficients of $s^2$, we know $a_{n+3}=a_{n+4} =0.$ This proves the claim.

\medskip

Now suppose that $R_{n+2}$ is not minimum. Namely, there exists $F: \mathbb{B}^n \rightarrow D^{IV}_m$ with $m < n+2$ such that $R_{n+2}$ is equivalent to $(F, 0)$. More precisely, under homogeneous coordinates, there exist ${\bf B} \in U(n, 1)$ and $T \in {\rm Aut}(D^{IV}_{n+2})$ such that \begin{equation}\label{contra}
\mathcal{R}_{n+2}((z, s) \cdot {\bf B}) \cdot T = [\tilde{F}(z, s), 0, \cdots],
\end{equation}
where $\tilde{F}$ is the map obtained from $F$ under homogeneous coordinates. By comparing the $(n+2)$-th element in (\ref{contra}), we deduce a contradiction to the claim. This shows that $R_{n+2}$ must be minimum.

The conclusion for $I_{n+2}$ in the theorem follows from the similar argument. Under the homogeneous coordinates, $I_{n+2}$ can be identified with
$$\mathcal{I}_{n+2}(z, s)
= \left[h_1(z, s), \cdots, h_{n+4}(z, s)  \right]
$$ from $\mathbb{P}^n$ to $\mathbb{P}^{n+3}$, where
\begin{equation}\notag
\begin{split}
h_1(z, s) &= \cos\theta_1 z_1; \\
h_2(z, s) &= \sqrt{-1} \sin\theta_1 z_1; \\
h_j(z, s) &= z_{j-1}  {\rm~for~} 3 \leq j \leq n+1 ; \\
h_{n+2}(z, s) &= s - \sqrt{s^2 - \cos(2\theta_1) z_1^2 - \sum_{j=2}^n z_j^2};\\
h_{n+3}(z, s) &= \frac{1}{\sqrt{2}} \left( 2s - \sqrt{s^2 - \cos(2\theta_1) z_1^2 - \sum_{j=2}^n z_j^2} \right); \\
h_{n+4}(z, s) &= \frac{1}{\sqrt{-2}} \sqrt{s^2 - \cos(2\theta_1) z_1^2 - \sum_{j=2}^n z_j^2}.
\end{split}
\end{equation}

{\bf Claim:} The set $\{h_1, \cdots, h_{n+4}\}$ is linearly independent over $\mathbb{R}$ on any open subset of $\mathbb{C}^{n+1}$. Consequently, for any ${\bf B} \in U(n, 1)$, the set $\{\hat{h}_1, \cdots, \hat{h}_{n+4}\}$ with $\hat h_{j} = h_j((z, s)\cdot \bf B)$ is linearly independent over $\mathbb{R}$.
\medskip

The proof the claim is very similar to the previous one. Let $\{a_1, \cdots, a_{n+4}\}$ be a set of real numbers such that $\sum_{j=1}^{n+4} a_j \hat h_j \equiv 0$. Then one can show $a_j=0$ for all $j$ by comparing coefficients. %of $\sqrt{s^2 - \cos(2\theta_1) z_1^2 - \sum_{j=2}^n z_j^2}, z_1, \sqrt{-1}z_1, z_2, \cdots, z_n,$
\medskip

The rest proof of the theorem is also similar. Suppose that $I_{n+2}$ is not minimum. Namely, there exists $F: \mathbb{B}^n \rightarrow D^{IV}_m$ with $m < n+2$ such that $I_{n+2}$ is equivalent to $(F, 0)$. More precisely, under homogeneous coordinates, there exist ${\bf B} \in U(n, 1)$ and $T \in {\rm Aut}(D^{IV}_{n+2})$ such that
\begin{equation}\label{contra2}
\mathcal{I}_{n+2}((z, s) \cdot {\bf B}) \cdot T = (\tilde{F}(z, s), 0, \cdots),
\end{equation}
where $\tilde{F}$ is the map obtained from $F$ under homogeneous coordinates. By comparing the $(n+2)$-th element in (\ref{contra2}), we deduce a contradiction to the claim. This shows that $I_{n+2}$ must be minimum.
\end{proof}

\subsection{Minimum holomorphic proper maps to Type IV domains}
We investigate minimim holomorphic proper maps from $\mathbb{B}^n$ to $D_m^{IV}$ in this subsection. By Lemma 2.2 in \cite{XY}, there is no proper holomorphic maps from $\mathbb{B}^n$ to $D_{m}^{IV}$ if $n \geq 2, m \leq n,.$  The following theorem reveals a different phenomenon of proper holomorphic maps from isometries. 

\begin{theorem}\label{thmminimum}
For any $m \geq n+1 \geq 2,$ there is a minimum proper holomorphic map from $\mathbb{B}^n$ to $D_{m}^{IV}.$
\end{theorem}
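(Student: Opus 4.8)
The plan is to realize every such $F$ as a composition of a proper map into a ball with the isometric embedding of a ball into $D^{IV}_m$, and then to read off minimality from a real–linear–independence statement in homogeneous coordinates, exactly in the spirit of Theorem \ref{thmghminimal}. The relevant embedding is $\Phi_m(w)=\big(w_1,\dots,w_{m-1},\,1-\sqrt{1-\sum_{j=1}^{m-1}w_j^2}\,\big)$, the isometric embedding $\mathbb{B}^{m-1}\to D^{IV}_m$; a direct computation gives, for any holomorphic $G=(g_1,\dots,g_{m-1})$ with $|G|<1$,
\begin{equation}\notag
1-(\Phi_m\circ G)\overline{(\Phi_m\circ G)}^t+\tfrac14\big|(\Phi_m\circ G)(\Phi_m\circ G)^t\big|^2=1-|G|^2 .
\end{equation}
Hence if $G:\mathbb{B}^n\to\mathbb{B}^{m-1}$ is proper, then $F:=\Phi_m\circ G$ maps $\mathbb{B}^n$ into $D^{IV}_m$ (its image lies in $\Phi_m(\mathbb{B}^{m-1})\subset D^{IV}_m$, so both defining inequalities hold automatically) and is proper, since $\Phi_m$ is an isometric embedding and a composition of proper maps is proper. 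Thus the whole problem reduces to choosing $G$ well.

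First I would produce, for each $m\ge n+1$, a proper polynomial map $G:\mathbb{B}^n\to\mathbb{B}^{m-1}$ whose components $g_1,\dots,g_{m-1}$ are linearly independent over $\mathbb{R}$ and for which $1-\sum_j g_j^2$ is not the square of a holomorphic function. Starting from a Whitney-type nondegenerate proper map $q:\mathbb{B}^n\to\mathbb{B}^s$ (whose components are $\mathbb{C}$-linearly independent), I apply a ``partial doubling'' that replaces a chosen component $q_i$ by the pair $\big(\tfrac1{\sqrt2}q_i,\tfrac{\sqrt{-1}}{\sqrt2}q_i\big)$: this preserves properness (it does not change $\sum_i|q_i|^2$), converts $\mathbb{C}$-independence of the $q_i$ into $\mathbb{R}$-independence of the resulting components, and—provided at least one component is left undoubled—keeps $\sum_j g_j^2$ from being a perfect square. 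Since iterated Whitney/tensor constructions attain a set of dimensions $s$ with consecutive gaps of size at most $n-1$, varying $s$ together with the number of doubled components reaches every target dimension $m-1\ge n$; so such $G$ exist for all $m\ge n+1$.

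Finally I would establish minimality as in Theorem \ref{thmghminimal}. Embedding $\mathbb{B}^n\hookrightarrow\mathbb{P}^n$ and $D^{IV}_m\hookrightarrow\mathbb{Q}^m\subset\mathbb{P}^{m+1}$, the map $F$ is represented by $m+2$ homogeneous components: the homogenizations $\hat g_1,\dots,\hat g_{m-1}$, together with three functions built from $s^{d}$ and $\hat\psi=\sqrt{s^{2d}-\sum_j\hat g_j^2}$ (here $d=\deg G$). Because $1-\sum_j g_j^2$ is not a perfect square, $\hat\psi$ is irrational and hence $\mathbb{R}$-linearly independent from the rational components; once the $\hat\psi$-terms are removed, a real relation would force one among $\{\hat g_1,\dots,\hat g_{m-1},s^{d}\}$, which cannot occur since the $g_j$ are $\mathbb{R}$-independent and (after centering $G(0)=0$) none is constant. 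As $\mathbb{R}$-linear relations among forms are preserved by the linear substitution induced by any $\mathbf{B}\in U(n,1)$, this independence is robust; so if $F$ were equivalent to $(\hat F,\mathbf 0)$ with $\hat F:\mathbb{B}^n\to D^{IV}_{m-1}$, comparing the vanishing coordinate in $\mathcal F((z,s)\cdot\mathbf B)\cdot T=[\tilde{\hat F},0,\dots]$ would produce exactly such a forbidden real relation, a contradiction.

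The main obstacle I expect is this minimality step: arranging the homogeneous-coordinate bookkeeping so that the reduction hypothesis really does yield a nontrivial real linear relation among the $m+2$ components (the same mechanism as in (\ref{contra})–(\ref{contra2})), and verifying that the partial doubling can be carried out so as to keep both the $\mathbb{R}$-independence of the $\hat g_j$ and the non–perfect–square property robust under all of $U(n,1)$. By contrast, properness and membership in $D^{IV}_m$ are essentially automatic once $F$ is written as $\Phi_m\circ G$, which is what makes the contrast with the isometric case (no minimum isometries beyond $D^{IV}_{2n+2}$) transparent: the extra room comes precisely from allowing $\sum_j g_j^2$, equivalently $FF^t$, to be nonconstant.
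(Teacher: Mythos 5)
Your proposal is correct, and its two pillars are the same as the paper's: the paper also produces its examples by appending $1-\sqrt{1-\sum_j g_j^2}$ to a proper ball map whose components have been partially ``doubled'' (the maps $M_F^{N+k}$ of Theorem \ref{thmpminimum}), and it proves minimality exactly by your mechanism --- $\mathbb{R}$-linear independence of the $m+2$ homogeneous components, resting on the non-perfect-square statement (Lemma \ref{xxxxxxx}), followed by the vanishing-coordinate comparison of Theorem \ref{thmghminimal}. The one genuine difference is where the seed ball maps come from. The paper insists the seed be a \emph{minimum monomial} proper map and invokes Huang--Ji--Yin (Theorem \ref{thmhjy}) to obtain such maps into $\mathbb{B}^{kn}$ for every $k\geq 1$; you instead use elementary iterated Whitney/tensor maps and observe, correctly, that minimality of the seed is never used --- only $\mathbb{R}$-linear independence of its components (automatic for distinct monomials) and the non-perfect-square property enter the argument. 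This makes your route self-contained where the paper's leans on a cited theorem, at the cost of having to verify the dimension-covering and independence claims by hand; the paper's choice buys brevity, since distinctness and nonconstancy of the monomials come for free from minimality. Two details in your sketch deserve care. First, your doubling uses $\theta=\pi/4$, so doubled pairs drop out of $\sum_j g_j^2$ entirely and you must (as you note) keep at least one component undoubled, whereas the paper fixes $\theta\in(0,\pi/4)$ so that even the fully doubled map $M_F^{2N+1}$ remains admissible. Second, for $n=1$ the Whitney trick does not raise the target dimension at all, so ``gaps of size at most $n-1$'' degenerates; there you need the tensor-type maps $z\mapsto(\cos\alpha_1\, z,\ \sin\alpha_1\cos\alpha_2\, z^2,\ \dots)$ realizing every seed dimension, which is the role of the paper's separate one-line treatment of $n=1$.
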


To establish Theorem \ref{thmminimum}, we first prove the following result.
\begin{theorem}\label{thmpminimum}
Let $N \geq n \geq 1.$ Let $F=(f_1, \cdots, f_N)$  be  a minimum monomial proper map from $\mathbb{B}^n$ to $\mathbb{B}^N$, where each $f_i=c_i z^{\alpha_i}$ for some multiindex $\alpha_i$ and 
 real number $c_i.$ Define holomorpic  maps from $\mathbb{B}^n$ to $\mathbb{C}^{N+k}, 1 \leq k \leq N+1$ associated to $F$ as follows.
$$M^{N+1}_F:=\left( f_1,\cdots, f_N, 1-\sqrt{1-\sum_{i=1}^N f_i^2}\right),$$
and for each $2 \leq k \leq N+1,$ fixing $\theta \in (0, \frac{\pi}{4}),$ define

$$M^{N+k}_F :=  \bigg( \cos{\theta} f_1, \sqrt{-1}\sin{\theta} f_1, \cdots, \cos{\theta}f_{k-1}, \sqrt{-1}\sin{\theta}f_{k-1}$$

$$f_k, \cdots, f_N, 1-\sqrt{1-\cos(2\theta) \sum_{j=1}^{k-1} f^2_j - \sum_{j=k}^ N f_j^2}  \bigg).$$
Then each $M^{N+k}_F, 1 \leq k \leq N+1,$ is a minimum proper holomorphic map from $\mathbb{B}^n$ to $D_{N+k}^{IV}.$
\end{theorem}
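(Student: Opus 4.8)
The plan is to verify the two assertions separately—that each $M^{N+k}_F$ maps $\mathbb{B}^n$ properly into $D^{IV}_{N+k}$, and that it is minimum—handling all $k$ by one computation. For properness, write $M = M^{N+k}_F$ and let $w$ be its last component, so $w = 1-\sqrt{1-G}$ with $G = \cos(2\theta)\sum_{j<k}f_j^2 + \sum_{j\ge k}f_j^2$ (and $G=\sum_j f_j^2$ when $k=1$). First I would record the identity $MM^t = 2w$: the paired coordinates $\cos\theta\,f_j,\ \sqrt{-1}\sin\theta\,f_j$ contribute $\cos(2\theta)f_j^2$, so $MM^t = G + w^2 = 2 - 2\sqrt{1-G} = 2w$. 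Since $\cos^2\theta + \sin^2\theta = 1$ one also gets $M\overline M^t = F\overline F^t + |w|^2$ and $\tfrac14|MM^t|^2 = |w|^2$, whence
\[
1 - M\overline M^t + \tfrac14|MM^t|^2 = 1 - F\overline F^t .
\]
As $F$ is proper into $\mathbb{B}^N$ we have $F\overline F^t < 1$ on $\mathbb{B}^n$, so the right side is positive; and since $|G| \le F\overline F^t < 1$, the elementary bound $|1-\sqrt{1-G}| < 1$ (valid because $\mathrm{Re}\sqrt{1-G} > 0$) gives $M\overline M^t = F\overline F^t + |w|^2 < 2$. Hence $M(\mathbb{B}^n)\subset D^{IV}_{N+k}$, and as $z\to\partial\mathbb{B}^n$ the defining quantity $1-F\overline F^t\to 0$, which yields properness.

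For minimality I would mimic the homogeneous-coordinate argument of Theorem~\ref{thmghminimal}. Since $F$ is a minimum monomial proper map, I may assume the $z^{\alpha_j}$ are pairwise distinct and each $c_j\ne 0$, $\alpha_j\ne 0$. Put $D = \max_j|\alpha_j|$. Applying the Borel embeddings of $\mathbb{B}^n$ into $\mathbb{P}^n$ and of $D^{IV}_{N+k}$ into $\mathbb{Q}^{N+k}\subset\mathbb{P}^{N+k+1}$ and clearing denominators by $s^D$, I would write $M^{N+k}_F$ as $\mathcal M(z,s)=[\,\cdots\,]$ into $\mathbb{P}^{N+k+1}$ whose homogeneous coordinates are the monomials $\cos\theta\,c_j z^{\alpha_j}s^{D-|\alpha_j|}$, $\sqrt{-1}\sin\theta\,c_j z^{\alpha_j}s^{D-|\alpha_j|}$ $(j<k)$ and $c_j z^{\alpha_j}s^{D-|\alpha_j|}$ $(j\ge k)$, together with the three entries $s^D - P$, $\tfrac1{\sqrt2}(2s^D - P)$, $\tfrac1{\sqrt{-2}}P$, where $P = \sqrt{s^{2D}-\tilde G}$ and $\tilde G = \sum_j d_j z^{2\alpha_j}s^{2D-2|\alpha_j|}$ with all $d_j>0$ (using $\tfrac12 MM^t = w$).

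The crux is the claim, proved as in Theorem~\ref{thmghminimal}, that these $N+k+2$ functions are linearly independent over $\mathbb{R}$ on any open set, and consequently so are their compositions with any $\mathbf B\in U(n,1)$ (a linear substitution preserves both independence and non-squareness). The heart of the claim is that $P$ is irrational, i.e. $1-G$ is not a perfect square of a polynomial. The main obstacle is exactly this step, and I would clear it by a leading-term argument: if $1-G = (1+R)^2$ with $R(0)=0$ (normalizing the sign of the square root), then $G = -2R - R^2$, and in lexicographic order the leading monomial $c z^\mu$ of $R$ (with $\mu\ne 0$) forces the leading monomial of $-2R - R^2$ to be $-c^2 z^{2\mu}$, since $2\mu >_{\mathrm{lex}}\mu$; thus $G$ would carry a negative leading coefficient, contradicting that every coefficient $d_j$ of $G$ is positive (this positivity comes from $c_j^2>0$ and $\cos(2\theta)>0$). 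Granting irrationality, any real relation among the $N+k+2$ functions forces the coefficient of $P$ to vanish, and then comparison of the distinct monomials $z^{\alpha_j}s^{D-|\alpha_j|}$ and $s^D$ (distinct precisely because $\alpha_j\ne 0$) kills the remaining coefficients.

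Finally, minimality follows as in Theorem~\ref{thmghminimal}. If $M^{N+k}_F$ were equivalent to some $(G',\mathbf 0)$ with $G'\colon\mathbb{B}^n\to D^{IV}_{N+k-1}$, then in homogeneous coordinates there would exist $\mathbf B\in U(n,1)$ and $T\in\mathrm{Aut}(D^{IV}_{N+k})\subset O(N+k,2,\mathbb{R})$ with $\mathcal M((z,s)\mathbf B)\cdot T = [\tilde G',0,\ast,\ast]$, whose $(N+k)$-th entry vanishes. Comparing that entry gives a nontrivial real relation $\sum_j \mathcal M_j((z,s)\mathbf B)\,T_{j,N+k} = 0$—the coefficients are real because $T$ is, and not all zero because $T$ is invertible—contradicting the linear independence established above. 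This forces $M^{N+k}_F$ to be minimum for every $1\le k\le N+1$.
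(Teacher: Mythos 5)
Your proposal is correct and takes essentially the same route as the paper's proof: properness via the identity $1-M\overline{M}^t+\tfrac14|MM^t|^2=1-F\overline{F}^t$, and minimality via the Borel embedding, real-linear independence of the homogenized components, and the non-squareness of $1-\sum_j d_j z^{2\alpha_j}$, concluded by the same column-of-$T$ contradiction as in Theorem \ref{thmghminimal}. The only differences are ones of completeness, in your favor: the paper writes out only the case $k=1$ and leaves the key non-squareness statement (Lemma \ref{xxxxxxx}) to the reader, whereas you treat all $k$ uniformly (noting $\cos(2\theta)>0$ keeps the coefficients positive) and prove the lemma by the lexicographic leading-monomial argument, and you also justify the reductions ($c_j\neq 0$, distinct $\alpha_j$, $\alpha_j\neq 0$) that minimality of $F$ provides and that the paper uses implicitly.
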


\begin{proof}
We will prove only the statement for $k=1.$ The other cases can be proved by a similar argument. Write $m=N+1, H=M_{F}^{N+1}.$
Denote the highest degree of $f_i, 1 \leq i \leq N$ by $d$.

By direct computation one can verify that $H$ is a proper holomorphic  map from $\mathbb{B}^n$ to $D_m^{IV}.$
We now prove that $H$ is minimum. For that we again apply the Borel embedding to embed $\mathbb{B}^n$ as an open subset of $\mathbb{P}^n$ and $D_m^{IV}$ as an open subset  $\mathbb{Q}^m \subset \mathbb{P}^{m+1}.$ Write $[z,s]=[z_1,...,z_n,s]$ to denote the homogeneous coordinates in $\mathbb{P}^n.$ Under the homogeneous coordinates, $H$ is identified with
$$\mathcal{H}(z,s)=[h_1(z,s),...,h_{m+2}(z,s)]$$
from $\mathbb{P}^{n}$ to $\mathbb{P}^{m+1},$ where
$$h_1(z,s)=s^d f_1\left(\frac{z}{s}\right),~~\cdots,~~ h_N(z,s)=s^d f_N\left(\frac{z}{s} \right),$$ 
$$h_m(z,s)=s^d-\sqrt{s^{2d}-\sum_{i=1}^N h_i^2},$$
$$h_{m+1}(z,s)=\frac{1}{\sqrt{2}}\left(2s^d-\sqrt{s^{2d}-\sum_{i=1}^N h_i^2} \right),$$
$$h_{m+2}(z,s)=\frac{1}{\sqrt{-2}}\sqrt{s^{2d}-\sum_{i=1}^N h_i^2}.$$

\begin{claim}\label{lemmadep}
The set $\{h_1,...,h_{m+2}\}$ is linearly independent over $\mathbb{R}.$ Consequently, for any ${\bf B} \in U(n,1),$ the set $\{\hat{h}_1, \cdots, \hat{h}_{m+2}\}$ with $\hat{h}_j(z, s)=h_j((z,s)\cdot {\bf B})$ is linearly independent over $\mathbb{R}.$
\end{claim}
%\begin{proof}
 We first note that $s^{2d}-\sum_{i=1}^N h_i^2$ is not a perfect square of a polynomial. This amounts to the following easy lemma whose proof will be left to readers.
\begin{lemma}\label{xxxxxxx}
Let $f_i, 1 \leq i \leq N$, be mutually distinct monomials with real coefficients. Then  $1-\sum_{i=1}^N f_i^2$ is not a perfect square of a polynomial. 
\end{lemma}
%{\bf Proof:} We will leave the proof of the lemma to the readers.

It follows from Lemma \ref{xxxxxxx} that $\{h_1,...,h_N, s^d, \sqrt{s^{2d}-\sum_{i=1}^N h_i^2}\}$ is linearly independent over $\mathbb{R}.$ This further implies $\{h_1,...,h_{m+2}\}$ is also linearly independent over $\mathbb{R}$. The latter part of Claim \ref{lemmadep} is then an easy  consequence.
The rest of proof is just a copy of that of Theorem \ref{thmghminimal}.
\end{proof}

We recall the results about the gap conjecture on proper maps between balls. The following intervals appear in the gap conjecture for proper holomorphic maps between balls \cite{HJY1}.
For $n > 2$, let $K(n):= {\rm max} \{m \in
\mathbb{Z}^+: m(m+1)/2<n\}$ and let $I_k:= \{m\in \mathbb{Z}^+: kn<m<(k+1)n-k(k+1)/2\}$
for $1\leq k\leq K(n)$. The following theorem is proved by Huang-Ji-Yin (See also D'Angelo-Lebl [DL1]).

%This corollary follows from the Theorem \ref{thmpminimal} and the following partial solution to the gap conjecture in \cite{HJY1}.

\begin{theorem}(\text{[HJY1]})\label{thmhjy}
For any $N\geq n \geq 2$ with $N\not\in \cup_{k=1}^{K(n)} I_k$, there is a minimum
proper monomial map from $\mathbb{B}^n$ to $\mathbb{B}^N.$ 
\end{theorem}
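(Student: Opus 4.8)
The plan is to prove Theorem \ref{thmhjy} by an explicit construction: for each admissible target dimension (that is, each $N \geq n$ with $N \notin \bigcup_{k=1}^{K(n)} I_k$) I would exhibit a proper monomial map $F\colon \mathbb{B}^n \to \mathbb{B}^N$ with exactly $N$ nonzero components and then check that it is minimum. The first step is the standard dictionary between proper monomial maps and partitions of unity on the simplex. Writing $x_j = |z_j|^2$ and $\sigma = x_1 + \cdots + x_n$, a map $F = (c_1 z^{\alpha_1}, \dots, c_N z^{\alpha_N})$ with distinct exponents $\alpha_i$ and $c_i > 0$ is proper into $\mathbb{B}^N$ exactly when
$$\sum_{i=1}^N c_i^2\, x^{\alpha_i} \equiv 1 \pmod{\sigma - 1},$$
and here $N$ is precisely the number of monomials occurring. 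Thus the problem is reduced to realizing each admissible $N$ as the number of monomials in a nonnegative polynomial congruent to $1$ modulo $\sigma - 1$, all of whose exponents are nonzero.

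The engine of the construction is D'Angelo's lifting operation together with a one-parameter refinement. Since $\sigma \equiv 1$ on the simplex, replacing a single term $c\,x^{\alpha}$ by $c\,x^{\alpha}\sigma = \sum_{j=1}^n c\,x^{\alpha}x_j$ preserves the congruence; this \emph{full lift} trades one monomial for $n$ monomials, so it changes the monomial count by $n-1$ minus the number of products $x^{\alpha}x_j$ that collide with monomials already present, and by choosing which term to lift the net increment can be made any value in $\{0,1,\dots,n-1\}$. In addition, for a parameter $t \in (0,1)$ the \emph{partial lift} $c\,x^{\alpha} \mapsto c(1-t)x^{\alpha} + c\,t\,x^{\alpha}\sigma$ preserves both the congruence and positivity while \emph{retaining} $x^{\alpha}$, so it raises the count by up to $n$. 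Starting from the identity map $\sigma = x_1 + \cdots + x_n$ (where $N = n$) and iterating these two moves, I would show every admissible $N$ is attained. Concretely, the dimensions lying between consecutive gaps form the blocks $[\,jn - \binom{j}{2},\, jn\,]$; I would realize the bottom $jn - \binom{j}{2}$ of each block by lifting along a single chain of variables $x_n, x_{n-1}, \dots$, forcing the triangular collision pattern $0,1,2,\dots$ that accumulates exactly the deficit $\binom{j}{2}$ from $jn$, and then sweep upward through the block either by releasing one collision at a time or by inserting a partial lift. Once $j$ is large enough that consecutive blocks overlap, all remaining large $N$ are covered automatically.

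Finally I would verify minimality, following the method used in the proofs of Theorem \ref{thmghminimal} and Theorem \ref{thmpminimum}. After normalizing $F(0)=0$, a reduction of $F$ to $(G,\mathbf{0})$ with a smaller target would force a nontrivial real-linear relation among the component functions together with the normalizing data of the composing automorphisms; such a relation is impossible because the monomials $z^{\alpha_i}$ produced by the lifting process are distinct and $\mathbb{R}$-linearly independent (cf. Lemma \ref{xxxxxxx} and the linear-independence Claims in those theorems, now applied to the constructed monomial support). This step is comparatively routine once the support of $F$ is understood.

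The main obstacle is the combinatorial bookkeeping in the middle step: proving that the increments available from full and partial lifts, tracked through the collision count, cover \emph{every} admissible $N$ and in particular fill each block $[\,jn - \binom{j}{2},\, jn\,]$ completely. Producing, for each admissible $N$, an explicit lift sequence of the correct length and collision profile — equivalently, matching the attainable monomial counts to the complement of $\bigcup_k I_k$ — is the technical heart of the argument, whereas the properness dictionary and the minimality check are comparatively direct.
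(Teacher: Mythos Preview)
The paper does not prove Theorem~\ref{thmhjy}; it is quoted verbatim from \cite{HJY1} and used as a black box in the proof of Theorem~\ref{thmminimum}. So there is no ``paper's own proof'' to compare your proposal against.

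That said, your outline is broadly the right strategy and is in the spirit of the constructions in \cite{HJY1} and \cite{DL1}: translate proper monomial maps into nonnegative polynomial identities on the simplex, then use tensoring (full lifts) and convex interpolation (partial lifts) to realize every admissible monomial count. The combinatorial bookkeeping you identify as the heart of the matter is indeed the nontrivial part, and your description of how the triangular collision pattern produces the deficit $\binom{j}{2}$ is correct in outline, though you would still need to pin down the explicit lift sequences.

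One correction on the minimality step: the argument you borrow from Theorems~\ref{thmghminimal} and~\ref{thmpminimum} relies on the fact that $\mathrm{Aut}(D^{IV}_m) \subset O(m,2;\mathbb{R})$ acts by \emph{real} matrices, which is why those proofs reduce to $\mathbb{R}$-linear independence. For maps between balls the automorphism group is $PU(N,1)$, so passing to homogeneous coordinates on $\mathbb{P}^N$ a putative reduction $\psi \circ F \circ \phi = (G,0)$ forces a \emph{complex}-linear relation among the homogenized components $s^{d-|\alpha_i|} z^{\alpha_i}$ together with $s^d$. This is a stronger constraint to rule out, not a weaker one, but it still holds: distinct monomials are $\mathbb{C}$-linearly independent, and this survives the $U(n,1)$ change of variables induced by $\phi$. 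So your conclusion is correct, but the mechanism is not the $\mathbb{R}$-linear one of Lemma~\ref{xxxxxxx} and the Claims you cite; invoking those results verbatim would be a misattribution.
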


{\bf Proof of Theorem \ref{thmminimum}:} When $n=1,$ clearly there are minimum monomial proper maps from $\Delta$ to $\mathbb{B}^N$ for any $N \geq 1.$ When $n \geq 2,$ by Theorem \ref{thmhjy} there is always a minimum monomial proper map from $\mathbb{B}^n$ to $\mathbb{B}^{kn}$ for any $k \geq 1.$  Then Theorem \ref{thmminimum} is a consequence of Theorem \ref{thmpminimum}.

%%%%%%%%%%%%%%%%%%%%%%%%%%%%%%%%%%%%%%%%%%%%%%%%%%%%%
\section{Inequivalent families of holomorphic isometries}
Let $I_{n+k}$ be the maps defined in Section 3. In this section, we will make use of $I_{n+k}$ to give inequivalent families of holomorphic isometries from $\mathbb{B}^n$ to $D_{n+k}^{IV}, 2 \leq k \leq n+2.$ To emphasize the dependence on $\theta,$ we will write $I_{n+k, \theta}$ instead of $I_{n+k}.$ More precisely, for $\theta \in (0, \pi/4]$, define $$I_{n+2, \theta}(z)= \left(\cos{\theta} z_1, \sqrt{-1}\sin{\theta} z_1, z_2, \cdots, z_{n}, 1-\sqrt{1-\cos(2{\theta}) z^2_1 - \sum_{j=2}^{n} z_j^2} \right). $$
Fixing $\beta \in (0, \pi/4)$, for $\theta \in [\beta, \pi/4]$ define 
$$I_{n+3, \theta}(z)= \bigg( \cos{\theta} z_1, \sqrt{-1}\sin{\theta} z_1, \cos{\beta} z_2, \sqrt{-1}\sin{\beta} z_2, z_3, \cdots, z_{n}, $$
$$1-\sqrt{1-\cos(2{\theta}) z^2_1 -\cos 2\beta z_2^2 - \sum_{j=2}^{n} z_j^2} \bigg). $$
Similarly, we define $I_{n+k, \theta}: \mathbb{B}^n \rightarrow D_{n+k}^{IV}$ for all $3 \leq k \leq n+1:$ 

$$I_{n+k, \theta} = \bigg( \cos{\theta} z_1, \sqrt{-1}\sin{\theta} z_1, \cos{\beta} z_2, \sqrt{-1}\sin{\beta} z_2, \cdots, \cos{\beta} z_{k-1}, \sqrt{-1}\sin{\beta} z_{k-1}, $$
$$z_k, \cdots, z_n, 1-\sqrt{1-\cos(2\theta) z^2_1 - \cos(2{\beta})\sum_{j=2}^{k-1} z^2_j-\sum_{j=k}^n z_j^2} \bigg)$$ for $0 < \beta \leq \theta \leq \pi/4$. Here when $k=n+1,$
the components $``z_k, \cdots, z_n"$ is understood to be void.
Fixing $\alpha \in (0, \pi/4)$ and $\beta \in (0, \pi/4)$, we define 

%\newpage

$$I_{2n+2, \theta} =  \bigg( \cos{\theta} z_1, \sqrt{-1}\sin{\theta} z_1, \cos{\beta} z_2, \sqrt{-1}\sin{\beta} z_2, \cdots, \cos{\beta} z_{n}, \sqrt{-1}\sin{\beta} z_n, $$
$$ \frac{\cos\alpha}{\cos(2\alpha)} \left(1 - \sqrt{1-\cos(2\alpha) \left(\cos(2\theta) z^2_1 + \cos(2{\beta})\sum_{j=2}^{n} z^2_j\right)}\right), $$
$$\frac{\sqrt{-1}\sin\alpha}{\cos(2\alpha)}\left(  \sqrt{1-\cos(2\alpha) \left(\cos(2\theta) z^2_1 + \cos(2{\beta})\sum_{j=2}^{n} z^2_j\right)} -1 \right) \bigg)$$ for $ \beta \leq \theta \leq \pi/4.$
For $n \geq 2, 2 \leq k \leq n+2$, we will show that $I_{n+k,\theta}$ gives a real parameter family of inequivalent minimum holomorphic isometries from $\mathbb{B}^n$ to $D^{IV}_{n+k}$. More precisely,

\begin{theorem}\label{thmneqv}
Let $n \geq 2$. Then the following statements holds.
\begin{itemize}
\item $\{I_{n+2, \theta}\}_{0 < \theta \leq \pi/4}$ is a family of mutually inequivalent minimum  holomorphic isometries.
\item For each $2 < k \leq n+2$ and fixed $\beta \in (0, \pi/4)$, $\{I_{n+k, \theta}\}_{\beta \leq \theta \leq \pi/4}$ is a family of mutually inequivalent minimum holomorphic isometries.
%\item For fixed $\beta \in (0, \pi/4)$, $\{I_{2n+2, \theta}\}_{\beta \leq \theta \leq \pi/4}$ is a family of inequivalent holomorphic isometries.
\end{itemize}
More precisely, for each $2 \leq k \leq n+2$, $I_{n+k, \theta_1}$ is equivalent to $I_{n+k, \theta_2}$ if and only if $\theta_1=\theta_2$.
\end{theorem}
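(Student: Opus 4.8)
The minimality and the isometry property require nothing new: each $I_{n+k,\theta}$ with $2\le k\le n+2$ is a special case of the map $I_{n+k}$ of Section 3 (take $\theta_1=\theta$, $\theta_2=\cdots=\theta_{k-1}=\beta$, and, for $k=n+2$, the fixed angle $\alpha$), so Theorem \ref{thmghminimal} already shows it is a minimum holomorphic isometry. Hence the only content is the mutual inequivalence, and since the ``if'' direction is trivial it suffices to prove: if $I_{n+k,\theta_1}$ is equivalent to $I_{n+k,\theta_2}$, then $\theta_1=\theta_2$. The plan is to attach to each $I_{n+k,\theta}$ a projective invariant that recovers $\theta$, namely its \emph{branch quadric}. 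Writing the last component of $I_{n+k,\theta}$ as $1-\sqrt{1-Q_\theta(z)}$, with
$$Q_\theta(z)=\cos(2\theta)\,z_1^2+\cos(2\beta)\sum_{j=2}^{k-1}z_j^2+\sum_{j=k}^{n}z_j^2 \quad (2\le k\le n+1),$$
the only source of multivaluedness of $I_{n+k,\theta}$, continued as an algebraic map, is the square root; its branch locus is the quadric $\mathcal{Q}_\theta=\{P_\theta=0\}\subset\mathbb{P}^n$, where $P_\theta(z,s)=s^2-Q_\theta(z)$ is the homogenization appearing in the proof of Theorem \ref{thmghminimal}. For $k=n+2$ one uses the analogous radicand carrying the extra factor $\cos(2\alpha)$.

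First I would record that $P_\theta$ is squarefree: a quadric polynomial fails to be squarefree only when it is a scalar multiple of the square of a linear form, which $P_\theta$ is not, since it has no linear term and $Q_\theta\not\equiv0$. Thus $\mathcal{Q}_\theta$ determines $P_\theta$ up to a scalar and $\sqrt{P_\theta}\notin\mathbb{C}(z,s)$. Next, passing through the Borel embeddings exactly as in the proof of Theorem \ref{thmghminimal}, an equivalence $I_{n+k,\theta_1}=\phi\circ I_{n+k,\theta_2}\circ\psi$ with $\psi\in\mathrm{Aut}(\mathbb{B}^n)$ and $\phi\in\mathrm{Aut}(D^{IV}_{n+k})$ expresses both sides as algebraic maps of $\mathbb{P}^n$. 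Because $\psi$ (a member of $PU(n,1)$) and $\phi$ (a projective linear map on $\mathbb{Q}^{n+k}$) are rational and unramified, the field generated by the components of the right-hand side is $\mathbb{C}(z,s)[\sqrt{P_{\theta_2}\circ\psi}]$, while that of the left-hand side is $\mathbb{C}(z,s)[\sqrt{P_{\theta_1}}]$. Equality of the two maps forces these degree-two extensions to coincide, so $P_{\theta_1}$ and $P_{\theta_2}\circ\psi$ differ by a square in $\mathbb{C}(z,s)$; comparing degrees and invoking squarefreeness gives $P_{\theta_1}=\mathrm{const}\cdot(P_{\theta_2}\circ\psi)$. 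Hence $\psi$ carries $\mathcal{Q}_{\theta_1}$ onto $\mathcal{Q}_{\theta_2}$.

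Representing $\psi$ by $\mathbf{B}\in U(n,1)$, i.e. $\mathbf{B}H\mathbf{B}^{*}=H$ with $H=\mathrm{diag}(I_n,-1)$ the Hermitian form defining $\mathbb{B}^n$, the quadric correspondence becomes the matrix identity $S_{\theta_1}=\mu\,\mathbf{B}\,S_{\theta_2}\,\mathbf{B}^{t}$ for some $\mu\neq0$, where $S_\theta$ is the real diagonal symmetric matrix of $P_\theta$. To extract a $U(n,1)$-invariant I would form
$$\Theta_\theta:=\bigl(S_\theta H^{-1}\bigr)\bigl(\overline{S_\theta}\,H^{-1}\bigr).$$
A short computation combining $S_{\theta_1}=\mu\,\mathbf{B}S_{\theta_2}\mathbf{B}^{t}$ with $\mathbf{B}H\mathbf{B}^{*}=H$ (and $\overline{H}=H$) yields $\Theta_{\theta_1}=|\mu|^2\,\mathbf{B}\,\Theta_{\theta_2}\,\mathbf{B}^{-1}$, so $\Theta_{\theta_1}$ and $\Theta_{\theta_2}$ share eigenvalues up to the common positive factor $|\mu|^2$. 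Since $S_\theta$ is real, $\Theta_\theta=(S_\theta H^{-1})^2$ is diagonal with eigenvalue multiset
$$\bigl\{\cos^2(2\theta),\ (\cos^2(2\beta))^{(k-2)},\ 1^{(n-k+2)}\bigr\}\quad(2\le k\le n+1),$$
and $\{(\cos 2\alpha\cos 2\theta)^2,\ (\cos 2\alpha\cos 2\beta)^{2\,(n-1)},\ 1\}$ when $k=n+2$.

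Finally I would compare maximal eigenvalues. Because $\beta$ (and $\alpha$) lie strictly inside $(0,\pi/4)$ and $0<\theta\le\pi/4$, every eigenvalue other than the top one is strictly smaller than $1$, so the largest eigenvalue of each $\Theta_{\theta_i}$ equals $1$ and is actually attained. The scaling relation then forces $1=|\mu|^2\cdot 1$, so $|\mu|^2=1$ and the two multisets coincide; cancelling the common entries leaves $\cos^2(2\theta_1)=\cos^2(2\theta_2)$ (dividing by $\cos^2(2\alpha)\neq0$ in the case $k=n+2$). As $\cos(2\theta)$ is nonnegative and strictly decreasing on $(0,\pi/4]$, this gives $\theta_1=\theta_2$, as desired. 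The step I expect to be most delicate is the branch-locus rigidity of the second paragraph: one must argue carefully that the target automorphism $\phi$ can neither absorb nor alter the ramification, so that the two branch quadrics are genuinely forced to correspond under $PU(n,1)$. Once this is secured, the eigenvalue bookkeeping is routine, the hypothesis $n\ge2$ ensuring that the top eigenvalue $1$ is present (indeed with multiplicity, for $k\le n+1$) and that the invariant is nondegenerate.
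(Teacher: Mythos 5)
Your proposal is correct, and its first half is essentially the paper's own reduction: the paper likewise passes to homogeneous coordinates via the Borel embedding, uses the irrationality of the square root to show that an equivalence forces $\sqrt{H_{\theta_1}((z,s){\bf U})}=R_1\sqrt{H_{\theta_2}(z,s)}+R_2$ with $R_1,R_2$ rational, and concludes (your ``branch--quadric rigidity'') that the two quadrics must be $U(n,1)$-congruent up to a scalar, i.e.\ ${\bf U}A_{\theta_1}{\bf U}^t=cA_{\theta_2}$. Where you genuinely diverge is in ruling out this matrix identity. The paper does it through Proposition \ref{imp}, a standalone linear-algebra statement proved by a fairly long computation (Cramer's rule, the claim that only one entry of the first column of ${\bf U}$ is nonzero, and a determinant/modulus comparison). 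You instead form the conjugation invariant $\Theta_\theta=\bigl(S_\theta H^{-1}\bigr)\bigl(\overline{S_\theta}H^{-1}\bigr)$ and check that $S_{\theta_1}=\mu\,{\bf B}S_{\theta_2}{\bf B}^t$ together with ${\bf B}H{\bf B}^{*}=H$ gives $\Theta_{\theta_1}=|\mu|^2\,{\bf B}\,\Theta_{\theta_2}\,{\bf B}^{-1}$; this identity is correct (it follows from ${\bf B}^tH^{-1}\overline{\bf B}=H^{-1}$ and ${\bf B}^{*}H^{-1}=H^{-1}{\bf B}^{-1}$), and the eigenvalue-multiset comparison, anchored by the top eigenvalue $1$, then recovers $\theta$ directly. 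Your route buys three things: it is much shorter than the proof of Proposition \ref{imp}; it treats all $2\le k\le n+2$ (including the $\alpha$-twisted case $k=n+2$) in one uniform computation, whereas the paper verifies $k=2$ and declares the rest similar; and by working with squarefreeness rather than irreducibility of $P_\theta$ it painlessly covers the edge case $\theta_1=\pi/4$, $n=2$, $k=2$, where $s^2-z_2^2$ is reducible and the paper's irreducibility remark does not literally apply (though its coprimality argument survives, since $\theta_2<\pi/4$). What the paper's route buys in exchange is Proposition \ref{imp} itself, a more general statement about $U(n,1)$-congruence of real diagonal forms that is independent of the specific eigenvalue patterns arising here. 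The only soft spot in your write-up is the assertion that the field generated by the components of $\phi\circ I_{n+k,\theta_2}\circ\psi$ \emph{equals} $\mathbb{C}(z,s)[\sqrt{P_{\theta_2}\circ\psi}]$; a priori one only has containment, but that is all your deduction needs, since the left-hand field is a genuine quadratic extension, so the weaker containment already yields $\sqrt{P_{\theta_1}}=R_1+R_2\sqrt{P_{\theta_2}\circ\psi}$ and then $P_{\theta_1}=\mathrm{const}\cdot(P_{\theta_2}\circ\psi)$ exactly as you say.
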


\begin{proof}
We will merely prove the case $k=2$ and the remaining cases follow by similar argument. Let $0< \theta_2 < \theta_1 \leq \pi/4$. Then we show that $I_{n+2, \theta_1}$ and $I_{n+2, \theta_2}$ are not equivalent.

Apply the Borel embedding to embed $\mathbb{B}^n$ as an open subset of $\mathbb{P}^n$ and $D^{IV}_{n+2}$ as an open subset of $\mathbb{Q}^{n+2} \subset \mathbb{P}^{n+3}$ as before and write $[z, s] = [z_1, \cdots, z_n, s]$ to denote the homogeneous coordinates in $\mathbb{P}^n$.
Under the homogeneous coordinates, $I_{n+2, \theta}$ can be identified with
$$\mathcal{I}_{n+2, \theta}(z, s)
= \left[\phi_{1, \theta}(z, s), \cdots, \phi_{n+4, \theta}(z, s)  \right]
$$ from $\mathbb{P}^n$ to $\mathbb{P}^{n+3}$, where
\begin{equation}\notag
\begin{split}
\phi_{1, \theta}(z, s) &= \cos\theta z_1; \\
\phi_{2, \theta}(z, s) &= \sqrt{-1} \sin\theta z_1; \\
\phi_{j, \theta}(z, s) &= z_{j-1}  {\rm~for~} 3 \leq j \leq n+1 ;\\
%g_{n+1}(z, s) &=  \frac{\cos(2{\theta_1}) z_1^2 + \sum_{j=2}^{n-1} z_j^2 -2 z_n^2 +2 z_n s}{2\sqrt{2}};\\
\phi_{n+2, \theta}(z, s) &= s-\sqrt{H_\theta(z, s)}; \\
\phi_{n+3, \theta}(z, s) &= \frac{1}{\sqrt{2}} \left( 2s - \sqrt{H_\theta(z, s)} \right); \\
\phi_{n+4, \theta}(z, s) &= \frac{1}{\sqrt{-2}} \sqrt{H_\theta(z, s)},
\end{split}
\end{equation}
where $H_\theta(z, s) = s^2-\cos(2{\theta}) z^2_1 - \sum_{j=2}^{n} z_j^2$.
Note that for any $\theta \in [0, \pi/4)$ and $n \geq 2$, $H_\theta$ is irreducible and in particular, $H_\theta$ is not a perfect square of a polynomial in $(z, s)$.

By the previous argument, $I_{n+2, \theta_1}$ is equivalent to $I_{n+2, \theta_2}$ if and only if
there exist ${\bf U} \in U(n, 1)$ and $\begin{bmatrix} A & B\\ C & D\end{bmatrix} \in {\rm Aut}(D^{IV}_{n+2})$ such that \begin{equation}
\left(\phi_{1, \theta_1}, \cdots, \phi_{n+2, \theta_1}\right)((z, s) {\bf U}) \equiv \frac{ (\phi_{1, \theta_2}, \cdots, \phi_{n+2, \theta_2}) A + (\phi_{n+3, \theta_2}, \phi_{n+4, \theta_2}) C}{\left((\phi_{1, \theta_2}, \cdots, \phi_{n+2, \theta_2}) B + (\phi_{n+3, \theta_2}, \phi_{n+4, \theta_2}) D\right) (1/\sqrt{2}, \sqrt{-1/2})^t }(z, s).
\end{equation}
Consequently, one has $$\sqrt{H_{\theta_1}((z, s) {\bf U})} = R_1(z, s) \sqrt{H_{\theta_2}(z, s)} + R_2(z, s)$$ for rational functions $R_1(z, s), R_2(z, s)$. This is impossible by algebra if the following claim is true.

\medskip

{\bf Claim:} For any ${\bf U} \in U(n, 1)$, $H_{\theta_1}((z, s) {\bf U})$ and $H_{\theta_2}(z, s)$ are coprime.

\medskip

{\bf Proof of Claim:} Suppose not. Since they are both irreducible, then there exists ${\bf U} \in U(n, 1)$ such that
\begin{equation}\label{qur}
H_{\theta_1}((z, s) {\bf U}) = c H_{\theta_2}(z, s)
\end{equation} for some nonzero complex number $c$. Write $H_\theta = - (z, s) A_\theta (z, s)^t$ with $A_\theta = {\rm diag}(\cos(2\theta), 1, \cdots, 1, -1)$. Then (\ref{qur}) yields that $${\bf U} \cdot A_{\theta_1}\cdot {\bf U}^t = c \cdot A_{\theta_2}.$$ This is impossible by Proposition \ref{imp}. This finishes the proof of the claim.
\end{proof}

\begin{proposition}\label{imp}
Let $\lambda, \lambda_1, \cdots, \lambda_{n+1}$ be real numbers such that $|\lambda| < |\lambda_1| \leq \cdots \leq |\lambda_{n+1}| $ for $n \geq 1$. Then there does not exist an $(n+1)\times (n+1)$ matrix ${\bf U} \in U(n, 1)$, such that
\begin{equation}\label{xxxx}
{\bf U} \cdot {\rm diag}(\lambda, \lambda_2, \cdots, \lambda_{n+1}) \cdot {\bf U}^t = c \cdot {\rm diag}(\lambda_1, \lambda_2, \cdots, \lambda_{n+1})\end{equation} for some complex number $c$.
\end{proposition}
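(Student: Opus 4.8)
The plan is to exploit the clash between the ordinary transpose appearing in (\ref{xxxx}) and the conjugate transpose that defines $U(n,1)$. Let $J = {\rm diag}(1, \cdots, 1, -1)$ be the signature matrix, so that $\mathbf{U} \in U(n,1)$ is characterized by $\mathbf{U}^{*} J \mathbf{U} = J$, where $\mathbf{U}^{*} = \overline{\mathbf{U}}^t$ denotes the conjugate transpose; in particular $\mathbf{U}$ is invertible. Since $J = J^{-1}$, this relation rewrites as $\mathbf{U}^{*} = J \mathbf{U}^{-1} J$, that is $\overline{\mathbf{U}}^t = J \mathbf{U}^{-1} J$, and taking the entrywise complex conjugate of both sides yields the crucial bridge
$$\mathbf{U}^t = J\, \overline{\mathbf{U}}^{-1} J,$$
which expresses the ordinary transpose of a $U(n,1)$ matrix through its complex conjugate.

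First I would argue by contradiction, assuming (\ref{xxxx}) holds, and set $D_1 := {\rm diag}(\lambda, \lambda_2, \cdots, \lambda_{n+1})$ and $D_2 := {\rm diag}(\lambda_1, \lambda_2, \cdots, \lambda_{n+1})$. The hypotheses $|\lambda| < |\lambda_1| \leq \cdots \leq |\lambda_{n+1}|$ force $\lambda_1, \cdots, \lambda_{n+1} \neq 0$, so $D_2$ is invertible; and $c \neq 0$, since otherwise $\mathbf{U} D_1 \mathbf{U}^t = 0$ together with the invertibility of $\mathbf{U}$ would give $D_1 = 0$, contradicting $\lambda_{n+1} \neq 0$. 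Substituting the bridge identity into (\ref{xxxx}) and writing $P_i := D_i J$ (again diagonal), I eliminate $\mathbf{U}^t$ and obtain the semilinear relation
$$\mathbf{U} P_1 = c\, P_2 \overline{\mathbf{U}}.$$
Conjugating this gives $\overline{\mathbf{U}} P_1 = \overline{c}\, P_2 \mathbf{U}$, and plugging $\overline{\mathbf{U}} = c^{-1} P_2^{-1} \mathbf{U} P_1$ (from the previous display) back into it eliminates $\overline{\mathbf{U}}$. Since the diagonal matrices $D_i$ and $J$ commute and $J^2 = I$, one has $P_i^2 = D_i J D_i J = D_i^2 J^2 = D_i^2$, and the outcome is an honest similarity
$$\mathbf{U} D_1^2 \mathbf{U}^{-1} = |c|^2 D_2^2.$$

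The final step is to read off eigenvalues. As $D_1^2$ and $|c|^2 D_2^2$ are similar diagonal matrices, their diagonal entries agree as multisets:
$$\{\lambda^2, \lambda_2^2, \cdots, \lambda_{n+1}^2\} = \{|c|^2 \lambda_1^2, |c|^2\lambda_2^2, \cdots, |c|^2\lambda_{n+1}^2\}.$$
The ordering $\lambda^2 < \lambda_1^2 \leq \lambda_2^2 \leq \cdots \leq \lambda_{n+1}^2$ shows $\lambda_{n+1}^2$ is the largest element on the left while $|c|^2 \lambda_{n+1}^2$ is the largest on the right; equating them forces $|c| = 1$ (as $\lambda_{n+1} \neq 0$). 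But then the smallest elements must also coincide, giving $\lambda^2 = |c|^2 \lambda_1^2 = \lambda_1^2$, which contradicts $|\lambda| < |\lambda_1|$ and completes the proof.

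The hard part is precisely the transpose versus conjugate-transpose mismatch: (\ref{xxxx}) is a congruence under ordinary transpose, while membership in $U(n,1)$ is a condition on the conjugate transpose, and the two cannot be combined directly. The identity $\mathbf{U}^t = J\overline{\mathbf{U}}^{-1}J$ is exactly what couples them, and once $\mathbf{U}^t$ is removed the problem collapses to comparing the eigenvalue multisets of the real diagonal matrices $D_1^2$ and $D_2^2$, where the strict gap $|\lambda| < |\lambda_1|$ at the bottom and the shared top entry $\lambda_{n+1}^2$ do all the work. The auxiliary facts (invertibility of $D_2$, $c \neq 0$, and $P_i^2 = D_i^2$) are routine.
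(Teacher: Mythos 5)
Your proof is correct, and it takes a genuinely different route from the paper's. Your key move is to convert the transpose-congruence into an honest similarity: from the defining relation of $U(n,1)$ you extract the bridge identity ${\bf U}^t = J\overline{\bf U}^{-1}J$, substitute it into (\ref{xxxx}) to get the semilinear relation ${\bf U}P_1 = c\,P_2\overline{\bf U}$, and combine this with its complex conjugate to eliminate $\overline{\bf U}$, arriving at ${\bf U}D_1^2{\bf U}^{-1} = |c|^2D_2^2$; comparing the eigenvalue multisets of these similar diagonal matrices (the maxima force $|c|=1$, the minima then force $\lambda^2=\lambda_1^2$) gives the contradiction. Every step checks out: $D_2$, $P_2$ are invertible since $|\lambda_1|>|\lambda|\geq 0$, the case $c=0$ is correctly excluded, $P_i^2=D_i^2$ holds because diagonal matrices commute and $J^2=I$, and similar diagonal matrices do share their multiset of diagonal entries (this is where the invertibility of ${\bf U}$ enters); note also that the two common normalizations ${\bf U}^*J{\bf U}=J$ and ${\bf U}J{\bf U}^*=J$ define the same group, so your starting point is consistent with the paper's usage. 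The paper instead argues entrywise: applying Cramer's rule twice (once to the relations coming from ${\bf U}\in U(n,1)$, once to those coming from the congruence (\ref{xxxx})) and comparing norms using $|\lambda/\lambda_2|<1$, it shows that the first column of ${\bf U}$ has exactly one nonzero entry, necessarily unimodular, which forces ${\bf U}$ into the block form ${\rm diag}(e^{\sqrt{-1}\theta}, {\bf V})$; the $(1,1)$ entry then gives $|c|<1$ while the determinant of the ${\bf V}$-block gives $|c|^n\geq 1$, a contradiction. Your argument is shorter, uniform in $n$ (the paper carries out its central claim only for $n=3$ and asserts the general case is similar), and avoids all case analysis over vanishing sub-determinants; the paper's longer computation does extract extra structural information about any candidate ${\bf U}$ (the block-diagonal form), but that information is used only to reach the same contradiction you obtain directly.
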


\begin{proof}
Write $${\bf U} = \begin{bmatrix} a_1 & b_1 & c_1 & \cdots \\
a_2 & b_2 & c_2 & \cdots \\
\cdots & \cdots & \cdots & \cdots \\
a_{n+1} & b_{n+1} & c_{n+1} & \cdots  \end{bmatrix}$$ and note that $\{a_1, a_2, \cdots , a_{n+1}\}$ cannot be all zero.
\medskip

{\bf Claim:} Only one element in $\{a_1, \cdots , a_{n+1}\}$ is not zero.

\medskip

{\bf Proof of Claim:} We will merely present the proof for $n=3$ and the general case is  similar. Suppose that the claim is not true. Then any vector $(a_i, a_j, a_k)$ for $1 \leq i < j < k \leq 4$ is a nonzero vector. We now claim
\begin{equation}\label{one}
a_4 \cdot {\rm det}\overline{\begin{bmatrix} a_1 & c_1 & d_1 \\ a_2 & c_2 & d_2 \\ a_3 & c_3 & d_3\end{bmatrix}} = - b_4 \cdot {\rm det}\overline{\begin{bmatrix} b_1 & c_1 & d_1 \\ b_2 & c_2 & d_2 \\ b_3 & c_3 & d_3\end{bmatrix}},
\end{equation}
\begin{equation}\label{two}
a_4 \cdot {\rm det}\overline{\begin{bmatrix} a_1 & b_1 & d_1 \\ a_2 & b_2 & d_2 \\ a_3 & b_3 & d_3\end{bmatrix}} = c_4 \cdot {\rm det}\overline{\begin{bmatrix} b_1 & c_1 & d_1 \\ b_2 & c_2 & d_2 \\ b_3 & c_3 & d_3\end{bmatrix}}
\end{equation}
\begin{equation}\label{three}
a_4 \cdot {\rm det}\overline{\begin{bmatrix} a_1 & b_1 & c_1 \\ a_2 & b_2 & c_2 \\ a_3 & b_3 & c_3\end{bmatrix}} = d_4 \cdot {\rm det}\overline{\begin{bmatrix} b_1 & c_1 & d_1 \\ b_2 & c_2 & d_2 \\ b_3 & c_3 & d_3\end{bmatrix}}
\end{equation}
We only prove (\ref{one}) and two others are similar. Note if both ${\rm det}\begin{bmatrix} a_1 & c_1 & d_1 \\ a_2 & c_2 & d_2 \\ a_3 & c_3 & d_3\end{bmatrix}$ and ${\rm det}\begin{bmatrix} b_1 & c_1 & d_1 \\ b_2 & c_2 & d_2 \\ b_3 & c_3 & d_3\end{bmatrix}$ are zero, then (\ref{one}) holds trivially. Without loss of generality, assume ${\rm det}\begin{bmatrix} a_1 & c_1 & d_1 \\ a_2 & c_2 & d_2 \\ a_3 & c_3 & d_3\end{bmatrix} \not= 0$. The case when ${\rm det}\begin{bmatrix} b_1 & c_1 & d_1 \\ b_2 & c_2 & d_2 \\ b_3 & c_3 & d_3\end{bmatrix} \not = 0$ can be proved similarly. It follows from ${\bf U} \in U(n, 1)$ that
$$(a_4, b_4, c_4, -d_4) \begin{bmatrix}  \bar a_1 & \bar a_2 & \bar a_3 \\  \bar b_1 & \bar b_2 & \bar b_3\\ \bar c_1 & \bar c_2 & \bar c_3  \\ \bar d_1 & \bar d_2 & \bar d_3 \end{bmatrix} = (0, 0, 0).$$
This implies that $$\begin{bmatrix} \bar a_1 & \bar b_1 & \bar c_1 & \bar d_1 \\ \bar a_2 & \bar b_2 & \bar c_2 & \bar d_2 \\ \bar a_3 & \bar b_3 & \bar c_3 & \bar d_3\\ 0 & 1 & 0 & 0 \end{bmatrix} \begin{bmatrix} a_4\\ b_4\\ c_4 \\ -d_4 \end{bmatrix} = \begin{bmatrix} 0\\0\\0\\b_4  \end{bmatrix}.$$ Namely, $(a_4, b_4, c_4, -d_4)^t$ is the solution of the linear system:
$$\begin{bmatrix} \bar a_1 & \bar b_1 & \bar c_1 & \bar d_1 \\ \bar a_2 & \bar b_2 & \bar c_2 & \bar d_2 \\ \bar a_3 & \bar b_3 & \bar c_3 & \bar d_3\\ 0 & 1 & 0 & 0 \end{bmatrix} \begin{bmatrix} x_1\\ x_2\\ x_3 \\ x_4 \end{bmatrix} = \begin{bmatrix} 0\\0\\0\\b_4  \end{bmatrix}.$$ By the Cramer's rule, we know
$$a_4 = -b_4 \cdot {\rm det} \begin{bmatrix} \bar b_1 & \bar c_1 &\bar d_1 \\ \bar b_2 & \bar c_2 & \bar c_2 \\  \bar b_3 & \bar c_3 & \bar d_3  \end{bmatrix} /  {\rm det} \begin{bmatrix} \bar a_1 & \bar c_1 &\bar d_1 \\ \bar a_2 & \bar c_2 & \bar c_2 \\  \bar a_3 & \bar c_3 & \bar d_3  \end{bmatrix}.$$ This implies (\ref{one}).

We further claim:
\begin{equation}\label{one1}
\frac{\lambda}{\lambda_2} a_4 \cdot {\rm det}\begin{bmatrix} a_1 & c_1 & d_1 \\ a_2 & c_2 & d_2 \\ a_3 & c_3 & d_3 \end{bmatrix}= - b_4 \cdot {\rm det}\begin{bmatrix} b_1 & c_1 & d_1 \\ b_2 & c_2 & d_2 \\ b_3 & c_3 & d_3\end{bmatrix},
\end{equation}
\begin{equation}\label{two2}
\frac{\lambda}{\lambda_3} a_4 \cdot {\rm det}\begin{bmatrix} a_1 & b_1 & d_1 \\ a_2 & b_2 & d_2 \\ a_3 & b_3 & d_3\end{bmatrix} = c_4 \cdot {\rm det}\begin{bmatrix} b_1 & c_1 & d_1 \\ b_2 & c_2 & d_2 \\ b_3 & c_3 & d_3\end{bmatrix}
\end{equation}
\begin{equation}\label{three3}
\frac{\lambda}{\lambda_4} a_4 \cdot {\rm det}\begin{bmatrix} a_1 & b_1 & c_1 \\ a_2 & b_2 & c_2 \\ a_3 & b_3 & c_3\end{bmatrix} = - d_4 \cdot {\rm det}\begin{bmatrix} b_1 & c_1 & d_1 \\ b_2 & c_2 & d_2 \\ b_3 & c_3 & d_3\end{bmatrix}
\end{equation}
We  only prove (\ref{one1}) and two others are similar. Note again if both ${\rm det}\begin{bmatrix} a_1 & c_1 & d_1 \\ a_2 & c_2 & d_2 \\ a_3 & c_3 & d_3\end{bmatrix}$ and ${\rm det}\begin{bmatrix} b_1 & c_1 & d_1 \\ b_2 & c_2 & d_2 \\ b_3 & c_3 & d_3\end{bmatrix}$ are zero, then (\ref{two}) holds trivially. Without loss of generality, assume ${\rm det}\begin{bmatrix} a_1 & c_1 & d_1 \\ a_2 & c_2 & d_2 \\ a_3 & c_3 & d_3\end{bmatrix} \not= 0$. The case ${\rm det}\begin{bmatrix} b_1 & c_1 & d_1 \\ b_2 & c_2 & d_2 \\ b_3 & c_3 & d_3\end{bmatrix} \not = 0$ can be proved similarly. It follows from (\ref{xxxx}) that
$$(\lambda a_4, \lambda_2 b_4, \lambda_3 c_4, \lambda_4 d_4) \begin{bmatrix}  a_1 & a_2 &  a_3 \\   b_1 &  b_2 & b_3\\ c_1 &  c_2 &  c_3  \\ d_1 &  d_2 &  d_3 \end{bmatrix} = (0, 0, 0).$$
This implies:
$$\begin{bmatrix} a_1 &  b_1 & c_1 &  d_1 \\  a_2 &  b_2 & c_2 &  d_2 \\ a_3 &  b_3 &  c_3 &  d_3\\ 0 & 1 & 0 & 0 \end{bmatrix} \begin{bmatrix} \lambda a_4\\ \lambda_2 b_4\\ \lambda_3 c_4 \\ \lambda_4 d_4 \end{bmatrix} = \begin{bmatrix} 0\\0\\0\\ \lambda_2 b_4  \end{bmatrix}.$$ Hence, (\ref{one1}) follows from the Cramer's rule.

Equations (\ref{one}) and (\ref{one1}) imply that $a_4 \cdot {\rm det}\overline{\begin{bmatrix} a_1 & c_1 & d_1 \\ a_2 & c_2 & d_2 \\ a_3 & c_3 & d_3\end{bmatrix}} $ and $\frac{\lambda}{\lambda_2} a_4 \cdot {\rm det}\begin{bmatrix} a_1 & c_1 & d_1 \\ a_2 & c_2 & d_2 \\ a_3 & c_3 & d_3 \end{bmatrix}$ have the same norm. However, $|\lambda / \lambda_2 | <1$. If follows  that
\begin{equation}\label{four}
a_4 \cdot {\rm det}\begin{bmatrix} a_1 & c_1 & d_1 \\ a_2 & c_2 & d_2 \\ a_3 & c_3 & d_3 \end{bmatrix} =0.
\end{equation}
Similarly, (\ref{two}), (\ref{two2}) imply
\begin{equation}\label{five}
a_4 \cdot {\rm det}\begin{bmatrix} a_1 & b_1 & d_1 \\ a_2 & b_2 & d_2 \\ a_3 & b_3 & d_3\end{bmatrix}  =0
\end{equation} and (\ref{three}), (\ref{three3}) imply that
\begin{equation}\label{six}
a_4 \cdot {\rm det}\begin{bmatrix} a_1 & b_1 & c_1 \\ a_2 & b_2 & c_2 \\ a_3 & b_3 & c_3\end{bmatrix} = 0.\end{equation}
Note that $\begin{bmatrix} a_1 & b_1 & c_1 & d_1 \\ a_2 & b_2 & c_2 & d_2 \\ a_3 & b_3 & c_3 & d_3     \end{bmatrix}$ has rank 3 and $(a_1 , a_2, a_3)^t$ is not zero. Then ${\rm det}\begin{bmatrix} a_1 & c_1 & d_1 \\ a_2 & c_2 & d_2 \\ a_3 & c_3 & d_3 \end{bmatrix}$, ${\rm det}\begin{bmatrix} a_1 & b_1 & d_1 \\ a_2 & b_2 & d_2 \\ a_3 & b_3 & d_3\end{bmatrix} $ and ${\rm det}\begin{bmatrix} a_1 & b_1 & c_1 \\ a_2 & b_2 & c_2 \\ a_3 & b_3 & c_3\end{bmatrix}$ cannot be all zero. This together with (\ref{four})-(\ref{six}) implies that $a_4=0.$
Similar argument will yield $a_1 = a_2 = a_3=0$. This is a contradiction and the claim is thus proved.

\medskip

We now assume that $a_{j_0} \not= 0$ for some $1 \leq j_0 \leq n+1$ and all other $a_j =0$. It follows from ${\bf U} \in U(n, 1)$ that $|a_{j_0}|=1$. Write $a_{j_0} = e^{\sqrt{-1}\theta}$ for $\theta \in [0, 2\pi)$. Note ${\bf U} \in U(n, 1)$ implies ${\bf \overline{U}^t} \in U(n,1).$ Write ${\bf u}_i$ as the $i^{\text{th}}$ column of ${\bf U}.~{\bf \overline{U}^t} \in U(n,1)$ implies
\begin{equation}\label{eqnuij}
{\bf \overline{u}}_i \cdot \mathrm{diag}(1, \cdots, 1, -1) \cdot {\bf u}_j=0,~\text{if}~j \neq 1.
\end{equation}

We conclude from (\ref{eqnuij}) the $j_0$-th row of $\bf U$ is $(e^{\sqrt{-1}\theta}, 0, \cdots, 0).$ Interchange the first and $j_0$-th row of $\bf U$ and still denote the new matrix by $\bf U$.
Hence one has \begin{equation}\label{U}
{\bf U} = \begin{bmatrix} e^{\sqrt{-1}\theta} & {\bf 0}_{1 \times n} \\  {\bf 0}_{1 \times n}^t & {\bf V}  \end{bmatrix}
\end{equation}
and \begin{equation}\label{U2}
{\bf U}\cdot {\rm diag}(\lambda, \lambda_2, \cdots, \lambda_{n+1}) \cdot {\bf U}^t=c \cdot{\rm diag}(\lambda_{j_0}, \lambda_{j_1}, \cdots, \lambda_{j_{n}}),
\end{equation}
where $\{j_1, \cdots, j_n \}$ is a permutation of $\{1, \cdots, n+1\} \setminus \{j_0\}$.
It follows from (\ref{U}), (\ref{U2}) that
\begin{equation}\label{U3}
e^{2\sqrt{-1}\theta} \lambda = c \lambda_{j_0}
\end{equation}
and \begin{equation}\label{U4}
{\bf V}\cdot {\rm diag}(\lambda_2, \cdots, \lambda_{n+1}) \cdot {\bf V}^t= c \cdot {\rm diag}( \lambda_{j_1}, \cdots, \lambda_{j_{n}}).
\end{equation}
Recall that $|\lambda| < |\lambda_1| \leq \cdots \leq |\lambda_{n+1}| $. (\ref{U3}) implies that $|c| <1$. Note det$({\bf V}) =1$ as det$({\bf U})=1$. Therefore (\ref{U4}) implies
$$|c|^n = \prod_{j=2}^{n+1} |\lambda_j | / \prod_{k=1}^n|\lambda_{j_k}|  \geq 1.$$ This is a contradiction and thus the proposition is proved.
\end{proof}

\bigskip

\begin{remark}
By a similar argument as in the the proof of Theorem \ref{thmneqv}, one can show that for any $2 \leq k \leq n+2, (I_{n+k, \theta}, {\bf 0}),$ where $\theta$ varies in the given interval,  is a family  of mutually inequivalent holomorphic isometries from $\mathbb{B}^n$
to $D_{m}^{IV}, m \geq n+k.$
\end{remark}

%%%%%%%%%%%%%%%%%%%%%%%%%%%%%%%%%%%%%%%%%%%%%%%%%%%%%%%%%%%%%%%%%%
\section{Degree estimates}% and more}
In this section, we prove various degree estimate results for holomorphic isometric maps from $\mathbb{B}^n$ to $D_{m}^{IV}.$ We first introduce the following definition.

\begin{definition}
Let $F$ be a rational map from $\mathbb{C}^{n}$ into
$\mathbb{C}^{m}.$ We write
$$F=\frac{(P_{1},\cdots,P_{m})}{R}$$
where $P_{j}, j=1,\cdots,m$, and $R$ are holomorphic polynomials and $F$ is reduced to the lowest order term. The degree of $F,$ denoted by
$\mathrm{deg}(F),$ is defined to be
$$\mathrm{deg}(F): =max\{\mathrm{deg}(P_{1}), \cdots, \mathrm{deg}(P_{m}),\mathrm{deg}(R)\}.$$
\end{definition}

\begin{theorem}\label{thm1}
Assume $m \geq n+1 \geq  3$.
Let $F: \mathbb{B}^n \rightarrow D^{IV}_m$ be a rational holomorphic isometric embedding satisfying $F(0)=0.$
Then deg$(F) \leq 2$. More precisely,  $F$ is either a linear map or deg$(F)=2$.
\end{theorem}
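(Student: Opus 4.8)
The plan is to run the same linearization that underlies the first theorem of this section, and then to convert the isometry equation into a single algebraic relation for the scalar function $G := \frac{1}{2}\sum_{i=1}^m F_i^2$, from which the degree bound drops out. Normalizing by $F(0)=0$ and using $\lambda = m/(n+1)$ from Proposition 2.11 of \cite{XY}, the isometry condition reduces (exactly as before) to
\[
\sum_{j=1}^n |z_j|^2 + |G|^2 \;=\; \sum_{i=1}^m |F_i|^2 .
\]
Since $F$ is rational and holomorphic on $\mathbb{B}^n$, so is $G$, and both sides are squared norms of holomorphic maps; by the same lemma of D'Angelo \cite{D2} invoked above there is an $m\times m$ unitary matrix ${\bf V}$ with $(F_1,\ldots,F_m) = (z_1,\ldots,z_n,G,0,\ldots,0)\cdot{\bf V}$. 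In particular each $F_i$ is a fixed $\mathbb{C}$-linear combination of $z_1,\ldots,z_n$ and $G$, so once the explicit form of $G$ is known the degree of $F$ can be read off directly.

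The next step is to substitute this representation back into the definition of $G$. Writing ${\bf V}_0$ for the matrix formed by the first $n+1$ rows of ${\bf V}$ and $M := {\bf V}_0 {\bf V}_0^t$ (a symmetric $(n+1)\times(n+1)$ matrix — note the transpose, not conjugate transpose, since $FF^t$ enters), one gets
\[
2G \;=\; \sum_{i=1}^m F_i^2 \;=\; (z,G)\, M\, (z,G)^t .
\]
Decomposing $M$ into its $n\times n$ block, which defines a quadratic form $Q(z)$, its mixed entries, which define a linear form $\ell(z) = \sum_{k=1}^n M_{k,n+1} z_k$, and its corner $\mu := M_{n+1,n+1}$, this becomes the quadratic relation
\[
\mu\, G^2 + 2\big(\ell(z) - 1\big)\, G + Q(z) = 0 .
\]

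I would then split into two cases. If $\mu = 0$, the relation is linear in $G$ and gives $G = Q(z)/\big(2(1-\ell(z))\big)$, a rational function with numerator of degree $\le 2$ and denominator of degree $\le 1$; substituting into the linear expressions for the $F_i$ and clearing the common denominator $2(1-\ell)$ shows $\deg F \le 2$. If $\mu \ne 0$, completing the square yields
\[
\big(\mu G + \ell(z) - 1\big)^2 \;=\; \big(\ell(z)-1\big)^2 - \mu\, Q(z) .
\]
The right-hand side is a polynomial of degree $\le 2$; since $G$ is rational, its square root is rational, which forces this polynomial to be a perfect square of a polynomial of degree $\le 1$. Hence $\mu G + \ell - 1$ is a polynomial of degree $\le 1$, so $G$ itself is a polynomial of degree $\le 1$. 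As $G(0)=0$ this makes $G$ homogeneous linear, whence the $F_i$ are homogeneous linear and $G = \frac12\sum_i F_i^2$ is homogeneous quadratic; the two are compatible only if $G \equiv 0$, i.e. $F$ is linear. Combining the cases gives $\deg F \le 2$, and since an isometric embedding is nonconstant, $\deg F \in \{1,2\}$, which is exactly the stated dichotomy.

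The main obstacle is the case $\mu \ne 0$: here one must use rationality of $F$ — not merely that it is a holomorphic isometry — in an essential way, first to conclude that the discriminant $(\ell-1)^2 - \mu Q$ is a genuine square of a polynomial, and then to reconcile the resulting bound $\deg G \le 1$ with the defining identity $G = \frac12\sum_i F_i^2$ so as to eliminate the spurious low-degree branch. The only other point requiring care is the bookkeeping in forming $M = {\bf V}_0 {\bf V}_0^t$, since the relevant quadratic expression is $FF^t$ rather than $F\overline{F}^t$, so that $M$ is symmetric rather than Hermitian.
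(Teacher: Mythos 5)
Your argument follows the paper's own route almost verbatim up to the quadratic relation: the identity $\sum_{j=1}^n|z_j|^2+|G|^2=\sum_{i=1}^m|F_i|^2$, the application of D'Angelo's lemma, and the relation $\mu G^2+2(\ell(z)-1)G+Q(z)=0$ are exactly the paper's (\ref{eqtnorm}), (\ref{eqt6}) and (\ref{quadratic1}), with its $a$, $p$, $q$ equal to your $\mu$, $2\ell$, $Q$. Your treatment of the case $\mu\neq 0$ is correct and in fact slightly cleaner than the paper's: instead of showing that the discriminant cannot be a perfect square when $Q\not\equiv 0$, you allow the square root to be a polynomial of degree at most one and then kill $G$ by the homogeneity clash between $G$ being homogeneous linear and $G=\frac{1}{2}\sum_i F_i^2$ being homogeneous quadratic. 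The bound $\deg(F)\le 2$ in the case $\mu=0$ is also correct.

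The genuine gap is in your last sentence. The clause ``$F$ is either a linear map or $\deg(F)=2$'' is strictly stronger than ``$\deg(F)\in\{1,2\}$'': a degree-one rational map need not be a linear map (components of the form $L_j(z)/(1-\ell(z))$ have degree one but are not linear), so nonconstancy together with $\deg(F)\le 2$ does not give the stated dichotomy. What is missing is precisely the second half of the paper's proof: when $\mu=0$ and $Q\not\equiv 0$, so that $G=Q/(2(1-\ell))\not\equiv 0$, one must show $\deg(F)$ is exactly $2$, i.e.\ that after reduction to lowest terms some component $F_j=\bigl(u_jQ/2+(1-\ell)L_j\bigr)/(1-\ell)$ genuinely retains a quadratic numerator. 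The paper does this by a rank argument: if every numerator had degree at most one, then $u_jQ/2=\ell L_j$ for all $j$; if $\ell\equiv 0$ this forces $Q\equiv 0$ (some $u_j\neq 0$, the $u_j$ being a row of a unitary matrix), and if $\ell\not\equiv 0$ it forces every $L_j$ to be a multiple of one fixed linear form, so that the $n\times m$ coefficient block of the $L_j$ --- a block of a unitary matrix, hence of rank $n\ge 2$ --- would have rank at most one; either way a contradiction. One also checks that $1-\ell$ cannot divide the homogeneous polynomial $Q$, so the degree does not drop upon reduction. Note that this is exactly where the hypothesis $n+1\ge 3$, i.e.\ $n\ge 2$, enters the theorem; your proposal never invokes it, which is the telltale sign of the omission.
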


\begin{proof}
Write $F=(f_1, \cdots, f_m)$ and $h=
\frac{\sum_{i=1}^{m} f_i^2}{2}$. It follows from the isometry assumption that
\begin{equation}\label{eqtnorm}
\sum_{j=1}^m |f_j(z)|^2 = \sum_{j=1}^n |z_j|^2 + |h(z)|^2.
\end{equation}
By a lemma of D'Angelo \cite{D2}, there exists a unitary matrix ${\bf U} = (u_{ij}) \in M(m, m; \mathbb{C})$ such that
\begin{equation}\label{eqt6}
\left(\frac{1}{2}\sum_{j=1}^m f^2_j(z), z_1, \cdots, z_n, 0, \cdots, 0 \right) \cdot {\bf U} = (f_1(z), \cdots, f_m(z)).
\end{equation}
%Write $F=(f_1, \cdots, f_{n+1})$
%It follows from equation (\ref{eqtnorm}) or (\ref{eqt6}) that \begin{equation}\label{albert}
%\left(\frac{1}{2}\sum_{j=1}^{n+1} f^2_j(z), z_1, \cdots, z_n \right) \cdot {\bf U} = (f_1(z), \cdots, f_{n+1}(z))
%\end{equation} for a unitary matrix ${\bf U} = (u_{ij}) \in M(n+1, n+1; \mathbb{C})$.
%Let $ z_j= 0$ for $n+2 \leq j \leq m$.
Equation (\ref{eqt6}) reads
\begin{equation}\label{albert1}
f_j(z) = u_{1 j} h(z) + \sum_{i=1}^n  u_{i+1, j} z_i
\end{equation} for all $1 \leq j \leq m$. Take the sum of square of the above equations for all $j$, we conclude that
\begin{equation}\label{quadratic}
\begin{split}
2 h &= \left( \sum_{j=1}^{m} u^2_{1 j}\right) h^2 + 2 h\sum_{j=1}^{m} \left( u_{1 j} \sum_{i=1}^n  u_{i+1, j} z_i \right) +\sum_{j=1}^{m} \left(\sum_{i=1}^n  u_{i+1, j} z_i \right)^2 \\
&= \left( \sum_{j=1}^{m} u^2_{1 j}\right) h^2 + 2 h\sum_{j=1}^{m} \left( u_{1 j} \sum_{i=1}^n  u_{i+1, j} z_i \right) + \sum_{i=1}^n \left( \sum_{j=1}^{m}  u^2_{i+1, j} \right) z^2_i + 2 \sum_{i \not= i'} \left( \sum_{j=1}^{m} u_{i+1, j} u_{i'+1, j} \right) z_i z'_{i}.
\end{split}
\end{equation}
This is a quadratic equation
\begin{equation}\label{quadratic1}
a h^2(z) + (p(z)-2) h(z) + q(z) =0
\end{equation}
 where $a =\sum_{j=1}^{n+1} u^2_{1 j}$ and $p(z)$ and $q(z)$ are, if not identically zero, homogeneous polynomials in $z$ of degree 1 and 2 respectively. Note that $h$ is rational.  We now prove that  $h$ must be a rational function of degree 2 in $z$ or identically 0.

When $a\not=0$, we split into two cases: $q(z) \equiv 0$ and $q(z) \not \equiv 0.$ If $q(z) \not \equiv 0,$ then by the quadratic formula,
$$h(z)=\frac{2-p(z) \pm \sqrt{(p(z)-2)^2-2q(z)}}{2a}.$$
This is a contradiction  as $(p(z)-2)^2-4q(z)$ cannot be a perfect sqaure of a polynomial. If $q(z) \equiv 0,$ then we conclude by (\ref{quadratic1}) that $h(z) \equiv 0$ as $h(0)=0.$ When $a=0$, then $h=\frac{q(z)}{2-p(z)}.$ It must be either identically zero or a rational function of degree 2 in $z$. Here note $p(z)-2$
and $q(z)$ must be coprime if $q(z) \not \equiv 0.$

If $h \equiv 0$, then  $f_j$ are linear polynomials for all $j$ by (\ref{albert1}).
Now assume $h \not\equiv 0.$ This corresponds to $a=0$ and $h(z) =\frac{q(z)}{2 - p(z)}$ for $q \not\equiv 0$. We will show that deg$(F)=2$. If $p \equiv 0$, it is trivially true by (\ref{albert1}). If $p \not \equiv 0$, again by (\ref{albert1}),
$$f_j(z) = \frac{-u_{1j} q(z)+p(z) \left( \sum_{i=1}^n  u_{i+1, j} z_i\right)-2 \left( \sum_{i=1}^n  u_{i+1, j} z_i \right) }{p(z)-2} ,$$ denoted by $\frac{N_j(z)}{p(z)-2}$ for $1\leq j \leq m$. We claim that there exists at least one $j_0$ such that deg$(N_{j_0}(z))=2$. This claim will imply deg$(f_{j_0}(z))=2$ as $q$ cannot be divided by $p-2$. We now give a proof of the claim.
 Suppose deg$(N_j(z))=1$ for all $1 \leq j \leq m$. This is equivalent to
 \begin{equation}\label{al}
 u_{1j} q(z) = p(z) \left( \sum_{i=1}^n  u_{i+1, j} z_i \right),~\text{for all}~1 \leq j \leq m.
\end{equation}
Assume $n \geq 2$. Then the matrix $\begin{bmatrix} u_{21}, \cdots, u_{2 m}\\ \cdots, \cdots, \cdots \\ u_{(n+1)1}, \cdots, u_{(n+1) m} \end{bmatrix}$ is of rank equal to $n \geq 2$. Therefore, there exists $1 \leq j_1 < j_2 \leq m$ such that $\begin{bmatrix} u_{2  j_1} \\ \cdots \\ u_{(n+1) j_1} \end{bmatrix}$ and $\begin{bmatrix} u_{2  j_2} \\ \cdots \\ u_{(n+1) j_2}\end{bmatrix}$ are linearly independent and thus $$ \sum_{i=1}^n  u_{i+1, j_1} z_i  \not\equiv 0, \sum_{i=1}^n  u_{i+1, j_2} z_i \not\equiv 0,  \left( \sum_{i=1}^n  u_{i+1, j_1} z_i, \sum_{i=1}^n  u_{i+1, j_2} z_i \right) =1.$$
It follows from (\ref{al}) that $u_{1 j_1} \not= 0, u_{1 j_2} \not=0$ and moreover, $$q(z) = \frac{p(z)  \left( \sum_{i=1}^n  u_{i+1, j_1} z_i \right) }{u_{1 j_1}} =  \frac{p(z)  \left( \sum_{i=1}^n  u_{i+1, j_2} z_i \right) }{u_{1 j_2}}.$$
This contradicts to the linear independence of $\begin{bmatrix} u_{2  j_1} \\ \cdots \\ u_{(n+1) j_1} \end{bmatrix}$ and $\begin{bmatrix} u_{2  j_2} \\ \cdots \\ u_{(n+1) j_2}\end{bmatrix}$.

Therefore we proved that either deg$(F) =2$ or $F$ is a homogeneous linear polynomial map when $n \geq 2$.
\end{proof}

We have a more precise result for $m < 2n.$
\begin{theorem}\label{thm2}
Assume $3 \leq n+1 \leq m < 2n.$
Let $F: \mathbb{B}^n \rightarrow D^{IV}_m$ be a rational holomorphic isometric embedding satisfying $F(0)=0.$
Then deg$(F)=2.$
\end{theorem}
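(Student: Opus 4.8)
The plan is to invoke Theorem \ref{thm1}, which already establishes the dichotomy that $F$ is either a homogeneous linear map or $\deg(F)=2$. Thus the entire content of the present statement is to rule out the linear case under the stronger hypothesis $m<2n$. Accordingly, I would assume for contradiction that $F=(f_1,\dots,f_m)$ is a homogeneous linear map and aim to derive $m\geq 2n$.

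Keeping the notation of the proof of Theorem \ref{thm1}, set $h=\frac{1}{2}\sum_{i=1}^m f_i^2$, so that the isometry identity (\ref{eqtnorm}) reads $\sum_{j=1}^m|f_j(z)|^2=|z|^2+|h(z)|^2$. If $F$ is linear, each $f_j$ is a homogeneous linear form and $h$ is a homogeneous quadratic polynomial. The left-hand side and the term $|z|^2$ are then Hermitian forms of bidegree $(1,1)$ in $(z,\bar z)$, while $|h|^2$ is of bidegree $(2,2)$. Comparing the two sides according to bidegree forces $|h|^2\equiv 0$, hence $h\equiv 0$, and simultaneously $\sum_{j=1}^m|f_j(z)|^2=|z|^2$. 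First I would record these two identities.

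The core step, and the main obstacle, is the linear-algebra consequence of these two identities. Write $f_j(z)=\sum_{i=1}^n a_{ij}z_i$ and let $v_1,\dots,v_n\in\mathbb{C}^m$ be the rows of the matrix $A=(a_{ij})$. The identity $\sum_j|f_j|^2=|z|^2$ says the $v_i$ are orthonormal for the Hermitian inner product, i.e. $A\overline{A}^t=I_n$, while $h\equiv 0$, that is $\sum_j f_j^2\equiv 0$, says $v_i\cdot v_k=0$ for all $i,k$ with respect to the complex-bilinear symmetric form, including the self-orthogonality $v_i\cdot v_i=0$ (equivalently $AA^t=0$). Decomposing $v_i=x_i+\sqrt{-1}\,y_i$ with $x_i,y_i\in\mathbb{R}^m$ and separating the real and imaginary parts of $\langle v_i,v_k\rangle=\delta_{ik}$ and $v_i\cdot v_k=0$, I expect to obtain $x_i\cdot x_k=y_i\cdot y_k=\frac{1}{2}\delta_{ik}$ together with $x_i\cdot y_k=0$ for all $i,k$. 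Consequently $\{x_1,\dots,x_n,y_1,\dots,y_n\}$ is a set of $2n$ mutually orthogonal nonzero vectors in $\mathbb{R}^m$, hence linearly independent, which forces $m\geq 2n$.

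This contradicts the hypothesis $m<2n$, so $F$ cannot be linear; by Theorem \ref{thm1} it must then satisfy $\deg(F)=2$. The only genuinely computational ingredient is the real/imaginary separation in the third paragraph, which is a short bookkeeping of the four real bilinear identities extracted from $\langle v_i,v_k\rangle=\delta_{ik}$ and $v_i\cdot v_k=0$; everything else follows at once from Theorem \ref{thm1} and a bidegree comparison.
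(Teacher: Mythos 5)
Your proposal is correct and follows essentially the same route as the paper's proof: both use Theorem \ref{thm1} to reduce to excluding the linear case, obtain $\sum_{j=1}^m f_j^2\equiv 0$ and $\sum_{j=1}^m|f_j|^2=|z|^2$ by comparing (bi)degrees in the isometry identity, and then extract from the real and imaginary parts of the row vectors a family of $2n$ mutually orthogonal nonzero vectors in $\mathbb{R}^m$, contradicting $m<2n$. The only cosmetic difference is that the paper phrases the Hermitian orthonormality via the unitary matrix supplied by D'Angelo's lemma (its equation (\ref{eqt8})), whereas you read it off directly from the coefficient matrix of the linear map; the computation and conclusion are identical.
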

\begin{proof}
By Theorem \ref{thm1}, we have ${\rm deg}(F)=1$ or $2.$ Hence we just need to show ${\rm deg}(F)$ cannot be $1.$ We will prove by seeking a contradiction. Suppose ${\rm deg}(F)=1.$ By the argument in Theorem \ref{thm1}, each $f_{i}, 1 \leq i \leq m,$
is a homogeneous linear polynomial in $z.$ Then $\sum_{j=1}^m f^2_j(z)$ will be a homogeneous quadratic polynomial in $z.$ By collecting terms of degree $4$ on both sides of (\ref{eqtnorm}), we have $\sum_{j=1}^m f^2_j(z)=0.$ Equation (\ref{eqt6}) is then reduced to,
\begin{equation}\label{eqt7}
\left(z_1, \cdots, z_n, 0, \cdots, 0 \right){\bf V} = \left(f_1(z), \cdots, f_m(z)\right).
\end{equation}
Here ${\bf V}=\left(
                \begin{array}{c}
                  {\bf v_{1}} \\
                  ... \\
                  {\bf v_{m}} \\
                \end{array}
              \right)
$
is an $m \times m$ unitary matrix, ${\bf v_{i}}$ is an $m-$dimensional row vector, $1 \leq i \leq m.$ We rewrite (\ref{eqt7}) as
\begin{equation}\label{eqt8}
\left(z_{1},...,z_{n}\right)\left(
                              \begin{array}{c}
                                {\bf v}_{1} \\
                                ... \\
                                {\bf v}_{n} \\
                              \end{array}
                            \right)=\left(f_{1},...,f_{m}\right).
\end{equation}
The fact that $\sum_{j=1}^m f^2_j(z)=0,$ implies
\begin{equation}\label{eqt9}
{\bf v}_{i} \cdot {\bf v}_{j}=0,~\text{for all}~1 \leq i,j \leq n.
\end{equation}
As ${\bf V}$ is an unitary matrix, we have,
\begin{equation}\label{eqt10}
{\bf v}_{i} \cdot \overline{\bf v}_{j}=0,~\text{for all}~1 \leq i \neq j \leq n;
\end{equation}
\begin{equation}\label{eqtnormvi}
{\bf v}_{i} \cdot \overline{\bf v}_{i}=1, 1 \leq i \leq n.
\end{equation}
It follows from equations (\ref{eqt9}), (\ref{eqt10}) and (\ref{eqtnormvi}) that
\begin{equation}\label{eqtviReIm}
{\rm Re}{\bf v}_{i} \cdot {\rm Im}{\bf v}_{i}=0,  {\rm Re}{\bf v}_{i} \cdot {\rm Re}{\bf v}_{i}={\rm Im}{\bf v}_{i} \cdot {\rm Im}{\bf v}_{i}=\frac{1}{2}, 1 \leq i \leq n.
\end{equation}
\begin{equation}\label{eqtReIm}
{\rm Re}{\bf v}_{i} \cdot {\rm Im}{\bf v}_{j}=0~\text{for all}~1 \leq i,j \leq n.
\end{equation}
We thus get a collection of $2n$ mutually orthogonal nonzero real vectors $\{{\rm Re}{\bf v}_{i}, {\rm Im}{\bf v}_{i}\}_{i=1}^{n}$ in $\mathbb{C}^m.$ This contradicts the assumption that $m < 2n,$ thus establishes Theorem \ref{thm2}.
\end{proof}

\begin{remark}\label{lin}
We remark that the assumption $m<2n$ is optimal in Theorem \ref{thm2}. Indeed, when
$m=2n,$ we have a linear map $F: \mathbb{B}^n \rightarrow D^{IV}_{2n}:$
$$F(z)=(\frac{\sqrt{2}}{2}z_{1}, \frac{\sqrt{-2}}{2}z_{1},...,\frac{\sqrt{2}}{2}z_{n}, \frac{\sqrt{-2}}{2}z_{n}).$$
\end{remark}

Furthermore, we have the following rigidity result for holomorphic rational isometric map of degree one.
\begin{proposition}
Assume $m \geq  2n$ and $n \geq 2$.
Let $F: \mathbb{B}^n \rightarrow D^{IV}_m$ be a rational holomorphic isometric embedding satisfying $F(0)=0.$
Assume that ${\rm deg}(F)=1.$ Then $F$ is a totally geodesic embedding that is isotropically equivalent to
\begin{equation}
\left(\frac{\sqrt{2}}{2}z_{1}, \frac{\sqrt{-2}}{2}z_{1},...,\frac{\sqrt{2}}{2}z_{n}, \frac{\sqrt{-2}}{2}z_{n}, 0,...,0\right).
\end{equation}
\end{proposition}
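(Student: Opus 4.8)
The plan is to extract from the degree-one hypothesis the explicit linear-algebraic shape of $F$, and then to normalize that shape by a single orthogonal automorphism of $D^{IV}_m$ lying in the isotropy group $K_0$.

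First I would record what $\deg(F)=1$ already forces. Running the computation in the proof of Theorem \ref{thm2} up to, but \emph{not} including, the dimension count, the map must be homogeneous linear with $\sum_{j=1}^m f_j^2\equiv 0$, so that $F(z)=(z_1,\dots,z_n)\,N$, where $N$ is the $n\times m$ matrix with rows $\mathbf v_1,\dots,\mathbf v_n$. The relations (\ref{eqtviReIm}) and (\ref{eqtReIm}) then say precisely that $\{\mathrm{Re}\,\mathbf v_i,\ \mathrm{Im}\,\mathbf v_i\}_{i=1}^n$ is a family of $2n$ mutually orthogonal vectors in $\mathbb R^m$, each of squared length $\tfrac12$. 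The hypothesis $m\ge 2n$ is exactly what makes such a configuration possible, so no contradiction arises here and we may instead normalize it.

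Next I would build the normalizing automorphism. Scaling by $\sqrt2$, the vectors $\{\sqrt2\,\mathrm{Re}\,\mathbf v_i,\ \sqrt2\,\mathrm{Im}\,\mathbf v_i\}$ form an orthonormal system in $\mathbb R^m$; since $m\ge 2n$, I extend it to an orthonormal basis and let $A\in O(m,\mathbb R)$ be the real orthogonal matrix sending, under right multiplication, $\sqrt2\,\mathrm{Re}\,\mathbf v_i\mapsto e_{2i-1}$ and $\sqrt2\,\mathrm{Im}\,\mathbf v_i\mapsto e_{2i}$ for each $i$. The key observation is that $\begin{bmatrix}A & 0\\ 0 & I_2\end{bmatrix}\in K_0$: it lies in $O(m,2,\mathbb R)$ with $\det(I_2)=1$, and by the denominator computation in Section 2 its action on $D^{IV}_m$ is simply $\varphi_A\colon Z\mapsto ZA$ (the denominator evaluates to $1$). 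Applying $\varphi_A$, the new rows become $\mathbf v_iA=\tfrac{1}{\sqrt2}\bigl(e_{2i-1}+\sqrt{-1}\,e_{2i}\bigr)$, so $\varphi_A\circ F$ has component $\tfrac{\sqrt2}{2}z_i$ in slot $2i-1$, component $\tfrac{\sqrt{-2}}{2}z_i$ in slot $2i$, and $0$ beyond slot $2n$; that is, $\varphi_A\circ F=F_0$, the stated standard map of Remark \ref{lin} padded with zeros. Hence $F=\varphi_A^{-1}\circ F_0$ is isotropically equivalent to $F_0$.

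Finally, total geodesy follows from the equivalence together with that of $F_0$: on the image of $F_0$ one has $ZZ^t\equiv 0$ and $Z\overline Z^t=|w|^2$, so $F_0$ realizes $\mathbb B^n$ as the standard linear isometric slice of $D^{IV}_m$, which is totally geodesic (checked directly, e.g.\ by verifying that the second fundamental form vanishes, or recognized as the canonical totally geodesic linear embedding), and total geodesy is invariant under composition with automorphisms. I expect the only genuinely delicate points to be the two verifications I have isolated: confirming that $\begin{bmatrix}A & 0\\ 0 & I_2\end{bmatrix}$ acts as $Z\mapsto ZA$ with trivial denominator (reducing to the Section 2 formula), and the total geodesy of the explicit $F_0$; the index bookkeeping pairing $\mathrm{Re}\,\mathbf v_i,\mathrm{Im}\,\mathbf v_i$ to slots $2i-1,2i$, and everything else, is routine linear algebra.
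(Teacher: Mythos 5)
Your proposal is correct and follows essentially the same route as the paper: reduce via the arguments of Theorems \ref{thm1} and \ref{thm2} to a homogeneous linear map $F=(z_1,\dots,z_n)\,N$ whose rows have mutually orthogonal real and imaginary parts of length $1/\sqrt2$, extend $\{\sqrt2\,\mathrm{Re}\,\mathbf v_i,\sqrt2\,\mathrm{Im}\,\mathbf v_i\}$ to an orthonormal basis of $\mathbb{R}^m$, and right-multiply by the resulting real orthogonal matrix to reach the standard form. Your write-up is in fact slightly more complete than the paper's, since you explicitly verify that $\begin{bmatrix}A&0\\0&I_2\end{bmatrix}\in K_0$ acts as $Z\mapsto ZA$ with trivial denominator and you address the total geodesy of the normal form, both of which the paper leaves implicit.
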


\begin{proof}
Recall from the proof of Theorem \ref{thm1} and \ref{thm2}, we have if ${\rm deg}(F)=1,$ then
$F$ is a homogeneous linear map. More precisely, there is an $m \times m$ unitary matrix $V=\left(
     \begin{array}{c}
       {\bf v}_{1} \\
       ... \\
       {\bf v}_{m} \\
     \end{array}
   \right)
$ such that equations (\ref{eqt7})-(\ref{eqtReIm}) hold.

We write ${\bf a}_{i}={\rm Re}{\bf v}_{i}, {\bf b}_{i}={\rm Im}{\bf v}_{i}, 1 \leq i \leq n,$  and write the $2n \times m$ matrix,
$${\bf C}=\sqrt{2}\left(
            \begin{array}{c}
              {\bf a}_{1} \\
              {\bf b}_{1} \\
              ... \\
              ... \\
              {\bf a}_{n} \\
              {\bf b}_{n} \\
            \end{array}
          \right).
$$
As a consequence of (\ref{eqtviReIm}), (\ref{eqtReIm}), we have
$${\bf C}{\bf C}^t={\bf I}_{n}.$$
We extend $\{\sqrt{2}{\bf a}_{j}, \sqrt{2}{\bf b}_{j}\}_{j=1}^n$ to an orthonormal basis $\{\sqrt{2}{\bf a}_{1}, \sqrt{2}{\bf b}_{1},...,\sqrt{2}{\bf a}_{n}, \sqrt{2}{\bf b}_{n}, {\bf c}_{2n+1},...,{\bf c}_{m}\}$
of $\mathbb{R}^m.$
We then set $\widetilde{\bf C}$ to be the $m \times m$ matrix,
$$\widetilde{\bf{C}}^t=\left(
                         \begin{array}{ccc}
                         \sqrt{2}{\bf a}_{1}\\
                         \sqrt{2}{\bf b}_{1}\\
                         ...\\
                         ... \\
                         \sqrt{2}{\bf a}_{n} \\
                         \sqrt{2}{\bf b}_{n} \\
                         {\bf c}_{2n+1} \\
                         ... \\
                         {\bf c}_{m} \\
                       \end{array}
                     \right),
$$
then
\begin{equation}\label{eqntildec}
\widetilde{\bf{C}}\widetilde{\bf{C}}^t={\bf I}_{m}.
\end{equation}
That is $\widetilde{\bf{C}} \in O(m, \mathbb{C}).$
We now define
$$\widetilde{F}=F\widetilde{\bf C}^t.$$
Then $\widetilde{F}$ is orthogonal equivalent to $F.$ Moreover,

\begin{equation}
\widetilde{F}=\left(z_{1},...,z_{n}\right)\left(
                              \begin{array}{c}
                                {\bf v}_{1} \\
                                ... \\
                                {\bf v}_{n} \\
                              \end{array}
                            \right)\widetilde{\bf C}^t=
\left(z_{1},...,z_{n}\right)\left(
\begin{array}{c}
{\bf a}_{1}+\sqrt{-1}{\bf b}_{1} \\
... \\
{\bf a}_{n}+\sqrt{-1}{\bf b}_{n} \\
\end{array}
\right)\widetilde{\bf C}^t.
\end{equation}
It then follows from (\ref{eqntildec}) that
$$\widetilde{F}=\left(\frac{\sqrt{2}}{2}z_{1}, \frac{\sqrt{-2}}{2}z_{1},...,\frac{\sqrt{2}}{2}z_{n}, \frac{\sqrt{-2}}{2}z_{n}, 0,...,0 \right).$$
\end{proof}

\section{Holomorphic maps to classical domains}
In this section we construct  proper holomorphic maps  from $\mathbb{B}^n$ to an irreducible classical domains $\Omega$. If $n < n_\Omega$, our construction gives many examples of  non-isometric proper maps. If $n = n_\Omega$, our examples become non-totally geodesic isometric maps. The existence of these non-totally geodesic holomorphic isometries was first discovered by Mok \cite{M6}.

\subsection{Type I domains}
Let $q \geq p$.
We recall that the type I domain is defined by
$$D^I_{p, q} = \{Z \in M(p, q; \mathbb{C}) | I_p -  Z \overline{Z}^t >0 \}$$ and
the Bergman kernel is given by 
$$K(Z, \bar Z) = c_I \left(\det(I_p -  Z \overline{Z}^t) \right)^{-(p+q)},$$
for some constant $c_I$ depending on $p,q.$
The boundary of $D_{p,q}^I$ is contained in 
 $$\{Z \in M(p,q; \mathbb{C})| \det(I_p-Z \overline{Z}^t)=0\}.$$
We will need the following lemma in algebra(cf. \cite{H2}).  We will denote by  $Z(\begin{matrix}
 i_{1} & ... & i_{k} \\
 j_{1} & ... & j_{k}
   \end{matrix}
)$
the determinant of the submatrix of $Z$ formed by its $i_{1}^{\text{th}},...,i_{k}^{\text{th}}$ rows and $j_{1}^{\text{th}},...,j_{k}^{\text{th}}$ columns.

\begin{lemma}\label{lemmazk}
Let $I_{p}$ be the $p \times p$ identity matrix $(p \geq 1),$ Z be a matrix as above.
\begin{equation}
\mathrm{det}( I_{p}- Z\overline{Z^t})= 1 + \sum_{k=1}^p (-1)^k\left( \sum_{1 \leq i_{1}<i_{2}<...< i_{k}\leq p,\\ 1 \leq j_{1}< j_{2}<...< j_{k} \leq q}\left|
 Z(\begin{matrix}
 i_{1} & ... & i_{k} \\
 j_{1} & ... & j_{k}
   \end{matrix}
)\right|^2\right).
\end{equation}
\end{lemma}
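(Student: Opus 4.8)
The plan is to reduce the identity to two classical facts: the principal-minor expansion of $\det(I_p - M)$ for an arbitrary matrix $M$, and the Cauchy--Binet formula. Set $M := Z\overline{Z}^t$, a Hermitian $p\times p$ matrix. First I would recall that for any $p\times p$ matrix $M$ one has
$$\det(I_p - M) = \sum_{k=0}^{p} (-1)^k \sum_{\substack{S \subseteq \{1,\dots,p\} \\ |S|=k}} \det M[S],$$
where $M[S]$ denotes the principal $k\times k$ submatrix of $M$ with rows and columns indexed by $S$. This is just the statement that the coefficients of the characteristic polynomial $\det(tI_p - M)$ are, up to sign, the elementary symmetric functions of the eigenvalues, which equal the sums of the principal $k\times k$ minors of $M$. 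The $k=0$ term is the empty minor, equal to $1$, and produces the leading constant in the lemma.

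Next I would compute each principal minor. Writing $Z_S$ for the $k\times q$ submatrix of $Z$ consisting of the rows of $Z$ indexed by $S = \{i_1 < \cdots < i_k\}$, one checks directly from the definition of matrix multiplication that $M[S] = Z_S \overline{Z_S}^t$. I would then apply the Cauchy--Binet formula to the product of the $k\times q$ matrix $Z_S$ with the $q\times k$ matrix $\overline{Z_S}^t$:
$$\det\!\left(Z_S \overline{Z_S}^t\right) = \sum_{\substack{T \subseteq \{1,\dots,q\} \\ |T|=k}} \det\!\big(Z[S,T]\big)\, \overline{\det\!\big(Z[S,T]\big)} = \sum_{1 \le j_1 < \cdots < j_k \le q} \left| Z\!\begin{pmatrix} i_1 & \cdots & i_k \\ j_1 & \cdots & j_k \end{pmatrix} \right|^2,$$
where $Z[S,T]$ is the $k\times k$ submatrix of $Z$ with rows $S$ and columns $T$. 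The point requiring care is the conjugation: the $T$-th maximal minor of $\overline{Z_S}^t$ is exactly the complex conjugate of the $T$-th maximal minor of $Z_S$, so that each Cauchy--Binet summand is a genuine modulus squared rather than a mixed product.

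Finally I would substitute this back and collect terms. Summing over all $S$ with $|S|=k$ converts the single index set $S$ into the full double sum over $i_1 < \cdots < i_k$ and $j_1 < \cdots < j_k$ appearing in the statement, yielding
$$\det(I_p - Z\overline{Z}^t) = 1 + \sum_{k=1}^{p} (-1)^k \sum_{\substack{1\le i_1<\cdots<i_k\le p \\ 1\le j_1<\cdots<j_k\le q}} \left| Z\!\begin{pmatrix} i_1 & \cdots & i_k \\ j_1 & \cdots & j_k \end{pmatrix} \right|^2,$$
which is exactly the claimed formula. Since both ingredients are standard, I do not expect a serious obstacle here; the only genuinely delicate step is the bookkeeping of the index sets together with the verification that the minors of $\overline{Z_S}^t$ pair with those of $Z_S$ to produce moduli squared.
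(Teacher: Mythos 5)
Your proof is correct, and it is worth noting that the paper itself contains no proof of this lemma at all: it is quoted as a known algebraic fact from Hua's book \cite{H2}, so your argument supplies what the paper omits rather than paralleling or diverging from an in-paper argument. Your route is the standard derivation: the expansion $\det(I_p-M)=\sum_{k=0}^{p}(-1)^{k}\sum_{|S|=k}\det M[S]$ (coefficients of the characteristic polynomial as sums of principal $k\times k$ minors), specialized to the Hermitian matrix $M=Z\overline{Z}^{t}$, followed by the Cauchy--Binet formula applied to each principal minor. The individual steps all check out: the identification $M[S]=Z_{S}\overline{Z_{S}}^{t}$ is immediate from the definition of the matrix product; the maximal minor of $\overline{Z_{S}}^{t}$ with rows indexed by $T$ equals the complex conjugate of the minor of $Z_{S}$ with columns indexed by $T$, since transposition preserves determinants and conjugation commutes with them, so each Cauchy--Binet summand is indeed $\bigl|\det Z[S,T]\bigr|^{2}$; and summing over the two index sets $S$ and $T$ of equal size $k$ reproduces exactly the double sum in the statement, with $S=\emptyset$ contributing the leading $1$. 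One small remark: the paper's standing assumption $q\geq p$ guarantees every inner sum over column indices is nonempty, but your argument does not even need this, since for $k>q$ the principal minor $\det\bigl(Z_{S}\overline{Z_{S}}^{t}\bigr)$ vanishes by rank considerations while the corresponding Cauchy--Binet sum is empty, so the identity holds in either convention.
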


Write $z$ as the coordinates in $\mathbb{C}^{n}.$ Let $G(z)$ be a proper holomorphic map from $\mathbb{B}^n$ to $\mathbb{B}^{p+q-1}, q \geq  p \geq 2, p+q-1 \geq n,$ with $G(0)=0.$
Write $G=(g_1,...,g_q, h_2,...,h_p).$ We define a map $H_G$ from $\mathbb{B}^n$ to $M(p,q;\mathbb{C})$ associated to $G$ as follows.
\begin{equation}\label{eqnhgtype1}
H_G(z)=\left(
    \begin{array}{cccc}
      g_{1} & g_{2} & ... & g_{q} \\
      h_{2} & f_{22} & ... & f_{2q} \\
      ... & ... & ... & ... \\
      h_{p} & f_{p2} & ... & f_{pq} \\
    \end{array}
  \right),
\end{equation}
where $$f_{ij}= \frac{h_{i}g_{j}}{g_{1}-1}, 2 \leq i \leq p, 2 \leq j \leq q.$$
We first note that 
$$\left(
    \begin{array}{c}
      f_{2j} \\
      ... \\
      f_{pj} \\
    \end{array}
  \right)=\frac{g_{j}}{g_{1}-1}\left(
                                 \begin{array}{c}
                                   h_{2} \\
                                   ... \\
                                   h_{p} \\
                                 \end{array}
                               \right), 2 \leq j \leq q;$$
$$\Big(f_{i2}, \cdots ,f_{iq}\Big)=\frac{h_i}{g_1-1}\Big( g_2, \cdots,g_q \Big), 2 \leq i \leq p.$$
Consequently, the only submatrices of $H_G$ possibly with nonzero determinant other than single entries are
$$H_G \left(
     \begin{array}{cc}
       1 & k \\
       1 & l \\
     \end{array}
   \right)=f_{kl}, 2 \leq k \leq p, 2 \leq l \leq q.$$
As before,  we denote by $H_G\left( \begin{array}{cc}
       1 & k \\
       1 & l \\
 \end{array} \right)$ the determinant of the submatrix of $H_G$ formed by the $1^{\text{st}}, k^{\text{th}}$ rows and $1^{\text{st}}, l^{\text{th}}$ columns.
 
 Then by Lemma \ref{lemmazk}, we have 
\begin{equation}\label{eqngh}
 \det(I_p-H_G\overline{H_G}^t)=1-\sum_{i=2}^p |h_i|^2 -\sum_{j=1}^q |g_j|^2.
\end{equation}
We claim that $H_G$ maps $\mathbb{B}^n$ to $D_{p,q}^I.$ Indeed, note $H_G(0)=0, $ and $ \det(I_p-H_G\overline{H_G}^t) >0$ in $\mathbb{B}^n$ by equation (\ref{eqngh}). Thus the claim follows easily from the path-connectedness of $\mathbb{B}^n.$ Then  the 
following proposition is a consequence of equation (\ref{eqngh}).
\begin{proposition}
Let $q \geq p \geq 2, p+q \geq n+1.$ $H_G$ defined above  is a proper holomorphic map from $\mathbb{B}^n$ to $D_{p,q}^I.$
\end{proposition}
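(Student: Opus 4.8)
The plan is to establish the two defining features of $H_G$ separately, namely that it is a holomorphic map of $\mathbb{B}^n$ into $D^I_{p,q}$ and that it is proper, letting the identity (\ref{eqngh}) carry most of the weight in both parts.

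First I would confirm that $H_G$ is genuinely holomorphic on all of $\mathbb{B}^n$. The only denominators occurring among the entries $f_{ij}$ are $g_1-1$, so it suffices to verify $g_1 \not= 1$ throughout $\mathbb{B}^n$. Since $G$ maps $\mathbb{B}^n$ into $\mathbb{B}^{p+q-1}$, one has $|g_1(z)|^2 \leq |G(z)|^2 < 1$, whence $g_1(z) \not= 1$ and $H_G$ has no poles in $\mathbb{B}^n$. Next I would rewrite (\ref{eqngh}) as
$$\det(I_p - H_G\overline{H_G}^t) = 1 - |G(z)|^2,$$
using that $\sum_{j=1}^q|g_j|^2 + \sum_{i=2}^p|h_i|^2$ is exactly $|G|^2$; in particular this determinant is strictly positive on $\mathbb{B}^n$. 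To upgrade positivity of the determinant to positive-definiteness of $I_p - H_G\overline{H_G}^t$, which is what membership in $D^I_{p,q}$ requires, I would run the connectedness argument already signposted in the text: at $z=0$ the matrix equals $I_p$, which is positive definite, and as $z$ ranges over the connected set $\mathbb{B}^n$ the eigenvalues of the Hermitian matrix $I_p - H_G\overline{H_G}^t$ vary continuously while their product stays positive, so no eigenvalue can cross $0$ and the matrix remains positive definite everywhere. This yields $H_G(\mathbb{B}^n) \subset D^I_{p,q}$.

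For properness I would argue at the boundary. Let $z_k \in \mathbb{B}^n$ with $z_k \to \partial\mathbb{B}^n$. Since $G$ is a proper map into $\mathbb{B}^{p+q-1}$, we have $|G(z_k)| \to 1$, so the displayed identity gives $\det(I_p - H_G(z_k)\overline{H_G(z_k)}^t) = 1 - |G(z_k)|^2 \to 0$. Given any compact $K \subset D^I_{p,q}$, the continuous function $\det(I_p - Z\overline{Z}^t)$ attains a strictly positive minimum $\delta$ on $K$; hence $H_G(z_k) \notin K$ for all large $k$, i.e.\ $H_G(z_k)$ eventually leaves every compact subset of $D^I_{p,q}$. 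This is precisely the statement that $H_G$ is proper.

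I expect no serious obstacle: once (\ref{eqngh}) is available the entire argument reduces to continuity and compactness. The only points needing a little care are checking the absence of poles and turning determinant-positivity into positive-definiteness via the connectedness argument, after which properness follows at once from the boundary behaviour of $G$.
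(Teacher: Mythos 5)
Your proposal is correct and follows essentially the same route as the paper: the paper likewise deduces membership in $D^I_{p,q}$ from the identity (\ref{eqngh}) together with $H_G(0)=0$ and path-connectedness of $\mathbb{B}^n$, and then obtains properness as a direct consequence of (\ref{eqngh}) and the properness of $G$. You merely make explicit the details the paper leaves implicit (absence of poles since $|g_1|<1$, the eigenvalue-continuity step, and the compact-exhaustion formulation of properness), which is a faithful filling-in rather than a different argument.
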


When $p+q-1 \geq n+1, $ there is a  proper holomorphic map from $\mathbb{B}^n$ to $\mathbb{B}^{p+q-1}$ that does not have a $C^2$-smooth extension up  to any open piece of $\partial \mathbb{B}^n$ (cf. \cite{Do}). In particular, it  is not isometries with respect to Bergman metrics . Let $G$ be such a map. Then we have,
\begin{theorem}
Let $p+q \geq n+2.$ Then there exists a proper holomorphic map from $\mathbb{B}^n$ to $D_{p,q}^I$ that does not extend $C^2$-smoothly up to any open piece of boundary $\partial \mathbb{B}^n$.  In particular,  it  is not isometric.
\end{theorem}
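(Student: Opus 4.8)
The plan is to feed the construction (\ref{eqnhgtype1}) a proper map $G$ with the worst possible boundary behavior, and to observe that $H_G$ inherits this behavior entrywise. First I would invoke \cite{Do}: since $p+q-1 \geq n+1$, there is a proper holomorphic map $G: \mathbb{B}^n \rightarrow \mathbb{B}^{p+q-1}$ that does not extend $C^2$-smoothly up to any open piece of $\partial \mathbb{B}^n$. Pre- and post-composing $G$ with automorphisms of $\mathbb{B}^n$ and $\mathbb{B}^{p+q-1}$ leaves this property intact, since such automorphisms are holomorphic on neighborhoods of the respective closed balls and hence real-analytic up to the boundary; thus I may assume $G(0)=0$. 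Writing $G=(g_1,\dots,g_q,h_2,\dots,h_p)$ and applying (\ref{eqnhgtype1}), the preceding Proposition guarantees that $H_G:\mathbb{B}^n \rightarrow D^I_{p,q}$ is a proper holomorphic map.

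The key step is to transfer the irregularity from $G$ to $H_G$. By construction, every component of $G$ appears as an entry of $H_G$: the entries $g_1,\dots,g_q$ occupy the first row and $g_1,h_2,\dots,h_p$ the first column. Hence, if $H_G$ were to extend $C^2$-smoothly across some open piece $U \subset \partial\mathbb{B}^n$, then each of its entries would extend $C^2$-smoothly across $U$, and in particular so would every component of $G$, forcing $G$ itself to extend $C^2$-smoothly across $U$. This contradicts the choice of $G$, so $H_G$ extends $C^2$-smoothly across no open piece of $\partial\mathbb{B}^n$.

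Finally, for the non-isometry assertion I would appeal to the boundary regularity of isometries: a holomorphic isometry between bounded symmetric domains with respect to the Bergman metrics extends to a proper algebraic map by a theorem of Mok, and is therefore real-analytic --- in particular $C^2$ --- on an open dense subset of $\partial\mathbb{B}^n$. Since $H_G$ is $C^2$-smooth along no open piece of $\partial\mathbb{B}^n$, it cannot be an isometry.

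The argument is essentially bookkeeping once the right $G$ is in hand; the only genuine point requiring care is that the normalization to $G(0)=0$ via ball automorphisms neither creates nor destroys boundary smoothness, which holds because these automorphisms are biholomorphic past the closed ball. The decisive and elementary observation, which I expect to be the crux rather than a true obstacle, is that (\ref{eqnhgtype1}) plants the components of $G$ verbatim as entries of $H_G$, so any boundary pathology of $G$ is inherited by $H_G$ directly.
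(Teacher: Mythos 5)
Your proposal is correct and takes essentially the same approach as the paper: the paper likewise feeds a map $G$ from Dor's theorem into the construction (\ref{eqnhgtype1}), relies on the preceding proposition for properness of $H_G$, and (implicitly) on the fact that the components of $G$ sit verbatim in the first row and column of $H_G$, so that any $C^2$ boundary extension of $H_G$ would yield one for $G$. The details you supply---normalizing $G(0)=0$ by composing with ball automorphisms, and deducing non-isometry from Mok's algebraic extension theorem---are precisely the steps the paper leaves implicit.
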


\bigskip

We next consider the case when $n = n_{D_{p,q}^I} =p+q-1.$ Write $z=(z_{1},...,z_{q},w_{2},...,w_{p})$ as the coordinates in $\mathbb{C}^{p+q-1}.$ Let $G(z)=z$ be the identity map from $\mathbb{B}^{p+q-1}$ to $\mathbb{B}^{p+q-1}.$

Let $R^I_{p, q}=H_{G}.$ Namely,
\begin{equation}
R^I_{p, q}=\left(
    \begin{array}{cccc}
      z_{1} & z_{2} & ... & z_{q} \\
      w_{2} & f_{22} & ... & f_{2q} \\
      ... & ... & ... & ... \\
      w_{p} & f_{p2} & ... & f_{pq} \\
    \end{array}
  \right),
\end{equation}
where $f_{ij}=\frac{w_{i}z_{j}}{z_{1}-1}, 2 \leq i \leq p, 2 \leq j \leq q.$ 
 It is then straightforward to verify that $R^I_{p, q}$ is a holomorphic isometry from $\mathbb{B}^{p+q-1}$
to $D^{I}_{p,q}.$ Indeed, by equation (\ref{eqngh}), we have,
$$\det(I_p-R_{p,q}^I\overline{R_{p,q}^I}^t)=1-\sum_{j=1}^q |z_i|^2- \sum_{i=2}^p |w_i|^2.$$

%%%%%%%%%%%%%%%%%%%%%%%%%%%%%%%%%%%%%%%%%%%%%%%%%%
\subsection{Type II domains}
The type II domain is defined by 
$$D^{II}_m = \{Z \in D^I_{m, m} | Z = - Z^t\}$$
and the Bergman kernel is given by
$$ K(z,\bar z) = c_{II} \left( \det(I_m - Z\overline{Z}^t ) \right)^{-(m-1)},$$
for some positive constant $c_{II}$ depending on $m.$ Its boundary is contained in 
$$\{Z \in M(m, m; \mathbb{C})| \det(I_m-Z \overline{Z}^t)=0\}.$$
We will need the following results from algebra.
\begin{lemma}
Let $A=(a_{ij})$ be a $2n \times 2n, n \geq 1,$ skew-symmetric matrix. Then
$$\mathrm{det}(A)=(pf(A))^2.$$
\end{lemma}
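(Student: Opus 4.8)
The plan is to prove this classical identity---that the determinant of an even-dimensional skew-symmetric matrix is the square of its Pfaffian---by reducing $A$ to a symplectic normal form under congruence. First I would recall the combinatorial definition of the Pfaffian,
$$pf(A)=\frac{1}{2^n n!}\sum_{\sigma\in S_{2n}}\mathrm{sgn}(\sigma)\prod_{i=1}^n a_{\sigma(2i-1)\,\sigma(2i)},$$
and observe that both $\det(A)$ and $pf(A)^2$ are polynomials in the independent entries $a_{ij}$ $(1\le i<j\le 2n)$ that determine the skew-symmetric matrix. Hence it suffices to establish the identity on the Zariski-dense open set where $A$ is nonsingular; the general (possibly singular) case then follows automatically, since two polynomials agreeing on a dense set coincide. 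This framing is what lets me avoid a separate vanishing argument when $\det(A)=0$.

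The key input is the transformation law $pf(B^tAB)=\det(B)\,pf(A)$, valid for every $2n\times 2n$ matrix $B$ and skew-symmetric $A$. Granting this, the argument runs as follows. For the block-diagonal symplectic matrix $J=\mathrm{diag}\big(\left(\begin{smallmatrix}0&1\\-1&0\end{smallmatrix}\right),\dots,\left(\begin{smallmatrix}0&1\\-1&0\end{smallmatrix}\right)\big)$ a direct computation gives $pf(J)=1$ and $\det(J)=1$, so the identity holds for $J$. For nonsingular skew-symmetric $A$, a symplectic Gram--Schmidt reduction produces an invertible $B$ with $A=B^tJB$. Then
$$\det(A)=\det(B)^2\det(J)=\det(B)^2,\qquad pf(A)=\det(B)\,pf(J)=\det(B),$$
and squaring the second relation yields $pf(A)^2=\det(B)^2=\det(A)$, as desired. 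Combined with the polynomial-identity reduction of the first step, this proves the lemma in full generality.

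The main obstacle is the transformation law $pf(B^tAB)=\det(B)\,pf(A)$, which carries the real content. I would establish it through the exterior-algebra description of the Pfaffian: writing $\omega_A=\sum_{i<j}a_{ij}\,e_i\wedge e_j\in\Lambda^2(\mathbb{C}^{2n})$, one checks from the definition above that $\omega_A^{\wedge n}=n!\,pf(A)\,e_1\wedge\cdots\wedge e_{2n}$. The congruence $A\mapsto B^tAB$ corresponds to the linear change of variables induced by $B$, under which $\omega_A$ transforms into $\omega_{B^tAB}$ while the top form $e_1\wedge\cdots\wedge e_{2n}$ scales by $\det(B)$; comparing the coefficient of the top form in $\omega_{B^tAB}^{\wedge n}$ with $\det(B)\,\omega_A^{\wedge n}$ gives the law. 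An alternative I would keep in reserve is to verify the law first for permutation matrices and elementary (shear) matrices and then extend it by multiplicativity in $B$, which sidesteps the exterior algebra at the cost of a short induction. Either route reduces the whole lemma to the clean symplectic computation above.
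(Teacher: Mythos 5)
There is nothing in the paper to compare your argument against: the authors state this lemma without proof, treating it as a classical fact of algebra --- they only recall the combinatorial definition of the Pfaffian afterwards, and the algebraic background for this circle of results (including Lemma \ref{otwo}, which is the statement actually used in the type II construction) is referred to \cite{H2} and \cite{PS}. So your proposal must stand on its own, and it does: it is the standard modern proof, and each of its three ingredients is sound. The reduction to nonsingular $A$ by Zariski density is legitimate, since the skew-symmetric matrices form an affine space on which the locus $\det A\neq 0$ is nonempty and open, and two polynomials agreeing there agree identically; this also neatly avoids having to show directly that $pf(A)=0$ when $A$ is singular. The transformation law $pf(B^tAB)=\det(B)\,pf(A)$ is indeed the crux, and the exterior-algebra route works: congruence of matrices corresponds to the induced map on $2$-forms, the top form scales by $\det(B)$, and the normalization $\omega_A^{\wedge n}=n!\,pf(A)\,e_1\wedge\cdots\wedge e_{2n}$ is compatible with your $\tfrac{1}{2^n n!}$ formula (both give $pf=a_{12}$ for $n=1$ and both are multiplicative over direct sums of blocks, which also confirms $pf(J)=\det(J)=1$). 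Finally, the existence of an invertible $B$ with $A=B^tJB$ for nonsingular alternating $A$ is the standard classification of nondegenerate alternating forms, valid over $\RR$ or $\CC$ alike, and then $\det A=\det(B)^2$ and $pf(A)=\det(B)$ finish the argument. What your route buys is self-containedness; what the paper's citation buys is economy, which is defensible here since the lemma is purely auxiliary, feeding into the identity for $\det(I_m-H_G\overline{H_G}^t)$ that shows the maps $H_G$ are proper maps into $D^{II}_m$.
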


Here $\mathrm{pf}(A)$ is a homogenous polynomial in the matrix entries. This polynomial is called the Pfaffian of the matrix $A$ that can be explicitly written as follows.
Let $\Pi$ be the set of all partitions of $\{1,2,...,2n\}$ into pairs without regard to order.  An element $\alpha \in \Pi$ can be written as

$$\alpha=\{ (i_{1}, j_{1}), (i_{2}, j_{2}),...,(i_{n},j_{n})\}$$
with $i_{k}<j_{k}$ and $i_{1}<i_{2}<...<i_{n}.$ Let

$$\pi=\left[\begin{array}{cccccc}
             1 & 2 & 3 & 4 & ... & 2n \\
             i_{1} & j_{1} & i_{2} & j_{2} & ... & j_{n} \\
\end{array}\right]
$$
be the corresponding permutation. Given a partition $\alpha$ as above, define
$$A_{\alpha}=\mathrm{sgn}(\pi)a_{i_{1},j_{1}}a_{i_{2},j_{2}}\cdots a_{i_{n},j_{n}}.$$
The Pfaffian of $A$ is then given by,
$$\mathrm{pf}(A)=\sum_{\alpha \in \Pi} A_{\alpha}.$$
Note that the determinant of an $n \times n$ skew-symmetric matrix for $n$ odd is always zero. The Pfaffian of an $n \times n$ skew-symmetric matrix for $n$ odd is defined to be zero.  

\bigskip

Moreover, we have the following result from algebra. For more details and its proof, see \cite{H2}, \cite{PS}.

\begin{lemma}\label{otwo}
Let $I_{n}$ be the $n \times n$ identity matrix, Z be an $n \times n$ skew-symmetric matrix.  Then
\begin{equation}\label{eqndets}
\mathrm{det}(I_{n}-Z\overline{Z}^t)=\left( 1+ \sum_{1 \leq k \leq n, 2|k} (-1)^{\frac{k}{2}} \left( \sum_{1 \leq i_{1}<...< i_{k}\leq n } \left| Z\left(
                       \begin{array}{ccc}
                         i_{1} & ... & i_{k} \\
                         i_{1} & ... & i_{k} \\
                       \end{array}
                     \right)
 \right| \right) \right)^2.
\end{equation}
Here $``2|k"$ means that $k$ is divisible by $2.$
\end{lemma}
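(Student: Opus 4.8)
The plan is to realize $\det(I_n - Z\overline{Z}^t)$ as the determinant of an explicit $2n\times 2n$ skew-symmetric matrix, so that the perfect-square shape becomes an instance of the identity $\det(A)=\mathrm{pf}(A)^2$ recorded above, and then to read off the closed form by expanding that Pfaffian. First, since $Z=-Z^t$, its conjugate satisfies $\overline{Z}^t=-\overline{Z}$, so that $I_n - Z\overline{Z}^t = I_n + Z\overline{Z}$. I introduce the block matrix
$$\mathcal{B} = \begin{pmatrix} Z & I_n \\ -I_n & \overline{Z}\end{pmatrix},$$
and a direct transpose computation using $Z^t=-Z$ and $\overline{Z}^t=-\overline{Z}$ gives $\mathcal{B}^t=-\mathcal{B}$, so $\mathcal{B}$ is skew-symmetric of even size $2n$.

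Next I compute $\det\mathcal{B}$. Treating $Z$ as invertible (the resulting identity of polynomials in the entries then holds for all $Z$ by continuity), the Schur complement with respect to the top-left block yields
$$\det\mathcal{B} = \det(Z)\det\big(\overline{Z} + Z^{-1}\big) = \det\big(I_n + Z\overline{Z}\big) = \det\big(I_n - Z\overline{Z}^t\big).$$
Combining this with $\det(\mathcal{B})=\mathrm{pf}(\mathcal{B})^2$ already establishes that the left-hand side is a perfect square, namely $\det(I_n - Z\overline{Z}^t)=\mathrm{pf}(\mathcal{B})^2$.

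The main work is to identify $\mathrm{pf}(\mathcal{B})$ with the bracketed expression. I expand $\mathrm{pf}(\mathcal{B})$ as a signed sum over perfect matchings of the index set $\{1,\dots,n\}\cup\{1',\dots,n'\}$, where $i'$ labels the $i$-th index of the bottom block. Because the off-diagonal blocks are $\pm I_n$, a mixed edge $(i,j')$ contributes only when $j=i$; hence every contributing matching is determined by the subset of indices joined by the ``diagonal'' edges $(i,i')$ together with a perfect matching of the complementary set $T=\{i_1<\cdots<i_k\}$ inside the top block and its mirror inside the bottom block. Summing the top matchings produces $\mathrm{pf}(Z_T)$ and the bottom ones produce $\mathrm{pf}(\overline{Z}_T)=\overline{\mathrm{pf}(Z_T)}$, so each even subset $T$ contributes a term proportional to $\mathrm{pf}(Z_T)\,\overline{\mathrm{pf}(Z_T)}=|\mathrm{pf}(Z_T)|^2=\big|Z(\begin{smallmatrix} i_1 & \cdots & i_k \\ i_1 & \cdots & i_k\end{smallmatrix})\big|$, where the last equality uses $\det(Z_T)=\mathrm{pf}(Z_T)^2$; the empty set gives the constant $1$.

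The main obstacle is precisely the sign bookkeeping in this last step: one must compute the signature of the permutation attached to each matching, checking that the all-diagonal matching contributes $+1$ and that replacing a diagonal pair $(i,i')$ by a top-top/bottom-bottom pair accumulates to exactly $(-1)^{k/2}$ over the $k$ indices of $T$. This is routine but delicate. I expect the cleanest way to settle the signs is to bypass the matching combinatorics altogether: specialize the principal-minor form of Lemma \ref{lemmazk} to $p=q=n$, and use $\det(Z_T)=\mathrm{pf}(Z_T)^2$ together with the Plücker/Cauchy--Binet relations for Pfaffian minors to regroup the alternating sum of squared minors into the square of the alternating sum of Pfaffians, thereby matching the asserted formula and fixing every sign simultaneously.
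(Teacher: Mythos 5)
The paper itself offers no proof of Lemma \ref{otwo} at all --- it is quoted from the literature with a pointer to Hua \cite{H2} and Pressley--Segal \cite{PS} --- so any argument you give is necessarily a ``different route,'' and the skeleton of yours is sound: $\mathcal{B}$ is indeed skew-symmetric of size $2n$, $\det(\mathcal{B})=\mathrm{pf}(\mathcal{B})^2$, and $\det(\mathcal{B})=\det(I_n+Z\overline{Z})=\det(I_n-Z\overline{Z}^t)$. Before the main issue, one repairable flaw: you justify $\det(\mathcal{B})=\det(I_n+Z\overline{Z})$ by ``treating $Z$ as invertible'' and passing to the limit, but for odd $n$ \emph{every} complex skew-symmetric matrix is singular (from $\det Z=\det(-Z^t)=(-1)^n\det Z$), so inside the skew-symmetric matrices there is no dense set of invertible $Z$ to take limits from. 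You must either prove the identity $\det\begin{pmatrix} Z & I_n\\ -I_n & W\end{pmatrix}=\det(I_n+ZW)$ for two \emph{independent arbitrary} matrices $Z,W$ (where invertible $Z$ are dense) and only then specialize $W=\overline{Z}$, or avoid Schur complements altogether via the block row operation $R_1\mapsto R_1+ZR_2$, which needs no invertibility.

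The essential gap is that the lemma asserts an exact formula, and the identification $\mathrm{pf}(\mathcal{B})=\pm\left(1+\sum_{2\mid k}(-1)^{k/2}\sum_{T}|\mathrm{pf}(Z_T)|^2\right)$ is precisely the step you never prove. Your matching analysis correctly isolates the structure of contributing matchings, although ``its mirror inside the bottom block'' is wrong as stated: the top matching and the bottom matching on $T$ are chosen \emph{independently}, and this independence is exactly why the sum over them factors as $\mathrm{pf}(Z_T)\,\mathrm{pf}(\overline{Z}_T)$. But the entire content of the formula is the claim that the permutation sign of each such matching equals a sign $\varepsilon(T)$ depending only on $T$ times the two Pfaffian signs, with $\varepsilon(T)=\pm(-1)^{k/2}$; this you explicitly defer (``routine but delicate'') and then propose to bypass. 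The bypass is not a proof either: specializing Lemma \ref{lemmazk} to $p=q=n$ and ``regrouping'' requires, for each $k$, the identity $\sum_{|I|=|J|=k}|Z(I;J)|^2=\sum_{|T_1|+|T_2|=2k}|\mathrm{pf}(Z_{T_1})|^2\,|\mathrm{pf}(Z_{T_2})|^2$, which, given Lemma \ref{lemmazk}, \emph{is} the bidegree-$(k,k)$ component of the identity (\ref{eqndets}) you are trying to prove; invoking it via unproved ``Pl\"ucker/Cauchy--Binet relations for Pfaffian minors'' is circular. As written, you have established only that $\det(I_n-Z\overline{Z}^t)$ is the square of \emph{some} polynomial in the entries of $Z$ and $\overline{Z}$. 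A concrete way to finish within your framework: first prove the factorization $\mathrm{pf}(\mathcal{B})=\sum_{T}\varepsilon(T)\,\mathrm{pf}(Z_T)\,\mathrm{pf}(\overline{Z}_T)$ with $\varepsilon(T)$ universal (expansion of a Pfaffian along complementary index sets, the base matching $(1,1')(2,2')\cdots(n,n')$ having sign $(-1)^{n(n-1)/2}$), and then pin down $\varepsilon(T)$ by evaluating both sides on matrices $Z$ supported on $2\times 2$ diagonal blocks that pair up $T$, where every quantity is an explicit product.
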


Write $z$ as the coordinates in $\mathbb{C}^n.$ Let $G(z)$ be a proper holomorphic map from $\mathbb{B}^n$ to $\mathbb{B}^{2m-3}, $ where $m$ is an integer with $2m-3 \geq n, m \geq 3.$ Assume $G(0)=0.$ Write $G=(g_{2},...,g_{m},h_{3},...,h_{m}).$ We define a holomorphic  map $H_G$ from $\mathbb{B}^n$ to $M(m,m;\mathbb{C})$ associated to $G$ by
\begin{equation}\label{eqnhgtype2}
H_G(z)=\left(
    \begin{array}{cccccc}
      0 & g_{2} & g_{3} & g_{4} & ... & g_{m} \\
      -g_{2} & 0 & h_{3} & h_{4} & ... & h_{m} \\
      -g_{3} & -h_{3} & 0 & f_{34} & ... & f_{3m} \\
      ... & ... & ... & ... & ... & ... \\
      -g_{m-1} & ... & ... & ... & 0 & f_{(m-1)m} \\
      -g_{m} & -h_{m} & ... & ... & ... & 0 \\
    \end{array}
  \right),
\end{equation}
where  
$$f_{ij}=\frac{g_{i}h_{j}-g_{j}h_{i}}{g_{2}-1}, 3 \leq i,j \leq m.$$

\begin{proposition}
Let $m \geq 3, 2m-3 \geq n.$  $H_G$ in (\ref{eqnhgtype2})  is a proper holomorphic map from $\mathbb{B}^n$ to $D_{m}^{II}.$
\end{proposition}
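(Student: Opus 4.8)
The plan is to follow verbatim the strategy used for the type I case above. Since $G$ maps into $\mathbb{B}^{2m-3}$ we have $|g_2|<1$ throughout $\mathbb{B}^n$, so $g_2-1$ never vanishes and $H_G$ is holomorphic on $\mathbb{B}^n$; it is skew-symmetric by construction. Everything reduces to establishing the single identity
$$\det(I_m - H_G\overline{H_G}^t) = \left(1 - \sum_{j=2}^m |g_j|^2 - \sum_{j=3}^m |h_j|^2\right)^2 = \bigl(1-|G|^2\bigr)^2.$$
Granting this, the proposition follows exactly as in type I: because $H_G(0)=0$ and the right-hand side is strictly positive on $\mathbb{B}^n$, the Hermitian matrix $I_m-H_G\overline{H_G}^t$ can never become singular along a path from the origin, hence stays positive definite, so $H_G(\mathbb{B}^n)\subseteq D^{II}_m$; and since properness of $G$ gives $1-|G(z)|^2\to 0$ as $z\to\partial\mathbb{B}^n$, the defining function of $D^{II}_m$ tends to $0$ and $H_G$ is proper.

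To prove the identity I would invoke Lemma \ref{otwo}, which writes $\det(I_m-H_G\overline{H_G}^t)$ as the square of an expression $1+\sum_{k\text{ even}}(-1)^{k/2}P_k$, where $P_k$ is the sum over all $k\times k$ principal submatrices of the modulus of their determinants; for a skew-symmetric block this modulus equals $|\mathrm{pf}|^2$ of the block. The order-two contribution is $-\bigl(\sum_{j=2}^m|g_j|^2+\sum_{j=3}^m|h_j|^2+\sum_{3\le i<j\le m}|f_{ij}|^2\bigr)$. The essential point is a cancellation at order four: a direct Pfaffian computation shows that the principal minor on an index set $\{i,j,k,l\}$ vanishes unless the set is exactly $\{1,2,k,l\}$ with $3\le k<l\le m$, and for the latter
$$g_2 f_{kl} - g_k h_l + g_l h_k = f_{kl}, \qquad f_{kl}=\frac{g_k h_l - g_l h_k}{g_2-1},$$
so the order-four contribution is $+\sum_{3\le k<l\le m}|f_{kl}|^2$, which cancels the $\sum_{i<j}|f_{ij}|^2$ coming from order two and leaves precisely $1-\sum_{j=2}^m|g_j|^2-\sum_{j=3}^m|h_j|^2$ inside the square.

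The main obstacle is to guarantee that no contributions survive at orders $\ge 6$; I would settle this by a rank bound rather than by term-by-term cancellation. Writing $\mathbf{g}=(0,g_2,\dots,g_m)$, $\mathbf{h}=(0,0,h_3,\dots,h_m)$ and $\mathbf{u}=\mathbf{g}-g_2 e_2$ (with $e_1,e_2$ the first two standard basis vectors of $\mathbb{C}^m$), one verifies the decomposition
$$H_G = \bigl(e_1\mathbf{g}^t-\mathbf{g}e_1^t\bigr) + \bigl(e_2\mathbf{h}^t-\mathbf{h}e_2^t\bigr) + \frac{1}{g_2-1}\bigl(\mathbf{u}\,\mathbf{h}^t-\mathbf{h}\,\mathbf{u}^t\bigr),$$
whose column space is contained in $\mathrm{span}\{e_1,e_2,\mathbf{g},\mathbf{h}\}$; hence $\mathrm{rank}(H_G)\le 4$ and every principal submatrix of order $\ge 6$ is singular with vanishing Pfaffian. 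This bound simultaneously reconfirms that only the $\{1,2,k,l\}$ minors survive at order four. The one genuinely new computation compared with type I is the order-four case analysis, showing that the sets $\{1,j,k,l\}$ and $\{2,j,k,l\}$ with $3\le j<k<l$ give Pfaffian zero by the antisymmetry of $g_ih_j-g_jh_i$, and this is the step I would carry out in full; the positivity, boundary, and properness verifications are routine and I would only sketch them.
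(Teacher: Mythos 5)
Your proposal is correct, and its skeleton coincides with the paper's: both reduce everything to the identity $\det(I_m-H_G\overline{H_G}^t)=\bigl(1-\sum_{i=2}^m|g_i|^2-\sum_{j=3}^m|h_j|^2\bigr)^2$ via Lemma \ref{otwo}, compute the order-two and order-four principal minors in the same way (including the key cancellation $g_2f_{kl}-g_kh_l+g_lh_k=f_{kl}$), and finish with the same positivity/path-connectedness and properness argument. The genuine difference is how the principal minors of order at least $5$ are killed. The paper does this by a separate lemma: odd orders vanish by skew-symmetry, and even orders $\geq 6$ vanish via the Pfaffian expansion ${\rm pf}(A)=\sum_j(-1)^ja_{1j}{\rm pf}(A_{\hat{1}\hat{j}})$, which reduces them to the order-four Pfaffians already shown to vanish in (\ref{eqt1f})--(\ref{eqtif}). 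You instead exhibit the rank-four decomposition $H_G=(e_1\mathbf{g}^t-\mathbf{g}e_1^t)+(e_2\mathbf{h}^t-\mathbf{h}e_2^t)+\frac{1}{g_2-1}(\mathbf{u}\,\mathbf{h}^t-\mathbf{h}\,\mathbf{u}^t)$, which is correct (it checks out entrywise) and annihilates all minors of order $\geq 5$ at once; this is cleaner than the recursion and buys you something extra: restricting the same decomposition to a principal block shows that a block missing one of the indices $1,2$ has rank $\leq 3$, and a block on indices all $\geq 3$ has rank $\leq 2$, so it also reproves the order-four vanishings with no computation. Two small cautions. First, your parenthetical claim that the global bound ${\rm rank}(H_G)\leq 4$ ``simultaneously reconfirms'' the order-four vanishing is not literally right---a rank-four matrix can perfectly well have nonzero $4\times 4$ principal minors---so you must either localize the decomposition as just described or carry out the direct computations you promise. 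Second, your enumeration of the order-four cases omits the sets $\{i,j,k,l\}$ with all indices $\geq 3$ (the paper's (\ref{eqtif})); that case vanishes by the Pl\"ucker-type identity $f_{ij}f_{kl}-f_{ik}f_{jl}+f_{il}f_{jk}=0$, or by the rank-$2$ structure of that block, so this is an omission of bookkeeping rather than of substance.
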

\begin{proof}
We first compute the determinants of the principal submatrices of $H_G.$ It follows from the straightforward calculation that 
$$H_G \left(
   \begin{array}{cc}
     1 & i \\
     1 & i \\
   \end{array}
 \right)=(g_i)^2, 2 \leq i \leq m;$$
 
$$H_G \left(
   \begin{array}{cc}
     2 & j \\
     2 & j \\
   \end{array}
 \right)=(h_j)^2, 3 \leq j \leq m;$$
 
$$H_G \left(
   \begin{array}{cc}
     i & j \\
     i & j \\
   \end{array}
 \right)=(f_{ij})^2, 3 \leq i < j  \leq m.$$
Moreover, 
\begin{equation}
H_G \left(
   \begin{array}{cccc}
     1 & 2 & k & l \\
     1 & 2 & k & l \\
   \end{array}
 \right)=(g_2 f_{kl}-g_kh_l+g_l h_k)^2=(f_{kl})^2,~~3 \leq k < l \leq m;
\end{equation}

\begin{equation}\label{eqt1f}
H_G \left(
   \begin{array}{cccc}
     1 & j & k & l \\
     1 & j & k & l \\
   \end{array}
 \right)=(g_{j}f_{kl}-g_{k}f_{jl}+g_{l}f_{jk})^2=0,~~3 \leq j < k <l \leq m;
\end{equation}

\begin{equation}\label{eqt2f}
H_G \left(
   \begin{array}{cccc}
     2 & j & k & l \\
     2 & j & k & l \\
   \end{array}
 \right)=(h_{j}f_{kl}-h_{k}f_{jl}+h_{l}f_{jk})^2=0,~~3 \leq j < k <l \leq m;
\end{equation}

\begin{equation}\label{eqtif}
H_G \left(
   \begin{array}{cccc}
     i & j & k & l \\
     i & j & k & l \\
   \end{array}
 \right)=(f_{ij}f_{kl}-f_{ik}f_{jl}+f_{jk}f_{il})^2=0,~~3 \leq i<j<k<l \leq m.
\end{equation}

For higher order submatrices, we have the lemma below. 
\begin{lemma}
Every  principal submatrix of $H_G$ with order $\geq 5$ has zero determinant.
\end{lemma}
\begin{proof}
First we note that any $m \times m$ anti-symmetric matrix for odd $m$ has zero determinant. For $m$ even, we recall the fact that the Pfaffian of an anti-symmetric
$m \times m$ matrix $A$ can be computed recursively as
$${\rm pf}(A)=\sum_{j=2}^m (-1)^j a_{1j} {\rm pf}(A_{\hat{1}\hat{j}}).$$
Here $A_{\hat{1}\hat{j}}$ denotes the matrix obtained from $A$ with both
its $1$-st and $j$-th rows and columns removed. This together with
(\ref{eqt2f})-(\ref{eqtif}) yields that all principal submatrices of $H_G$ with even order
$\geq 6$ has zero determinant.
\end{proof}

 Then it follows from Lemma \ref{otwo} that
\begin{equation}\label{eqnghj}
\det(I_n-H_G\overline{H_G}^t)=\left(1-\sum_{i=2}^m |g_i|^2-\sum_{j=3}^m |h_j|^2\right)^2.
\end{equation}

Therefore we conclude as in type I case that $H_G$ is a proper holomorphic  map from $\mathbb{B}^n$ to $D_{n}^{II}.$
\end{proof}

When $2m-3 \geq n+1,$ i.e., $m \geq 2+ \frac{n}{2},$ there is a  proper holomorphic map from $\mathbb{B}^n$ to $\mathbb{B}^{2m-3}$ that does not have a $C^2$-smooth extension to any open piece of $\partial \mathbb{B}^n$ (See \cite{Do}).  In particular,  it is  not isometric. Let $G$ be such a map.
Then $H_G$ is not isometric, either.  We thus have proved,
\begin{theorem}
Let $m, n$ be  integers such that $n \geq 2, m \geq 2+\frac{n}{2}.$ Then there is a proper holomorphic map from $\mathbb{B}^n$ to $D_m^{II}$ that does not extend $C^2$-smoothly to any open piece of $\partial \mathbb{B}^n.$ In particular, it  is not isometric.
\end{theorem}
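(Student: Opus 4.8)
The plan is to mirror the Type I argument carried out just above: realize the desired map as $H_G$ for a suitably irregular underlying ball map $G$, and then transfer the boundary irregularity of $G$ to $H_G$. First I would record that the hypothesis $m \geq 2+\frac{n}{2}$ is exactly the inequality $2m-3 \geq n+1$. Under this inequality the result cited from \cite{Do} furnishes a proper holomorphic map $G\colon \mathbb{B}^n \to \mathbb{B}^{2m-3}$ that does not extend $C^2$-smoothly across any open piece of $\partial\mathbb{B}^n$. Since $n\geq 2$ forces $m\geq 3$, and since composing $G$ with an automorphism of $\mathbb{B}^{2m-3}$ (which is rational and smooth on a neighborhood of $\overline{\mathbb{B}}^{2m-3}$, hence preserves both properness and the failure of $C^2$ extension) lets us normalize $G(0)=0$, the construction (\ref{eqnhgtype2}) applies. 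By the preceding Proposition, together with the determinant identity (\ref{eqnghj}), the associated map $H_G$ is then a proper holomorphic map from $\mathbb{B}^n$ to $D^{II}_m$.

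The key observation is that every component of $G$ appears verbatim as an entry of $H_G$: writing $G=(g_2,\dots,g_m,h_3,\dots,h_m)$, the functions $g_2,\dots,g_m$ occupy the first row of $H_G$ in (\ref{eqnhgtype2}) and $h_3,\dots,h_m$ occupy the second row. Consequently, if $H_G$ extended $C^2$-smoothly to some open piece $U\subset\partial\mathbb{B}^n$, then each scalar entry of $H_G$ would do so as well — these entries are obtained from $H_G$ by composing with the smooth coordinate projections of $M(m,m;\mathbb{C})$ — and in particular all components of $G$ would extend $C^2$-smoothly across $U$. This contradicts the choice of $G$, so $H_G$ admits no $C^2$ extension across any open piece of $\partial\mathbb{B}^n$.

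Finally, for the assertion that $H_G$ is not isometric, I would invoke the boundary regularity of holomorphic isometries between bounded symmetric domains: by Mok's extension theorem such an isometry is the restriction of an algebraic, hence real-analytic, map, so it would extend $C^2$-smoothly across an open (indeed dense) piece of $\partial\mathbb{B}^n$. Since $H_G$ has no such extension, it cannot be an isometry. The only point demanding care — and the step I would regard as the main, though mild, obstacle — is making the inheritance of irregularity fully precise, namely ensuring that $C^2$-smoothness of $H_G$ up to $U$ genuinely forces $C^2$-smoothness of its scalar entries up to $U$; this is immediate from the fact that those entries are linear coordinate functions of $H_G$, so no cancellation among them can mask the singular boundary behavior of the components of $G$.
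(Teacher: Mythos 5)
Your proposal is correct and follows essentially the same route as the paper: take Dor's boundary-irregular proper map $G\colon \mathbb{B}^n \to \mathbb{B}^{2m-3}$ (noting $m \geq 2+\tfrac{n}{2} \Leftrightarrow 2m-3 \geq n+1$), form $H_G$ via (\ref{eqnhgtype2}), observe that the components of $G$ sit verbatim among the entries of $H_G$ so the failure of $C^2$ extension is inherited, and conclude non-isometry from the fact that isometries extend. The details you add beyond the paper's terse write-up — normalizing $G(0)=0$ by an automorphism and explicitly invoking Mok's extension theorem — are exactly the right glue and are consistent with the paper's intended argument.
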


Now we consider the case $n=n_{D_m^{II}} =2m-3$ where $m$ is an integer. Write $z=(z_{2},...,z_{m},w_{3},...,w_{m})$ as the coordinates of $\mathbb{C}^{2m-3}, n \geq 5.$ Let $G(z)=z$ be th identity map from
$\mathbb{B}^{2m-3}$ to itself. In this case, $H_G$ is reduced to  $R^{II}_m: \mathbb{B}^{2m-3}\rightarrow D^{II}_{m}$ given by
\begin{equation}
R^{II}_m =\left(
    \begin{array}{cccccc}
      0 & z_{2} & z_{3} & z_{4} & ... & z_{m} \\
      -z_{2} & 0 & w_{3} & w_{4} & ... & w_{m} \\
      -z_{3} & -w_{3} & 0 & f_{34} & ... & f_{3m} \\
      ... & ... & ... & ... & ... & ... \\
      -z_{m-1} & ... & ... & ... & 0 & f_{(m-1)m} \\
      -z_{m} & -w_{m} & ... & ... & ... & 0 \\
    \end{array}
  \right),
\end{equation}
where $f_{ij}=\frac{z_{i}w_{j}-z_{j}w_{i}}{z_{2}-1}, 3 \leq i, j \leq m.$ By equation (\ref{eqnghj}), we have
$$\det(I_n-R_m^{II}\overline{R_m^{II}}^t)=(1-\sum_{i=2}^m |z_i|^2-\sum_{j=3}^m |w_j|^2)^2.$$
We thus conclude that $R^{II}_m$ is a holomorphic isometry from $\mathbb{B}^{2m-3}$ to $D^{II}_{m}.$

%%%%%%%%%%%%%%%%%%%%%%%%%%%%%%%%%%%%%%%%%%%
\subsection{Type III domains}
The type III dmain is defined by 
$$D^{III}_m = \{Z \in D^I_{m, m} | Z = Z^t\}$$
and the Bergman kernel is given by 
$$K(Z, \bar Z) = c_{III} \left( \det(I_m -  Z\overline{Z}^t) \right)^{-(m+1)}$$
for some constant $c_{III}$ depending on $m.$
Its boundary is contained in
$$\{Z \in M(m, m; \mathbb{C})| \det(I_m-Z \overline{Z}^t)=0\}.$$

We write $z=(z_{1},...,z_{n})$ as the coordinates in $\mathbb{C}^n.$ Let $G(z)$ be a proper holomorphic map from $\mathbb{B}^n$ to $\mathbb{B}^m, m \geq \mathrm{max}\{n,2\},$ with $G(0)=0.$ Write $G=(g_1,...,g_m).$ We define a holomorphic map $H_{G}$ from $\mathbb{B}^n$ to $M(m,m;\mathbb{C})$ associated 
to $G:$
\begin{equation}\label{eqnhgtype3}
H_G(z)=\left(
    \begin{array}{cccc}
      g_{1} & \frac{g_{2}}{\sqrt{2}} & ... & \frac{g_{m}}{\sqrt{2}} \\
      \frac{g_{2}}{\sqrt{2}} & f_{22} & ... & f_{2m} \\
      ... & ... & ... & ... \\
      \frac{g_{m}}{\sqrt{2}} & f_{m2} & ... & f_{mm} \\
    \end{array}
  \right),
\end{equation}
where
$$f_{ij}=f_{ji}=\frac{g_{i}g_{j}}{2(g_{1}-1)}, 2 \leq i,j \leq m. $$
Then we can prove the following proposition similarly.

\begin{proposition}
Let $m \geq  \mathrm{max}\{n ,2\}, H_G$ be as above. Then $H_G$ is a proper holomorphic proper map from $\mathbb{B}^n$ to $D_m^{III}.$
\end{proposition}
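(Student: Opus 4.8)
The plan is to follow exactly the template established for the type I and type II cases: check that $H_G$ is well-defined and symmetric, compute $\det(I_m-H_G\overline{H_G}^t)$ through the minor expansion of Lemma \ref{lemmazk}, and then read off both the mapping property and properness from the resulting identity. First I would note that since $|G|<1$ on $\mathbb{B}^n$ we have $g_1\neq 1$ there, so $H_G$ is holomorphic on all of $\mathbb{B}^n$; and because $f_{ij}=f_{ji}$ while the first row and first column agree, $H_G$ is symmetric by construction and therefore automatically takes values among symmetric matrices. It remains only to control the Hermitian matrix $I_m-H_G\overline{H_G}^t$.

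The key simplification is a rank observation. For $i\ge 2$ the $i$-th row of $H_G$ equals $g_i\bigl(\frac{1}{\sqrt2},\frac{g_2}{2(g_1-1)},\dots,\frac{g_m}{2(g_1-1)}\bigr)$, so rows $2,\dots,m$ are all scalar multiples of a single fixed vector. Hence the row space of $H_G$ is spanned by that vector together with the first row, giving $\mathrm{rank}(H_G)\le 2$ and killing every minor of size $\ge 3$ at once. Thus in Lemma \ref{lemmazk} only the $k=1$ and $k=2$ terms survive, and a direct check shows the sole nonzero $2\times 2$ minors are those on rows $\{1,k\}$ and columns $\{1,l\}$ with $k,l\ge 2$, each equal to exactly $f_{kl}$.

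I would then assemble the determinant. The $k=1$ term contributes $-\sum_{i,j}|(H_G)_{ij}|^2$, whose first-row-and-column part equals $-\sum_{j=1}^m|g_j|^2$ and whose remaining part equals $-\sum_{i,j\ge 2}|f_{ij}|^2$, while the $k=2$ term contributes $+\sum_{k,l\ge 2}|f_{kl}|^2$. The two $f$-sums cancel, leaving
$$\det(I_m-H_G\overline{H_G}^t)=1-\sum_{j=1}^m|g_j|^2=1-|G|^2.$$
Since $G$ is proper into $\mathbb{B}^m$, this quantity is positive on $\mathbb{B}^n$; as $I_m-H_G\overline{H_G}^t$ equals $I_m>0$ at the origin and has nonvanishing determinant along any path in the connected set $\mathbb{B}^n$, continuity of eigenvalues forces it to stay positive definite, so $H_G(\mathbb{B}^n)\subset D^{III}_m$. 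Properness follows because $1-|G(z)|^2\to 0$ as $z\to\partial\mathbb{B}^n$, which forces $H_G(z)$ to leave every compact subset of $D^{III}_m$ (on such a set $\det(I_m-Z\overline{Z}^t)$ is bounded below).

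The argument is essentially routine given the two preceding cases; the only points needing genuine attention are the rank-$\le 2$ observation, which trivializes all higher minors simultaneously, and the sign bookkeeping in Lemma \ref{lemmazk} that produces the exact cancellation of the $f_{ij}$ contributions. I do not anticipate any serious obstacle beyond confirming carefully that no other $2\times 2$ minor is nonzero and that the diagonal $k=l$ minors are correctly included in the cancellation.
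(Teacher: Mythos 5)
Your proof is correct and follows essentially the same route as the paper's: both apply the minor expansion of Lemma \ref{lemmazk}, identify the nonzero $2\times 2$ minors as $f_{kl}$ (rows $\{1,k\}$, columns $\{1,l\}$), obtain the identity $\det(I_m-H_G\overline{H_G}^t)=1-\sum_{i=1}^m|g_i|^2$ after the cancellation, and conclude via $H_G(0)=0$ and connectedness. Your rank-$\le 2$ observation is a slightly cleaner way to dispose of all minors of size $\ge 3$ at once, where the paper instead checks the $2\times 2$ minors case by case and deduces the vanishing of the $3\times 3$ ones, but this is a cosmetic difference, not a different argument.
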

\begin{proof}
We first compute determinants of all  $2 \times 2$ submatrices of $H_G.$ 
$$H_G \left(
   \begin{array}{cc}
     1 & i \\
     1 & k \\
   \end{array}
 \right)=f_{ik}, 2 \leq i , k \leq m;$$

$$H_G\left(
                    \begin{array}{cc}
                      1 & i \\
                      k & l \\
                    \end{array}
                  \right)=H_G \left(
                             \begin{array}{cc}
                               k & l \\
                               1 & i \\
                             \end{array}
                           \right)=0,~2 \leq i \leq m, ~2 \leq k <l \leq m;$$
$$H_G\left(
                                         \begin{array}{cc}
                                           i & j \\
                                           k & l \\
                                         \end{array}
                                       \right)=0,~2 \leq i < j \leq m,~2 \leq k < l \leq m.$$
This implies that the determinants of all $3 \times 3$ submatrices of $H_G$ are zero. We then obtain by Lemma \ref{lemmazk} that 
\begin{equation}\label{eqngi}
\det(I_n-H_G\overline{H_G}^t)=1-\sum_{i=1}^m |g_i|^2. 
\end{equation}
Also note $H_G(0)=0.$ We conclude as before that $H_G$ is a proper holomorphic map from $\mathbb{B}^n$ to $D_m^{III}.$                        
\end{proof}

Again since when $m \geq n+1,$ there is a proper holomorphic map $G$ that does not have a $C^2$-smooth extension up to any open piece of $\partial \mathbb{B}^n$ (See \cite{Do}). In particular, it  is not isometric. Then $H_G$ is not an isometry, either. Thus we have
\begin{theorem}
Let $m \geq n+1.$ Then there is a proper holomorphic map from $\mathbb{B}^n$ to $D_m^{III}$ that does not extend $C^2-$smoothly to any open piece of $\partial \mathbb{B}^n$. In particular, it  is not isometric.
\end{theorem}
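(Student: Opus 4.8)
The plan is to produce the required map as $H_G$ for a carefully chosen ball map $G$, exactly paralleling the type I and type II constructions of this section. The one external ingredient needed is a proper holomorphic map $G=(g_1,\dots,g_m)\colon \mathbb{B}^n\to \mathbb{B}^m$ with $G(0)=0$ that admits no $C^2$-smooth extension across any open piece of $\partial\mathbb{B}^n$; such a map exists precisely because $m\geq n+1$, by the result cited in \cite{Do} (after composing with an automorphism of $\mathbb{B}^m$ to arrange $G(0)=0$, which alters neither properness nor the boundary irregularity). Feeding this $G$ into the construction (\ref{eqnhgtype3}) and invoking the preceding proposition, I immediately obtain that $H_G$ is a proper holomorphic map from $\mathbb{B}^n$ to $D_m^{III}$; here properness comes for free from the already-established identity (\ref{eqngi}), namely $\det(I_m-H_G\overline{H_G}^t)=1-\sum_{i=1}^m|g_i|^2$. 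Thus the only genuine content left is the boundary irregularity, and from it the failure of isometry.

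For the irregularity, I would exploit the explicit shape of $H_G$ in (\ref{eqnhgtype3}): its first row is $(g_1,\,g_2/\sqrt{2},\dots,g_m/\sqrt{2})$, so each $g_i$ occurs, up to a nonzero constant, as a component of $H_G$. Consequently, if $H_G$ admitted a $C^2$-smooth extension across some open piece $U\subset\partial\mathbb{B}^n$, then all of its components would extend $C^2$-smoothly across $U$, and in particular $G=(g_1,\dots,g_m)$ would, contradicting the choice of $G$. Hence $H_G$ has no $C^2$-smooth extension across any open piece of $\partial\mathbb{B}^n$.

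For the final assertion, I would argue by contraposition using the regularity of Bergman isometries: a holomorphic isometry from $\mathbb{B}^n$ into an irreducible bounded symmetric domain, with respect to the Bergman metrics, extends by Mok's algebraicity theorem to a proper algebraic map, which is real-analytic and in particular $C^2$ across an open dense subset of $\partial\mathbb{B}^n$. Since $H_G$ fails to be $C^2$ across every open boundary piece, it cannot be an isometry.

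I expect the construction and the irregularity step to be entirely routine given the preceding proposition; the one point deserving a sentence of justification is this last implication, namely that a Bergman isometry into $D_m^{III}$ must extend $C^2$-smoothly to an open boundary piece. This rests on the algebraicity of such isometries rather than on any new computation, so the main (and modest) obstacle is citing and applying the correct regularity statement, not overcoming a genuine analytic difficulty.
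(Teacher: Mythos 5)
Your proposal is correct and follows essentially the same route as the paper: it feeds a Dor-type proper map $G$ (normalized so that $G(0)=0$) into the construction (\ref{eqnhgtype3}), gets properness from the identity (\ref{eqngi}), transfers the boundary irregularity to $H_G$ through its first row $(g_1, g_2/\sqrt{2},\dots,g_m/\sqrt{2})$, and rules out isometry via the regularity (by Mok's algebraicity theorem) that an isometry would have to enjoy. The paper states these last steps only implicitly (it notes $G$ is not isometric and transfers this to $H_G$), so your explicit spelling out of the component argument and of the regularity of Bergman isometries is exactly the intended reasoning, just made precise.
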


Next when $n=n_{D^{III}_m}=m,$ we write $G(z)=z$ be the identity map from $\mathbb{B}^m$ to $\mathbb{B}^m.$ Let $R^{III}_m=H_G.$ Namely,
\begin{equation}
R^{III}_n=\left(
    \begin{array}{cccc}
      z_{1} & \frac{z_{2}}{\sqrt{2}} & ... & \frac{z_{m}}{\sqrt{2}} \\
      \frac{z_{2}}{\sqrt{2}} & f_{22} & ... & f_{2m} \\
      ... & ... & ... & ... \\
      \frac{z_{m}}{\sqrt{2}} & f_{m2} & ... & f_{mm} \\
    \end{array}
  \right),
\end{equation}
where $f_{ij}=f_{ji}=\frac{z_{i}z_{j}}{2(z_{1}-1)}, 2 \leq i,j \leq m.$ It is easy to verify that %for $2 \leq i,j, k,l \leq n,$
 $R^{III}_m$ is a holomorphic isometry from
$\mathbb{B}^m$ to $D^{III}_{m}$ by (\ref{eqngi}).

\subsection{Type IV domains}
The Type IV case was studied in \cite{XY}. Nevertheless we record them here for completeness.
The type IV domain is defined by 
$$D^{IV}_m = \{Z =(z_1, \cdots, z_m) \in \mathbb{C}^m | Z \overline{Z}^t <2 ~\text{and} ~ 1- Z \overline{Z}^t + \frac{1}{4} |Z Z^t|^2 >0 \}$$
 and the Bergman kernel is given by
$$K(Z, \bar Z) = c_{IV} \left( 1 -  Z\overline{Z}^t +\frac{1}{4} |Z Z^t|^2 \right)^{-m}$$
for some positive constant $c_{IV}$ depending on $m.$
Its boundary is given by 
$$ \{Z =(z_1, \cdots, z_m) \in \mathbb{C}^m | Z \overline{Z}^t \leq 2 ~\text{and} ~ 1- Z \overline{Z}^t + \frac{1}{4} |Z Z^t|^2 =0 \}.$$
\medskip

Write $z$ as the coordinates in $\mathbb{C}^n.$ Let $G$ be a proper holomorphic map from $\mathbb{B}^n$ to $\mathbb{B}^m, m \geq n,$ with $G(0)=0.$ Write $G=(g_1,...,g_m).$
We define two holomorphic maps $H_G, W_G$ from $\mathbb{B}^n$ to $\mathbb{C}^{m+1}$ associated to $G:$
\begin{equation}\label{eqnhgtype4}
H_G=(f_{1},...,f_{m-1},f_{m},f_{m+1}),
\end{equation}
where $f_i=g_i, 1 \leq i \leq m-1, f_{m}=\frac{P_{m}}{Q}, f_{m+1}=\frac{P_{m+1}}{Q},$
$$P_{m}=\frac{1}{2}\sum_{i=1}^{m-1}g_{i}^2-g_{m}^2+g_{m}, P_{m+1}=\sqrt{-1}\left(\frac{1}{2}\sum_{i=1}^{m-1}g_{i}^2+g_{m}^2-g_{m}\right), Q=\sqrt{2}(1-g_{m}),$$
and

\begin{equation}\label{eqnwtype4}
W_G=\left(g_1, \cdots, g_{m-1}, g_m, 1-\sqrt{1-\sum_{j=1}^m g_j^2}\right).
\end{equation}
By direct computation we have
\begin{equation}\label{eqnhwg}
1-H_G \overline{H_G}^t +\frac{1}{4}|H_G H_G^t|^2=1-W_G\overline{W_G}^t+ \frac{1}{4} |W_GW_G^t|^2=1-\sum_{i=1}^m |g_i|^2.
\end{equation}
Hence as before we conclude
\begin{proposition}
Let $m \geq n, H_G, W_G$ be as above. Then  $H_G, W_G$ are both proper holomorphic maps from $\mathbb{B}^n$ to $D_{m+1}^{IV}.$
\end{proposition}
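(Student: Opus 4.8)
The plan is to deduce the proposition from the already-established identity (\ref{eqnhwg}) together with a path-connectedness argument, exactly as in the type I, II and III cases treated above. Recall that $G=(g_1,\dots,g_m)$ maps $\mathbb{B}^n$ properly into $\mathbb{B}^m$, so $\sum_{i=1}^m |g_i|^2 <1$ on $\mathbb{B}^n$ while $\sum_{i=1}^m |g_i|^2 \to 1$ as $z\to\partial\mathbb{B}^n$.

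First I would verify that $H_G$ and $W_G$ are genuinely holomorphic on $\mathbb{B}^n$. For $H_G$ the only issue is the denominator $Q=\sqrt{2}(1-g_m)$, which is nonvanishing because $|g_m|<1$ forces $g_m\neq 1$. For $W_G$ the point is that the radicand $1-\sum_{j=1}^m g_j^2$ is a nonvanishing holomorphic function: by the triangle inequality $\big|\sum_{j=1}^m g_j^2\big| \le \sum_{j=1}^m |g_j|^2 <1$, so $\sum_{j=1}^m g_j^2\neq 1$ on $\mathbb{B}^n$. Since $\mathbb{B}^n$ is simply connected, a nonvanishing holomorphic function admits a holomorphic square root; choosing the branch equal to $+1$ at the origin makes $W_G$ a well-defined holomorphic map with $W_G(0)=0$.

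Next I would show both maps send $\mathbb{B}^n$ into $D^{IV}_{m+1}$. Writing $\rho(Z)=1-Z\overline{Z}^t+\tfrac{1}{4}|ZZ^t|^2$, identity (\ref{eqnhwg}) gives $\rho(H_G)=\rho(W_G)=1-\sum_{i=1}^m|g_i|^2>0$ throughout $\mathbb{B}^n$, which settles the second defining inequality. For the first inequality $Z\overline{Z}^t<2$, the key observation is that the two conditions interact through Cauchy--Schwarz: if some image point $Z$ satisfied $Z\overline{Z}^t=2$, then $|ZZ^t|=\big|\sum_i z_i^2\big|\le \sum_i|z_i|^2 = Z\overline{Z}^t=2$, whence $\rho(Z)=1-2+\tfrac{1}{4}|ZZ^t|^2\le 0$, contradicting $\rho>0$. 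Thus the continuous function $Z\mapsto Z\overline{Z}^t$ never equals $2$ along the image; since it equals $0$ at the origin (as $H_G(0)=W_G(0)=0$) and $\mathbb{B}^n$ is connected, it stays below $2$ everywhere. Hence both maps land in $D^{IV}_{m+1}$.

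Finally, properness follows from the boundary behaviour: as $z\to\partial\mathbb{B}^n$ one has $\rho(H_G(z))=\rho(W_G(z))=1-\sum_{i=1}^m|g_i|^2\to 0$, while the images remain inside the bounded domain $D^{IV}_{m+1}$, so every cluster point of the image over $\partial\mathbb{B}^n$ satisfies $\rho=0$ and hence lies in $\partial D^{IV}_{m+1}$, which is precisely properness. I do not expect a serious obstacle, since the substantive computation is already recorded in (\ref{eqnhwg}); the only genuinely delicate points are the well-definedness of the holomorphic branch of the square root in $W_G$ and the Cauchy--Schwarz reduction that lets the single inequality $\rho>0$ simultaneously control the condition $Z\overline{Z}^t<2$.
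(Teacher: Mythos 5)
Your proof is correct and follows essentially the same route as the paper: both rest on the key identity (\ref{eqnhwg}), the normalization $H_G(0)=W_G(0)=0$ together with a connectedness argument to place the image inside $D^{IV}_{m+1}$, and the boundary behavior $1-\sum_{i=1}^m|g_i(z)|^2 \to 0$ as $z\to\partial\mathbb{B}^n$ to get properness. The points you spell out explicitly---the holomorphic branch of the square root in $W_G$, the nonvanishing of $Q=\sqrt{2}(1-g_m)$, and the Cauchy--Schwarz step showing $\rho>0$ forces $Z\overline{Z}^t<2$---are exactly the details the paper compresses into ``as before,'' not a different method.
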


Again if $m \geq n+1,$ we can choose $G$ to be  a proper holomorphic map from $\mathbb{B}^n$ to $\mathbb{B}^m$ that does not have a $C^2-$smooth extension to any open piece of $\partial \mathbb{B}^n$(See \cite{Do}).  Then we have,

\begin{theorem}
Let $m \geq n+1.$ Then there is a proper holomorphic map from $\mathbb{B}^n$ to $D_{m+1}^{IV}$ that does  not extend $C^2-$smoothly to any open piece of $\partial \mathbb{B}^n$. In particular, it is not isometric. 
\end{theorem}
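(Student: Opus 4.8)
The plan is to use the explicit associated map $W_G$ (the map $H_G$ works in the same way) and to exploit the fact that its first $m$ coordinates reproduce $G$ verbatim. First I would invoke the cited existence result \cite{Do}: since $m \geq n+1$, there is a proper holomorphic map $G:\mathbb{B}^n \rightarrow \mathbb{B}^m$ that fails to extend $C^2$-smoothly across any open piece of $\partial\mathbb{B}^n$. After post-composing with an automorphism of $\mathbb{B}^m$ carrying $G(0)$ to the origin — such an automorphism is rational and smooth on $\overline{\mathbb{B}^m}$, hence a diffeomorphism near the relevant boundary and so neither creates nor destroys $C^2$ extendability — I may assume $G(0)=0$. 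Then the construction of the previous subsection applies and the associated map $W_G:\mathbb{B}^n \rightarrow D^{IV}_{m+1}$ is a proper holomorphic map by the preceding proposition.

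Next I would show that $W_G$ itself admits no $C^2$ extension to any open boundary piece. The key point is simply that
$$W_G = \left(g_1, \cdots, g_{m-1}, g_m, \, 1 - \sqrt{1 - \sum_{j=1}^m g_j^2}\right),$$
so its first $m$ components are exactly the components $g_1, \cdots, g_m$ of $G$. If $W_G$ extended $C^2$-smoothly to some open piece $U \subset \partial\mathbb{B}^n$, then each of its scalar components would extend $C^2$-smoothly to $U$; in particular $g_1, \cdots, g_m$ would, and hence $G=(g_1,\cdots,g_m)$ would extend $C^2$-smoothly across $U$, contradicting the choice of $G$. (If one prefers $H_G$, the same conclusion follows since the computation $f_m - \sqrt{-1}^{-1} f_{m+1} = \sqrt{2}\,g_m$ recovers $g_m$ as a linear combination $\tfrac{1}{\sqrt{2}}(f_m + \sqrt{-1}\,f_{m+1})$ of the last two components of $H_G$.) This step is entirely elementary: regularity of a vector-valued map transfers to each of its scalar components.

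Finally, to conclude that the map is not isometric, I would appeal to Mok's algebraicity theorem: any holomorphic isometry between bounded symmetric domains extends to an algebraic map. An algebraic proper map $\mathbb{B}^n \rightarrow D^{IV}_{m+1}$ is real-analytic across $\partial\mathbb{B}^n$ away from the branch and pole locus of its algebraic extension, and this locus meets $\partial\mathbb{B}^n$ in a proper real-analytic subset; hence any isometry extends $C^2$-smoothly (indeed real-analytically) across some open piece of the boundary. Since $W_G$ fails $C^2$ extension on \emph{every} open piece, it cannot be an isometry.

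The construction and the transfer-of-regularity step are routine; the only point requiring genuine care is the last paragraph, where one must guarantee that a holomorphic isometry really does extend $C^2$-smoothly across at least some open piece of $\partial\mathbb{B}^n$. I expect this to be the main (albeit mild) obstacle, and it is exactly what underlies the identical ``in particular, it is not isometric'' conclusions in the Type I, II, and III theorems above. It is settled by Mok's algebraic extension theorem together with the observation that the branch and pole loci of an algebraic map form a proper subvariety, so generic boundary points are points of real-analytic extension.
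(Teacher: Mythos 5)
Your proposal is correct and follows essentially the same route as the paper: take Dor's proper map $G$ with no $C^2$ extension across any boundary piece, form the associated proper map $W_G$ (or $H_G$) into $D^{IV}_{m+1}$ via the preceding proposition, note that boundary regularity of $W_G$ would transfer back to $G$ since the components of $G$ are recoverable from those of $W_G$, and exclude isometry because holomorphic isometries extend across generic boundary points by Mok's algebraic extension theorem. The paper leaves the normalization $G(0)=0$, the transfer-of-regularity step, and the isometry-implies-extension argument implicit, and these are precisely the routine details you supply.
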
 

Now when $n=n_{D^{IV}_{m+1}}=m,$ write $z=(z_{1},...,z_{m})$ be the coordinates in $\mathbb{C}^m$ for $m  \geq 2.$ Let $G(z)=z$ be the identity map from $\mathbb{B}^m$ to $\mathbb{B}^m.$ Then $H_G,W_G$ are reduced to $R^{IV}_m, I^{IV}_m: \mathbb{B}^m \rightarrow D^{IV}_{m+1}.$ Here
\begin{equation}\label{stad}
R^{IV}_m=(r_{1},...,r_{m-1},r_{m},r_{m+1}),
\end{equation}
where $r_{i}=z_{i}, 1 \leq i \leq m-1, r_{m}=\frac{p_{m}}{q}, r_{m+1}=\frac{p_{m+1}}{q},$
$$p_{m}=\frac{1}{2}\sum_{i=1}^{m-1}z_{i}^2-z_{m}^2+z_{m}, ~~p_{m+1}=\sqrt{-1}\left(\frac{1}{2}\sum_{i=1}^{m-1}z_{i}^2+z_{m}^2-z_{m}\right),~~ q=\sqrt{2}(1-z_{m}).$$

\begin{equation}\label{eqnstad}
I^{IV}_{m}=\left(z_1, \cdots, z_{m-1}, z_m, 1-\sqrt{1-\sum_{j=1}^m z_j^2}\right).
\end{equation}
It is easy to see from equation (\ref{eqnhwg}) that $R^{IV}_{m}, I^{IV}_{m}$ are both holomorphic isometries. We showed in \cite{XY} that $R^{IV}_m, I^{IV}_{m}$ are the only holomorphic isometries from $\mathbb{B}^{m}$ to $D^{IV}_{m+1}$ up to holomorphic automorphisms. We also proved in \cite{XY} that they are the only two proper holomorphic maps from $\mathbb{B}^m$ to $D_{m+1}^{IV}$
satisfying certain boundary regularity when $m \geq 4.$

\subsection{Singularities of holomorphic isometries}
The rational holomorphic isometries given in previous sections from the unit ball $\mathbb{B}^{n_\Omega}$ into an irreducible classic symmetric domain $\Omega$ are not totally geodesic and only produce singularities at one single point on the boundary $\partial\mathbb{B}^{n_\Omega}$. When $n_\Omega \geq 2$, one can easily avoid passing through this point by slicing $\mathbb{B}^{n_\Omega}$ with a complex hyperplane. Therefore, one obtains holomorphic polynomial isometries from $\mathbb{B}^{n_\Omega-1}$ into $\Omega$. In particular, this answers the question raised by Mok in \cite{M4} (Question 5.2.2) in the negative while the positive answer may still be possible from $\mathbb{B}^{n_\Omega}$ to $\Omega$. Note that these examples are discovered independently by Chan-Mok \cite{CM}.

\begin{theorem}\label{tian}
There exist non-totally geodesic
 holomorphic isometries from the unit ball $\mathbb{B}^{m}$ to the four types of  irreducible bounded symmetric domain that extends holomorphically to $\mathbb{C}^{m}$.
\end{theorem}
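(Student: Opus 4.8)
The plan is to reuse the explicit rational isometries $R^{I}_{p,q}$, $R^{II}_m$, $R^{III}_m$ and $R^{IV}_m$ constructed in Sections 6.1--6.4, each of which is a non-totally geodesic holomorphic isometry from $\mathbb{B}^{n_\Omega}$ into the corresponding domain $\Omega$, and to remove their boundary singularities by slicing with a suitable coordinate hyperplane. The crucial observation is that each of these maps is rational with a single denominator that is an affine function of one coordinate: it equals $z_1-1$ for $R^I_{p,q}$ and $R^{III}_m$, $z_2-1$ for $R^{II}_m$, and $\sqrt{2}(1-z_m)$ for $R^{IV}_m$. In each case the zero locus of this denominator meets the closed ball $\overline{\mathbb{B}^{n_\Omega}}$ only at the single boundary point where the distinguished coordinate equals $1$ and all others vanish.

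First I would restrict each map to the coordinate hyperplane on which the distinguished variable is set to $0$ (namely $\{z_1=0\}$, $\{z_2=0\}$, $\{z_1=0\}$ and $\{z_m=0\}$ respectively). On such a hyperplane the denominator becomes the nonzero constant $-1$ (resp. $\sqrt{2}$), so every component of the map reduces to a polynomial; for instance the entries $f_{ij}=z_iz_j/(2(z_1-1))$ of $R^{III}_m$ become $-z_iz_j/2$, the entries $f_{ij}=w_iz_j/(z_1-1)$ of $R^I_{p,q}$ become $-w_iz_j$, those of $R^{II}_m$ become $-(z_iw_j-z_jw_i)$, and $R^{IV}_m$ acquires the quadratic component $\tfrac{1}{2\sqrt{2}}\sum_{i=1}^{m-1}z_i^2$. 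Consequently the restricted map is a holomorphic polynomial map, hence extends holomorphically to all of $\mathbb{C}^{n_\Omega-1}$.

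Next I would check that the restriction is still an isometry. The slice $\{z_k=0\}\cap\mathbb{B}^{n_\Omega}$ is a complex-linear, totally geodesic submanifold of $\mathbb{B}^{n_\Omega}$ biholomorphic to $\mathbb{B}^{n_\Omega-1}$, and the inclusion $\iota\colon\mathbb{B}^{n_\Omega-1}\hookrightarrow\mathbb{B}^{n_\Omega}$ pulls back the Bergman metric $\omega_{\mathbb{B}^{n_\Omega}}$ to $\tfrac{n_\Omega+1}{n_\Omega}\,\omega_{\mathbb{B}^{n_\Omega-1}}$, since $\log K_{\mathbb{B}^{n}}$ differs from $\log K_{\mathbb{B}^{n-1}}$ only in the constant factor in front of $-\log(1-|z|^2)$. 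Hence if $F$ denotes one of the $R$-maps with $F^{*}\omega_\Omega=\lambda\,\omega_{\mathbb{B}^{n_\Omega}}$, then $(F\circ\iota)^{*}\omega_\Omega=\lambda\tfrac{n_\Omega+1}{n_\Omega}\,\omega_{\mathbb{B}^{n_\Omega-1}}$, so $F\circ\iota$ is again a holomorphic isometry in the sense of the paper (a positive multiple of the Bergman metric is preserved).

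Finally, the restricted maps remain non-totally geodesic: each retains a genuinely quadratic part (the surviving terms listed above), so it is not a linear embedding, while a totally geodesic holomorphic isometry with $F(0)=0$ is linear in Harish-Chandra coordinates and thus has vanishing second fundamental form at the origin, equivalently vanishing quadratic part. Since the second fundamental form is an isometric invariant, the nonvanishing of the quadratic part forces the slice to be non-totally geodesic. I expect the main technical point to be precisely this last step---verifying that restricting to the chosen coordinate hyperplane does not annihilate the quadratic part of the map---together with the bookkeeping of the isometric constant; once the explicit surviving quadratic terms are exhibited for each type, the conclusion follows. Choosing, for example, $p,q\geq 2$ for type I and $m$ large for types II--IV guarantees $n_\Omega-1\geq 1$ and produces the desired polynomial non-totally geodesic isometries into all four classical types.
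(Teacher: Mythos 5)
Your proposal is correct and is essentially the paper's own argument: the paper proves Theorem \ref{tian} by exactly this slicing of $R^{I}_{p,q}$, $R^{II}_m$, $R^{III}_m$, $R^{IV}_m$ along the coordinate hyperplanes $\{z_1=0\}$, $\{z_2=0\}$, $\{z_1=0\}$, $\{z_m=0\}$, which kills the single affine denominator and yields the same polynomial isometries you list (up to harmless sign conventions). Your additional bookkeeping of the isometric constant $\tfrac{n_\Omega+1}{n_\Omega}$ and the second-fundamental-form justification of non-total-geodesy only makes explicit what the paper leaves implicit.
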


\begin{proof}
Holomorphic isometries $F$ are given by the following maps for different targets.
\begin{equation}\notag
\begin{split}F(z_2, \cdots, z_q, w_2, \cdots, w_p) =
\left(
    \begin{array}{cccc}
      0 & z_{2} & ... & z_{q} \\
      w_{2} & -w_2 z_2 & ... & -w_2 z_q \\
      ... & ... & ... & ... \\
      w_{p} & -w_p z_2 & ... & -w_p z_q \\
    \end{array}\right) &: \mathbb{B}^{p+q-2} \rightarrow D^{I}_{p, q} ~{\rm for}~q\geq p\geq 2; \\
    \begin{split}    
   & F(z_3, \cdots, z_n, w_3, \cdots, w_n) = \\
& \left( \begin{array}{cccccc}
      0 & 0 & z_{3} & z_{4} & ... & z_{n} \\
      0 & 0 & w_{3} & w_{4} & ... & w_{n} \\
      -z_{3} & -w_{3} & 0 & w_3 z_4 - z_3 w_4 & ... & z_n w_3 - z_3 w_n \\
      ... & ... & ... & ... & ... & ... \\
      -z_{n-1} & ... & ... & ... & 0 & z_n w_{n-1}- z_{n-1} w_n \\
      -z_{n} & -w_{n} & ... & ... & ... & 0 \\
    \end{array}\right)
    \end{split} 
    &: \mathbb{B}^{2n-4} \rightarrow D^{II}_{n} ~{\rm for}~n\geq 4;\\
    F(z_2, \cdots, z_n)=
 \left(
    \begin{array}{cccc}
      0 & \frac{z_{2}}{\sqrt{2}} & ... & \frac{z_{n}}{\sqrt{2}} \\
      \frac{z_{2}}{\sqrt{2}} & - \frac{z^2_{2}}{2}  & ... & -\frac{z_2 z_n}{2} \\
      ... & ... & ... & ... \\
      \frac{z_{n}}{\sqrt{2}} & -\frac{z_2 z_n}{2} & ... & - \frac{z^2_{n}}{2} \\
    \end{array}
  \right) &: \mathbb{B}^{n-1} \rightarrow D^{III}_n ~{\rm for}~ n\geq 2; \\
  F(z_1, \cdots, z_{n-1})=
  \left(z_1, \cdots, z_{n-1}, -\frac{\sqrt{2}}{4}\sum_{i=1}^{n-1}z_{i}^2, \frac{\sqrt{-2}}{4}\sum_{i=1}^{n-1}z_{i}^2\right) &: \mathbb{B}^{n-1} \rightarrow D^{IV}_{n+1}  ~{\rm for}~ n\geq 2.
\end{split}
\end{equation}
Indeed, these are polynomial holomorphic isometries. 
\end{proof}

%%%%%%%%%%%%%%%%%%%%%%%%%%%%%%%%%%%%%%%%%%%%%%%%%%%%%%%%%%%%%%%%

\end{document}